\documentclass{amsart}\def\distribute{}
\ifx\distribute\undefined
\date{}
\else
\date{July 09, 2015}
\fi
\usepackage{graphicx} 
\usepackage{amssymb}
\usepackage{verbatim,enumerate}
\usepackage[dvipsnames]{xcolor}
\title[
    An index formula for a bundle homomorphism 
]{%
    An index formula for a bundle homomorphism of 
    the tangent bundle into a vector bundle of the same rank,\\
    and its applications
}
\author[K. Saji]{Kentaro Saji}
\address[Saji]{%
  Department of Mathematics,
  Faculty of Science,
  Kobe University,
  Rokko, Kobe 657-8501}
\email{saji@math.kobe-u.ac.jp}

\author[M. Umehara]{Masaaki Umehara}
\address[Umehara]{%
  Department of Mathematical and Computing Science,
  Tokyo Institute of Technology,
  O-okayama Meguro-ku,
  Tokyo 152-8552, Japan
}
\email{umehara@is.titech.ac.jp}

\author[K. Yamada]{Kotaro Yamada}
\address[Yamada]{%
  Department of Mathematics,
  Tokyo Institute of Technology,
  O-okayama, Meguro-ku, Tokyo 152-8551, Japan%
}
\email{kotaro@math.titech.ac.jp}
\subjclass[2000]{%
 Primary 57R45;   
 Secondary 53A05. 
}
\keywords{%
     wave front, 
     coherent tangent bundle, 
     Morin singularity, 
     index formula%
}
\ifx\distribute\undefined
\relax
\else
\thanks{%
  The first author was
  partly supported by Grant-in-Aid for Scientific Research
  (C) No.\ 26400087
  from the Japan Society for the Promotion of Science,
  the second author by (A) No.\ 26247005 and
  the third author by (C) No.\ 26400006 from the Japan
  Society for the Promotion of Science.
}
\fi
\usepackage{amsthm}
\theoremstyle{plain}
 \newtheorem{theorem}{Theorem}[section]
 \newtheorem{proposition}[theorem]{Proposition}
 \newtheorem{prop}[theorem]{Proposition}
 
 \newtheorem{lemma}[theorem]{Lemma}
 \newtheorem{corollary}[theorem]{Corollary}
 
\theoremstyle{definition}
 \newtheorem{definition}[theorem]{Definition}
\theoremstyle{remark}
 \newtheorem{remark}[theorem]{Remark}
 \newtheorem*{remark*}{Remark}
 \newtheorem{example}[theorem]{Example}
 \newtheorem*{acknowledgements}{Acknowledgements}
\numberwithin{equation}{section}
\renewcommand{\theenumi}{{\rm(\arabic{enumi})}}
\renewcommand{\labelenumi}{\theenumi}

\newcommand{\vect}[1]{\boldsymbol{#1}}
\newcommand{\R}{\boldsymbol{R}}

\newcommand{\Z}{\boldsymbol{Z}}
\renewcommand{\phi}{\varphi}
\newcommand{\sgn}{\operatorname{sgn}}
\newcommand{\ind}{\operatorname{ind}}
\newcommand{\grad}{\operatorname{grad}}
\newcommand{\inner}[2]{\left\langle{#1},{#2}\right\rangle}
\newcommand{\Ker}{\operatorname{Ker}}

\newcommand{\E}{\mathcal{E}}
\newcommand{\A}{A}

\newcommand{\n}{\mathcal{N}}
\newcommand{\id}{\operatorname{id}}

\newcommand{\mb}[1]{{\mathbf #1}}
\newcommand{\norm}{\vect{n}}
\newcommand{\pmt}[1]{{\begin{pmatrix} #1  \end{pmatrix}}}
\renewcommand{\AA}{\mathfrak{A}}
\newcommand{\X}{\mathfrak{X}}

\newcommand{\J}{\mathcal{J}}
\newcommand{\N}{\mathcal{N}}
\newcommand{\T}{\mathcal{T}}
\begin{document}
\begin{abstract}
 In a previous work, the authors introduced
 the notion of \lq coherent tangent bundle\rq, 
 which is useful for giving a treatment of singularities of
 smooth maps without ambient spaces.
 Two different types of Gauss-Bonnet formulas on coherent 
 tangent bundles on $2$-dimensional manifolds were proven, 
 and several applications to surface theory were given. 

 Let $M^n$ ($n\ge 2$) be an oriented compact $n$-manifold 
 without boundary and $TM^n$ its tangent bundle.
 Let $\E$ be a vector bundle of rank $n$ over $M^n$, and 
 $\phi:TM^n\to \E$ an oriented  vector bundle homomorphism. 
 In this paper, we show that one of these two Gauss-Bonnet 
 formulas can be generalized to an index formula for the 
 bundle homomorphism $\phi$ under the assumption 
 that $\phi$ admits only certain kinds of generic 
 singularities.

 We shall give several applications to hypersurface theory.
 Moreover, as an application for intrinsic geometry,
 we also give a characterization of the
 class of positive semi-definite metrics
 (called Kossowski metrics)
 which can be realized as the induced metrics of
 the coherent tangent bundles.
\end{abstract}
\maketitle
\ifx\distribute\undefined
\footnotetext{%
  The first author was
  partly supported by Grant-in-Aid for Scientific Research
  (C) No.\ 26400087
  from the Japan Society for the Promotion of Science,
  the second author by (A) No.\ 26247005 and
  the third author by (C) No.\ 26400006 from the Japan
  Society for the Promotion of Science.
}%
\fi
\section{Introduction}
Let $M^{n}$ be an oriented closed $n$-manifold
and $(\E,\inner{~}{~},D)$ an oriented vector bundle 
of rank $n$ having inner product $\inner{~}{~}$ and 
a metric connection $D$, that is 
\[
  X\inner{\xi_1}{\xi_2}
        =\inner{D_X\xi_1}{\xi_2}+\inner{\xi_1}{D_X\xi_2}
\]
holds, where $\xi_{i}$ ($i=1,2$) are sections of $M^n$ into 
$\E$ and $X$ is a vector field of $M^n$.
A bundle homomorphism
\[
  \phi:TM^n\to (\E,\inner{~}{~},D)
\]
is called a {\it coherent tangent bundle\/} if it satisfies
\begin{equation}\label{eq:torsion}
   D_X\phi(Y)-D_Y\phi(X)=\phi([X,Y])
\end{equation}
for any two vector fields $X,Y$ on $M^n$.
When $n=2$, the authors proved in \cite{SUY0} and \cite{SUY1} 
that the two different types of Gauss-Bonnet formulas
($\chi_{\E}^{}$ is the Euler characteristic of the
 oriented vector bundle $\E$)
\begin{align}
  (\chi_{\E}^{}=)&\frac1{2\pi}\int_{M^2}K\, d\hat A_\phi=
        \chi(M^2_+)-\chi(M^2_-)
              +S^+_\phi-S^-_\phi, 
  \label{eq:B}\\
  2\pi\chi(M^2)&=
  \int_{M^2}K\, dA_\phi+2\int_{\Sigma_\phi} \kappa_\phi\, d\tau_\phi, 
  \label{eq:A}
\end{align}%
under the assumption that the singular set of $\phi$ 
consists of $\A_2$-points  and  $\A_3$-points, where
$K$ is the Gaussian curvature of the induced metric 
$ds^2_{\varphi}=\phi^*\inner{~}{~}$,
the two subsets $M^2_\pm$ are defined in \eqref{eq:pm}, 
$d\tau_\phi$ is the length element on the  $\phi$-singular 
set with respect to $ds^2_\phi$,
and $S^\pm_\phi$ are the numbers of positive and negative 
$\A_3$-points of $\phi$, respectively.  
If $f:M^2\to \R^3$ 
is a wave front, and $(\phi:=)df:TM^2\to \E$
is the bundle homomorphism induced by $f$,
then $\A_2$-points (resp.\ $\A_3$-points)
correspond to cuspidal edges (resp.\ swallowtails).
The precise definition of $\A_2$ or $\A_3$-points are
given in Section \ref{sec:prelim}.
The authors gave several applications of this formula
in \cite{SUY5} and \cite{SUY6} for surfaces in $\R^3$.

We remark that the second formula \eqref{eq:A} depends 
on the metric connection $D$,
but the first formula \eqref{eq:B} does not need 
information about the inner product.
So it is natural to expect that one can
extend the formula \eqref{eq:B}
to higher dimensional cases. The purpose of this paper is
to accomplish this for even dimensional manifolds without assuming 
condition \eqref{eq:torsion} as follows:
let $\phi:TM^n\to \E$ be a homomorphism between the tangent bundle 
$TM^n$ and an oriented vector bundle $\E$ of rank $n$ on $M^n$. 
Suppose that $\phi$ admits only $\A_k$-singular points 
(the definition of $A_k$-singular points ($k=2,\dots,n$) 
is given in Section~\ref{sec:prelim}).
We denote by $\AA_k$ ($k=2,\dots,n$)
the set of $A_k$-singular points.
When $k$ is odd, we can define the positivity and
negativity of $A_k$-points (see Section \ref{sec:char}). 
We denote by $\AA_k^+$ (resp.\ by $\AA_k^-$) the set of
positive (resp.\ negative) $A_k$-singular points.
When $n=2m$ is an even number, 
the Euler characteristic $\chi^{}_\E$ of the vector
bundle $\E$ satisfies the following formula
\begin{equation}\label{eq:main}
   \chi^{}_\E =\chi(M^n_+)-\chi(M^n_-)+
   \sum_{j=1}^m \biggl( \chi(\AA_{2j+1}^+)
                       -\chi(\AA_{2j+1}^-)\biggr),
\end{equation}
where $\chi(M^n_+)$ (resp.\ $\chi(M^n_-)$) is the 
Euler characteristic of the subset $M^n_+$ (resp.\ $M^n_-$) of 
$M^n$ at which the co-orientation induced by $\phi$ is 
(resp.\ is not) compatible with the orientation of $TM^n$ 
(cf.\ \eqref{eq:pm}),
the number $\chi(\AA^+_{2j+1})$ (resp.\ $\chi(\AA^-_{2j+1})$)
is the Euler characteristic of $\AA^+_{2j+1}$
(resp.\ $\AA^-_{2j+1}$).
In particular, $\chi(\AA^+_{2m+1})$ (resp.\ $\chi(\AA^-_{2m+1})$)
is equal to the number $\#\AA^+_{2m+1}$ (resp.\ $\#\AA^-_{2m+1}$)
of positive (resp.\ negative)
$A_{2m+1}$-points (cf.\ Definition~\ref{fact:ojm}).
For example, the formulas for $n=2$, $4$ are given by
\begin{align}
\label{eq:i2}
     \chi^{}_\E&=\chi(M^{2}_+)-\chi(M^{2}_-)+\#\AA^+_3-\#\AA^-_3, 
 \\
\label{eq:i4}     
     \chi^{}_\E&=\chi(M^{4}_+)-\chi(M^{4}_-)+
                 \chi(\AA^+_3)-\chi(\AA^-_3)+\#\AA^+_5-\#\AA^-_5.
\end{align}
Formula \eqref{eq:i2} is a generalization of \eqref{eq:A}.
As pointed out by Saeki and Sakuma in \cite{SS},
any closed orientable $4$-manifold with
vanishing signature admits $C^\infty$-maps
into $\R^4$ having only fold or cusp singularities.
The $\Z_2$-version of our formula \eqref{eq:main} 
was given by Levine \cite{L2} (see \cite[Remark 3.12]{Sa}).
If we set $\phi$ to be the derivative of a Morin map 
$f:M^n\to N^n$, then we get \eqref{eq:Quine},
which is proved by Nakai \cite{N} and 
Dutertre-Fukui \cite{DF}.
Index formulas in $\Z_2$-coefficients 
for globally defined Morin maps  $f:M^n\to N^p$ ($n\ge p$)
are given by Fukuda \cite{Fu} and Saeki \cite{Sa},
and formula \eqref{eq:main} is a generalization of them. 
Our proof is independent of those in \cite{N} and \cite{DF}. 
More precisely, we apply the Poincar\'e-Hopf index formula for 
sections of oriented vector bundles.
(In \cite{N} and \cite{DF}, Viro's integral calculus 
 \cite{V} is applied.)
Our index formula does not rely on ambient spaces,
and we can give applications even for a case without
ambient space (cf.\ Section \ref{sec:Kossowski}). 

In fact, one of the important applications of 
\eqref{eq:main} is for a certain class of
positive semi-definite metrics.
We define a class of positive semi-definite
metrics on manifolds called \lq Kossowski metrics\rq\, 
which was originally defined by Kossowski \cite{K}.
The induced metrics of wave fronts in $\R^{n+1}$
which admit at most $A_{k+1}$-singularities
($k=1,\dots,n$) are all Kossowski metrics.
Conversely, Kossowski \cite{K} showed that germs of
real analytic generic Kossowski metrics on
$2$-manifolds can be realized as the first fundamental forms
of wave fronts in $\R^3$. 

Let $(\E,\phi,\inner{~}{~},D)$ be a coherent tangent bundle 
over an $n$-manifold $M^n$ then
the pull back of $\inner{~}{~}$ by $\phi$ gives a Kossowski 
metric on $M^n$ whenever $\phi$ admits at most 
non-degenerate singular points 
(Proposition~\ref{prop:frontal22}).
The converse assertion for $n=2$
was proved in  \cite{HHNSUY}.
In this paper, we generalize this for $n\ge 3$,
namely, we show that each Kossowski metric 
$ds^2$ induces a coherent
tangent bundle $(\E,\phi,\inner{~}{~},D)$
such that $ds^2=\phi^*\inner{~}{~}$
and the pull-back of the connection $D$
by $\phi$ coincides with the Levi-Civita connection
on the regular set of $\phi$ (cf.\ Theorem~\ref{thm:kossowski}).
We then get  
an index formula (cf.\ Corollary~\ref{cor:morin-metric})
for Kossowski metrics
on compact manifolds
admitting at most $A_{k+1}$ singularities
($k=1,\dots,n$).

To give other applications of 
formula \eqref{eq:main}, the case of 
$\E=TM^n$ is important.
An arbitrarily given bundle automorphism
$\phi:TM^n\to TM^n$ can be identified with
the set of $(1,1)$-tensors on $M^n$ ($n=2m$),
and \eqref{eq:main} reduces to the
following identity:  
\begin{equation}\label{eq:BW}
   2\chi(M^n_-)=
    \sum_{j=1}^m 
    \biggl(
      \chi(\AA_{2j+1}^+)-\chi(\AA_{2j+1}^-)
    \biggr).
\end{equation}
In Section~\ref{sec:appl}, we give several applications of
\eqref{eq:main} and \eqref{eq:BW} for 
geometry of hypersurfaces.

The paper is organized as follows:
in Section~\ref{sec:prelim}, 
we give a precise definition of
$\A_k$-singularities.
In Section~\ref{sec:char}, the well-definedness
of the positivity and negativity of odd order
$\A_{2k+1}$-singular points is shown. 
Moreover, we define 
characteristic vector fields with
respect  to the homomorphism
$\phi:TM^n\to \E$ and show the existence of 
such a vector field $X$ defined on $M^n$.
It is well-known that the sum of all 
indices of zeros of a generic 
section $Y$ of $\E$ is equal to
the Euler characteristic $\chi^{}_\E$ of the oriented vector bundle $\E$.
Since the section $Y:=\phi(X)$ of $\E$ 
has finitely many zeros,
it holds that
\begin{equation}\label{eq:Y}
 \chi^{}_\E
  =\sum_{p\in M^n\setminus \Sigma^{n-1}} \ind_p(Y)
  +\sum_{p\in \AA_2} \ind_p(Y)
  +\dots 
  +\sum_{p\in \AA_{n+1}} \ind_p(Y),
\end{equation}
where 
\[
   \Sigma^{n-1}:=\AA_2\cup \dots \cup  \AA_{n+1}
\]
is the singular set of $\phi$.
Using this, we prove \eqref{eq:main}
in Sections \ref{sec:even2} and \ref{sec:even4}.
In Section~\ref{sec:appl}, we prove Theorems \ref{thm:A}
and \ref{thm:B}.
Several other applications are 
given in Section~\ref{sec:appl} and Section \ref{sec:Kossowski}.

\section{Preliminaries}\label{sec:prelim}
Let $M^n$ be an oriented $n$-manifold
and $\phi:TM^n\to \E$ a bundle homomorphism
between the tangent bundle $TM^{n}$ and 
a vector bundle $\E$ of rank $n$.
Then a point $p\in M^n$ is called 
a {\it singular point\/}
if the linear map $\phi_p:T_pM^n\to \E_p$ 
has a non-trivial kernel, 
where $\E_p$ is the fiber of $\E$ at $p$.
Since $M^n$ is oriented, we can take
a non-vanishing $n$-form $\Omega$ defined on $M^n$
which is compatible with the orientation of $M^n$.
We call $\Omega$ an {\it orientation\/} of $M^n$.

On the other hand, 
$\E$ is locally oriented, that is,
there is a non-vanishing section $\mu$
of the determinant line bundle of the dual 
bundle $\E^*$ of $\E$
defined on a neighborhood $U(\subset M^n)$ of 
a given point $p\in M^n$.
We call $\mu$ a {\it local orientation} of $\E$.

Then there is a (unique) $C^\infty$-function 
$\lambda:U\to \R$ such that
\begin{equation}\label{eq:lambda0}
    \phi^*\mu=\lambda\, \Omega,
\end{equation}
on $U$,
where $\phi^*\mu$ is the pull-back of $\mu$ by $\phi$.
A point $q\in U$ is a singular point if and only if 
$\lambda(q)=0$. A singular point $q\in M^n$ 
is called {\it non-degenerate\/}
if the exterior derivative $d\lambda$ does not
vanish at $q$.
The bundle homomorphism 
$\phi$ is called {\it non-degenerate\/}
if all the singular points are non-degenerate.
If $\varphi$ is non-degenerate, 
the singular set
\[
     \Sigma^{n-1}:=\{q\in M^n\,;\, \Ker(\phi_q)\ne \{0\}\}
\]
is an embedded hypersurface of $M^n$, where
$\Ker(\phi_q)$ is the kernel of the
linear mapping $\phi_q:T_qM^{n}\to \E_{\phi(q)}$. 

\begin{definition}\label{def:phi-function}
 Let $U$ be an open subset of $M^n$.
 A function $h:U\to \R$ is called a
 {\it $\phi$-function\/} if 
 there exists a  $C^\infty$-function 
 $\sigma:U\to \R\setminus\{0\}$
 such that 
 \begin{equation}\label{eq:phi-function}
   h=\sigma \lambda
 \end{equation}
 on $U$, where $\lambda$ is the 
 function as in \eqref{eq:lambda0}.
\end{definition}

Of course, $\lambda$ itself is a $\phi$-function.
However, $\lambda$ depends on the choice of 
$\Omega$ and $\mu$, and this ambiguity is just 
corresponding to the choice of $\phi$-functions.
In the following discussion, we may replace $\lambda$
by an arbitrarily fixed $\phi$-function.

Suppose that $\phi$ is non-degenerate.
Then the kernel of $\phi$ 
at each singular point $p\in \Sigma^{n-1}$ is of dimension $1$.
In particular, 
there exists a smooth vector field
$\tilde \eta$ defined on a sufficiently small 
neighborhood $U(\subset M^n)$ of $p$
such that the restriction 
\[
    \eta:=\tilde \eta|_{U\cap \Sigma^{n-1}}
\]
has the property that $\eta_q$ is
the generator of the kernel of $\phi_q$ 
for each $q\in U\cap \Sigma^{n-1}$.
We call $\eta$ a {\it null vector field\/} and
$\tilde \eta$ an {\it extended null vector field\/}
(cf.\ \cite[p.\ 733]{SUY3}).
For a given
extended null vector field $\tilde \eta$,
we often denote by $\eta$
the restriction of $\tilde \eta$
to $\Sigma^{n-1}$.
We set 
\begin{equation}\label{eq:lambda-der-1}
   \tilde\eta\lambda :=
   d\lambda(\tilde\eta),\qquad
   \tilde\eta\tilde\eta\lambda (= \tilde\eta^{2}\lambda):=
   d\bigl(\tilde\eta\lambda\bigr)(\tilde\eta),
\end{equation}
and 
\begin{equation}\label{eq:lambda-der}
    \tilde\eta^{k+1}\lambda:=d\bigl(\tilde\eta^{k}
\lambda\bigr)(\tilde \eta)
    \qquad (k=0,1,2,\dots),
\end{equation}
inductively.
As a convention, we set $\tilde\eta^{0}\lambda:=\lambda$.

\begin{definition}\label{fact:ojm}
 Let $\varphi\colon{}TM^n\to\E$ be a non-degenerate 
 bundle homomorphism
 and $\Sigma^{n-1}$ its singular set.
 A point $p\in \Sigma^{n-1}$ is an $\A_{k+1}$-point
 $(1\leq k\leq n)$ if 
 \begin{enumerate}
  \item\label{item:ojm:1}
       $\lambda(p)=\tilde\eta\lambda(p)=\dots =\tilde\eta^{k-1}\lambda(p)=0$,
       $\tilde\eta^{k}\lambda(p)\ne 0$,
  \item\label{item:ojm:2} 
       and the Jacobi matrix of the $\R^k$-valued
       $C^\infty$-function 
       \[
	\Lambda:=(\lambda,\tilde\eta\lambda,\dots,\tilde\eta^{k-1}\lambda)
       \]
       is of rank $k$ at $p$.
 \end{enumerate}
We denote by $\AA_{k+1}$ the set 
of $A_{k+1}$-points on $M^n$.
\end{definition}

Suppose that
$\varphi\colon{}TM^n\to\E$ is a non-degenerate 
bundle homomorphism.
If
$k=1$ (namely, for $\A_2$-points), then
$d\Lambda=d\lambda$ and 
the condition \ref{item:ojm:2} of Definition~\ref{fact:ojm} is
automatically satisfied.
Moreover, if $k=2$, the condition \ref{item:ojm:2} also
follows from \ref{item:ojm:1}. In fact,
the two differential forms
$d\lambda$ and $d(\tilde \eta\lambda)$ 
are linearly independent
at $p$,  since $d\lambda(p)\ne 0$,
$\tilde \eta \lambda(p)=0$ 
and $\tilde \eta^2\lambda(p)\ne 0$. 
In other words, the second condition of 
Definition~\ref{fact:ojm}
comes into effect only for $k\ge 3$ 
if $\phi$ is non-degenerate.

\medskip
Let $\phi:TM^n\to \E$ be a bundle homomorphism.
Suppose that $\phi:TM^n\to \E$ is non-degenerate
and the singular set $\Sigma^{n-1}$ is non-empty. 
Then the map
\begin{equation}\label{eq:quotient}
\hat \phi:T\Sigma^{n-1}\to \hat \E:=
\phi(TM^n|_{\Sigma^{n-1}})
\end{equation}
is induced.

\begin{proposition}
\label{prop:def}
 In this situation, $\hat \E$ is a vector bundle
 of rank $n-1$ on $\Sigma^{n-1}$,
 and $\hat \phi:T\Sigma^{n-1}\to \hat \E$
 is a bundle homomorphism.
\end{proposition}

We call $\hat \phi$ the {\it reduction\/} of $\phi$. 
By Proposition \ref{prop:def},
$\hat \E$ is a subbundle of codimension one
of $\E$.

\begin{proof}
 We fix a point $p\in \Sigma^{n-1}$ arbitrarily.
 It is sufficient to show the existence
 of linearly independent
 local sections $s_1,\dots,s_{n-1}$
 of $\hat \E$
 defined on a neighborhood of $p$
 in $\Sigma^{n-1}$.
 Since $\phi$ is non-degenerate,
 there exists an extended null
 vector field $\tilde \eta$ defined
 on a local coordinate neighborhood
 $(U;x_1,\dots,x_n)$ centered at $p$.
 Without loss of generality,
 we may assume that
 $\eta={\partial}/{\partial x_n}$ 
 holds at $p$.
 Since the kernel of $\phi$ is one dimensional,
 $s_j:=\phi\left({\partial}/{\partial x_j}\right)$
 ($j=1,\dots,n-1$)
 has the desired property.
\end{proof}

The following two assertions
(cf.\ Theorems \ref{thm:induction1}
and \ref{thm:induction2})
 gives
fundamental properties of the reduction homomorphism.

\begin{theorem}\label{thm:induction1}
 Let $\Sigma^{n-1}$
 be the singular set of a
 non-degenerate bundle homomorphism
 $\phi:TM^n\to \E$.
 Let $\lambda:U\to \R$ 
 and
 $\tilde \eta$
 be a $\phi$-function and
 an extended null vector field
 defined on an open subset
 $U(\subset M^n)$, respectively.
 Then the following assertions hold{\rm:}
 \begin{enumerate}
  \item\label{item:ind1} 
        The singular set $\Sigma^{n-2}\cap U$ 
        of $\hat \phi|_U$
	satisfies
	\[
	   \Sigma^{n-2}\cap U=\{q\in \Sigma^{n-1}
                  \cap U\,;\,
	   \tilde \eta_q\in T_q \Sigma^{n-1}\}
	  =\{q\in \Sigma^{n-1}\cap U\,;\, \lambda
	       =\tilde\eta\lambda=0\}.
	\]
  \item\label{item:ind2}
       $\tilde\eta\lambda$ 
       is a $\hat \phi$-function 
       defined on  $\Sigma^{n-1}\cap U$. 
 \end{enumerate}
\end{theorem}

\begin{proof}
 It can be easily checked that
 $\tilde \eta_q\in T_q\Sigma^{n-1}$ if and only if 
 $q\in \Sigma^{n-2}$ for each 
 $q\in \Sigma^{n-1}\cap U$.
 Thus, we get the equality
 \[
    \Sigma^{n-2}\cap U
     =\{q\in \Sigma^{n-1}\cap U
          \,;\,\tilde \eta_q\in T_q \Sigma^{n-1}\}
     =
      \{q\in \Sigma^{n-1}\cap U
            \,;\,\tilde\eta\lambda(q)=0\},
 \]
 proving the assertion \ref{item:ind1}.

 If $\hat \phi$ has no 
 singular points on $\Sigma^{n-1}\cap U$,
 then $\tilde \eta_q\not \in T_q\Sigma^{n-1}$
 for all $q\in \Sigma^{n-1}\cap U$, and thus
 $\tilde\eta \lambda$
 has no zeros, 
 so the assertion \ref{item:ind2}
 is trivially true.
 So we  may assume that 
 the singular set $\Sigma^{n-2}\cap U$ of
 $\hat \phi$ is not empty.

 We now fix a point $p\in \Sigma^{n-2}\cap U$,
 and take a local coordinate system
 $(V;y_1,\dots,y_n)$ centered at $p$
 such that $V\subset U$.
 Since $\phi$ is non-degenerate, 
 we may assume that
 $\partial\lambda/\partial y_1 \ne 0$ at $p$.
 By the implicit function theorem,
 there exists a function
 $y_1(y_2,\dots,y_n)$ such that
 $y_1(0,\dots,0)=0$ and
 \[
    \lambda\bigl(y_1(y_2,\dots,y_n),y_2,\dots,y_n\bigr)=0.
 \]
 If we set
\begin{equation}\label{eq:x-coord}
    x_1:=\lambda,\quad x_j:=y_j \qquad (j=2,\dots,n),
\end{equation}
then $(W;x_1,\dots,x_n)$ gives a new
 local coordinate system at $p$ if we choose $W(\subset V)$
 sufficiently small.
 We can write
\begin{equation}\label{eq:te}
\tilde\eta=b \partial_1+\sum_{j=2}^n c_j \partial_j
\end{equation}
on $\Sigma^{n-1}\cap W$, where we set 
 \[
   \partial_j:=\partial/\partial x_j\qquad  (j=1,2,\dots,n).
 \]
 Then we have that
 \begin{equation}\label{eq:lb}
  b(q)=\tilde\eta \lambda(q) \qquad (q\in \Sigma^{n-1}\cap W).
 \end{equation}
 Since $\tilde\eta\lambda(p)=0$ and $\tilde\eta(p)\ne 0$,
 we may assume that $c_2(p)\ne 0$ without loss of
 generality.
 If we set $\vect{e}_i:=\phi(\partial_i)$ ($i=1,2,\dots,n$),
 then 
 \begin{equation}
  \vect{e}_2
     =-\frac{\tilde\eta\lambda}{c_2}\vect{e}_1-\sum_{j=3}^{n}
       \frac{c_j}{c_2}
       \vect{e}_j
 \end{equation}
 holds on $\Sigma^{n-1}\cap W$
 for a sufficiently small $W$.
 We fix an inner product $\inner{~}{~}$
 on $\E$.  
 We can take a local unit section $\vect{u}$
 of $\E$ defined on 
 $\Sigma^{n-1}\cap W$ such that $\vect{u}$ is
 orthogonal to $\vect{e}_1,\dots,\vect{e}_n$.
 Then $\hat\E$ defined by \eqref{eq:quotient}
 is equal to
 the subbundle of $\E$ which is orthogonal
 to $\vect{u}$.
 Let $\mu$ be a local orientation of $\E$ on $W$.  
 It is obvious that
 $\vect{u}$, $\vect{e}_1$,
 $\vect{e}_3$,
 \dots,
 $\vect{e}_{n}$
 are linearly independent on $\Sigma^{n-1}\cap W$,
 and so we may assume that
 \[
    \delta:=\mu(\vect{u},\vect{e}_1,\vect{e}_3,
                    \dots,\vect{e}_{n})
 \]
 is a positive valued function on $\Sigma^{n-1}\cap W$. 
 Since $\hat\E$ is the
 subbundle of $\E$ which is orthogonal to $\vect{u}$, 
 \[
    \hat \mu(\vect{v}_1,\dots,\vect{v}_{n-1})
        :=\mu(\vect{u}_q,\vect{v}_1,\dots,\vect{v}_{n-1})
     \qquad (\vect{v}_1,\dots,\vect{v}_{n-1}\in \hat \E_q,\,\, q\in W)
 \]
 gives a local orientation of $\hat \E$, and
 a $\hat \phi$-function $\hat \lambda:W\to \R$ of
 $\hat\E$ is given by
 \begin{align*}
  \hat \lambda&:=
       \hat \mu(\vect{e}_2,\dots, \vect{e}_n)
           =\mu(\vect{u},\vect{e}_2,\dots, \vect{e}_n)\\
   &=-\frac{\tilde\eta\lambda}{c_2}
         \mu(\vect{u},\vect{e}_1,
                      \vect{e}_3,\dots,\vect{e}_{n})
         =-\frac{\delta}{c_2}\tilde\eta\lambda,
 \end{align*}
 which proves the assertion \ref{item:ind2}, since $p$ is an
 arbitrarily fixed point of $\Sigma^{n-1}\cap U$.
\end{proof}

Moreover, the following assertion holds.

\begin{theorem}\label{thm:induction2}
 Let $k$ be an integer satisfying
 $1\le k\le n$.
 Under the same assumptions as in
 Theorem \ref{thm:induction1},
 $p\in U$ is an $A_{k+1}$-point of $\phi$
 if and only if
 $p$ {\rm(}is a non-degenerate singular point
 of $\hat \phi$
 and{\rm)} 
 is an $A_k$-point of $\hat \phi$,
 where $A_1$-points mean regular points.
\end{theorem}

The restriction of
the null vector field $\tilde \eta$ to $\Sigma^{n-1}$
is not tangent to $\Sigma^{n-1}$ in general.
To prove Theorem \ref{thm:induction2},
we now construct an extended null vector
field $\tilde \zeta$ as a modification
of $\tilde \eta$ as follows:
as in the proof of Theorem \ref{thm:induction1},
we fix a point $p$.
Let $(W;x_1,\dots,x_n)$ be the local coordinate system
centered at $p$ given in the proof of 
Theorem \ref{thm:induction1}.
By \eqref{eq:te} and \eqref{eq:lb},
\begin{equation}\label{eq:zeta}
     \tilde\zeta=\tilde\eta-(\tilde\eta\lambda) \partial_1
\left(=\sum_{j=2}^n c_j \partial_j\right)
\end{equation}
gives an extended null vector field
of $\hat \phi$ on $\Sigma^{n-1}\cap W$.
Let $\mu_1$,\dots, $\mu_r$ be fixed smooth functions
on $W$. 
For two $C^\infty$-functions
$f$,  $g$ on $W$, we write 
\[
 f\equiv g \mod (\mu_1,\dots,\mu_r)
\]
if there exist $C^\infty$-functions $h_1$,\dots, $h_r$
defined on $W$ such that
\[
       f-g =h_1 \mu_1+\dots +h_r\mu_r.
\]
The following lemma is obvious:
\begin{lemma}\label{lem:mod}
 If $f\equiv g \mod (\mu_1,\dots,\mu_r)$,
 then it holds that
 \[
   \tilde \eta f\equiv \tilde \eta g \mod (\mu_1,\dots,\mu_r,
    \tilde \eta\mu_1,\dots,\tilde \eta\mu_r).
 \]
\end{lemma}

We prove the following assertion.
\begin{proposition}\label{prop:mod}
 The equalities
 \begin{equation}\label{eq:mod}
  \tilde\eta^{j+1}\lambda\equiv
   \tilde\zeta^{j}(\tilde\eta\lambda)\quad 
   \mod(\tilde\eta\lambda,\dots,\tilde\eta^{j}\lambda)
   \qquad (j=1,\dots,k-1)
 \end{equation}
 hold on $W$.
\end{proposition}

\begin{proof}
 We prove the assertion by induction on $j$.
 If $j=1$, we have that (cf.\ \eqref{eq:zeta})
 \[
    \tilde\zeta (\tilde\eta\lambda)
   =\bigl(\tilde\eta-(\tilde\eta\lambda) \partial_1\bigr)(\tilde \eta \lambda)\\
   =\tilde\eta(\tilde \eta \lambda)
       -\tilde \eta\lambda(\tilde \eta\lambda)_{x_1}\\
     \equiv
     \tilde\eta^{2}\lambda  \quad \mod (\tilde\eta\lambda).
 \]
 So we now assume that \eqref{eq:mod} 
 holds and consider the case of $j+1$.
 It holds that
 \[
   \tilde\zeta^{j+1}(\tilde \eta\lambda)
    =\tilde \zeta(\tilde \zeta^{j}(\tilde \eta \lambda))
    =\tilde \eta(\tilde \zeta^{j}(\tilde \eta \lambda))
    -\tilde \eta\lambda(\tilde \zeta^j(\tilde \eta\lambda))_{x_1}.
 \]
 In particular
 \begin{equation}\label{eq:induction1}
  \tilde \zeta^{j+1}(\tilde \eta\lambda)
   \equiv
   \tilde \eta(\tilde\zeta^{j}(\tilde \eta \lambda))
   \quad \mod (\tilde\eta \lambda).
 \end{equation}
 On the other hand,
 applying Lemma \ref{lem:mod} to \eqref{eq:mod}, we have
 \begin{equation}\label{eq:induction2}
  \tilde \eta(\tilde \zeta^j(\tilde \eta\lambda))
   \equiv \tilde \eta(\tilde \eta^{j+1}\lambda)
   \quad \mod(\tilde\eta\lambda,\dots,\tilde\eta^{j+1}\lambda).
 \end{equation}
 By \eqref{eq:induction1} and \eqref{eq:induction2},
 we get the assertion for $j+1$.
\end{proof}

\begin{proof}[Proof of Theorem \ref{thm:induction2}]
 By \ref{item:ind1} of Theorem \ref{thm:induction1},
 $p\in \Sigma^{n-1}$ is an $A_2$-point
 if and only if
 $\tilde \eta\lambda(p)\ne 0$.
 By \ref{item:ind2} of Theorem \ref{thm:induction1},
 $\tilde \eta\lambda$ is a $\hat \phi$-function,
 and thus $\tilde \eta\lambda(p)\ne 0$
 if and only if $p$ is a regular point of $\hat \phi$.
 This proves the assertion for $k=1$.
 So we now consider the case that $k\ge 2$.
 We set $\lambda_1:=\tilde \eta \lambda$.
 Since $k\ge 2$, we have
 \begin{equation}\label{eq:l12}
  \lambda(p)=\lambda_1(p)=0.
 \end{equation}
 Under this assumption \eqref{eq:l12}, 
 $p$ satisfies
 \ref{item:ojm:1} of Definition \ref{fact:ojm} if and only if
 \begin{equation}\label{eq:first0}
  \tilde \eta\lambda_1(p)=\cdots =
   \tilde \eta^{k-2} \lambda_1(p)=0,
   \quad \tilde \eta^{k-1} \lambda_1(p)\ne 0.
 \end{equation}
 By Proposition \ref{prop:mod},
 this is equivalent to the condition
 \begin{equation}\label{eq:first1}
  \tilde \zeta\lambda_1(p)=\cdots =
   \tilde \zeta^{k-2} \lambda_1(p)=0,
   \quad \tilde \zeta^{k-1} \lambda_1(p)\ne 0.
 \end{equation}
 On the other hand,
 we can take a local coordinate system
 $(x_1,\dots,x_n)$ centered at $p$
 such that (cf.\ \eqref{eq:x-coord})
 \begin{enumerate}
  \item
       $\lambda_{x_1}(p)\ne 0$ and
       $\lambda_{x_2}(p)=\cdots=\lambda_{x_n}(p)=0$,
  \item $(x_2,\dots,x_n)$ gives a local
	coordinate system of $\Sigma^{n-1}$ at $p$.
 \end{enumerate}
 The existence of this coordinate system
 yields that
 $p$ satisfies
 \ref{item:ojm:2} of Definition \ref{fact:ojm} if and only if
 the Jacobi matrix of the $\R^{k-1}$-valued
 $C^\infty$-function 
 \[
  \Lambda_1:=(\tilde\eta\lambda,\dots,\tilde\eta^{k-1}\lambda)
 =(\lambda_1,\tilde\eta \lambda_1,\dots,\tilde\eta^{k-2}\lambda_1)
 \]
 is of rank $k-1$ at $p$.
 By Proposition \ref{prop:mod},
 $\Lambda_1$ has the same rank as
 the function
 \[
 \hat \Lambda_1:=
 (\lambda_1,\tilde\zeta \lambda_1,\dots,\tilde\zeta^{k-2}\lambda_1)
 \]
 at $p$.
 Together with \eqref{eq:first1},
 we get the assertion.
\end{proof}

For the sake of simplicity, we denote $\tilde\eta\lambda$ as in 
\eqref{eq:lambda-der-1}
by $\dot\lambda$,  and
\begin{equation}\label{eq:lambda-der-dot}
   \dot\lambda:=\tilde\eta\lambda,
    \qquad
    \ddot\lambda:= \tilde\eta^{2}\lambda, \quad\dots\quad,\quad
    \lambda^{(k)}:=\tilde\eta^{k}\lambda
\end{equation}
from now on.

Let $p$ be an $\A_{k+1}$-point
of a non-degenerate homomorphism $\phi:TM^n\to \E$.
We fix an extended null vector field $\tilde \eta$
defined on a neighborhood $U$ of $p$.
Then for each $j=1$,\dots, $k-1$, it holds that
(cf.\ Definition \ref{eq:l12})
 \begin{enumerate}
  \item[(1-$j$)]\label{item:1j}
       $\lambda(p)=\dot \lambda(p)=\dots =\lambda^{(j-1)}(p)=0$,
  \item[(2-$j$)]\label{item:2j}
       and the Jacobi matrix of the $\R^j$-valued
       $C^\infty$-map 
       $\Lambda:=(\lambda,\dot \lambda,\dots,\lambda^{(j-1)})$
       is of rank $j$ at $p$.
 \end{enumerate}
By the implicit function theorem, 
there exists a neighborhood $V_j(\subset U)$ of $p$ and
an $(n-j)$-dimensional submanifold
$S^{n-j}$ such that
\begin{equation}\label{eq:S}
 S^{n-j}=\{q\in V_j\,;\,
  \lambda(q)=\dot \lambda(q)=\cdots =\lambda^{(j-1)}(q)=0
  \}.
\end{equation}
So we set $V:=\bigcap_{j=1}^{k} V_j$.

\begin{lemma}
 The restriction $\phi|_V:TV\to \E|_V$
 of $\phi$ induces the
 $j$-th non-degenerate
 reduction homomorphism
 \[
       (\phi|_V)^{(j)}:T\Sigma^{n-j}_V\longrightarrow \E^{(j)}
                     \qquad (j=1,\dots,k)
 \]
 such that the singular set $\Sigma^{n-j-1}_V$
 of $(\phi|_V)^{(j)}$ satisfies
 \begin{equation}\label{eq:ind00}
  \Sigma^{n-j-1}_V=S^{n-j-1}
 \end{equation}
 and $\lambda^{(j)}:V\to \R$ gives a $(\phi|_V)^{(j)}$-function,
 where $\Sigma^{n-j}_V$ is the singular set of
 $(\phi|_V)^{(j-1)}$.
\end{lemma}

\begin{proof}
 When $j=1$, Theorem \ref{thm:induction1} 
 implies the assertion.
 We show the assertion inductively.
 We assume that the $(j-1)$-th reduction
 $(\phi|_V)^{(j-1)}:T\Sigma^{n-j+1}_V\to \E^{(j-1)}$
 exists and the equality 
 \begin{equation}\label{eq:ind000}
    \Sigma^{n-j}_V=S^{n-j}
 \end{equation}
 holds and
 $\lambda^{(j-1)}$ is a  $(\phi|_V)^{(j-1)}$-function.
 Since 
 $p$ is an $\A_{k+1}$-point,
Theorem \ref{thm:induction2}
 yields that $p$ is an $\A_{k-j+2}$-point
 of $(\phi|_V)^{(j-1)}$.
 Since $k\ge j$, the reduction 
 \[
   (\phi|_V)^{(j)}:T\Sigma^{n-j}_V\longrightarrow \E^{(j)},
 \]
 is non-degenerate 
 if we choose a sufficiently small $V$,
 where $\Sigma^{n-j-1}_V$ is the singular set of 
 $(\phi|_V)^{(j)}$.
 Then \ref{item:ind1} of Theorem \ref{thm:induction1} implies that  
 \[
    \Sigma^{n-j-1}_V:=\{q\in \Sigma^{n-j}_V\,;\, 
          \eta_q\in T_q\Sigma^{n-j}_V\}.
 \]
 Since $\eta_q\in T_q \Sigma^{n-j}_V$ holds if and
 only if 
 \[
   \lambda^{(j)}(q)=d\lambda^{(j-1)}(\eta_q)=0,
 \]
 we have that
 \[
   \Sigma^{n-j-1}_V=\{q\in \Sigma^{n-j}_V\,;\, \eta_q
         \in T_q\Sigma^{n-j}_V\}
    =
    \{q\in \Sigma^{n-j}_V\,;\, \lambda^{(j)}(q)=0\}.
 \]
 Moreover, by \eqref{eq:ind000},
 \begin{equation}\label{eq:V}
    \Sigma^{n-j-1}_V=\{q\in \Sigma^{n-j}_V\,;\, 
           \lambda^{(j)}(q)=0\}
     =\{q\in S^{n-j}\,;\, \lambda^{(j)}(q)=0\}
     =S^{n-j-1}.
 \end{equation}
 We fix a $(\phi|_V)^{(j)}$-function
 $\lambda_j:\Sigma^{n-j}_V\to \R$.
 Since we have shown that $(\phi|_V)^{(j)}$ 
is non-degenerate,
 $d\lambda_j\ne 0$ on $\Sigma^{n-j-1}_V$.
 By \eqref{eq:V},
 the zeros of $\lambda^{(j)}$ 
 coincide with 
 those of $\lambda_j$. 
 Then the division property of
 $C^\infty$-functions yields that
 there exists a $C^\infty$-function germ $\sigma$
 on $\Sigma^{n-j}_V$
 such that
 \[
    \lambda^{(j)}=\sigma \lambda_j.
 \]
 Since $d\lambda^{(j)}(p)\ne 0$ by \ref{item:ojm:2} of Definition~\ref{fact:ojm}
 we have $\sigma(p)\ne 0$, namely $\lambda^{(j)}$
 is also a $(\phi|_V)^{(j)}$-function.
 Thus we proved the $j$-th step of the
 induction procedure.
\end{proof}

Since the singular set
of the $j$-th reduction $\phi^{(j)}$ does not depend
on  the choice of $\lambda$ and $\tilde \eta$,
we get the following assertion.

\begin{proposition}\label{lemma:inv}
 Let $p$ be an $\A_{k+1}$-point of
 a non-degenerate homomorphism
 $\phi:TM^n\to \E$, and
 $\tilde \eta$ an extended null vector field
 defined on a neighborhood $U$ of $p$.
 Then there exists a neighborhood $V(\subset U)$ of $p$
 such that
 \begin{align*}
  \Sigma^{n-j}_V:
  &=\{q\in V\,;\,
  \lambda(q)=\cdots=\lambda^{(j-1)}(q)=0\} \\
  &=
  \{q\in \Sigma^{n-j+1}_V\,;\,
  \tilde \eta_q\in T_q\Sigma^{n-j+1}_V\}
  \qquad (j=1,\dots,k)
 \end{align*}
 is an $(n-j)$-dimensional submanifold of $V$.
 Moreover, each $\Sigma^{n-j}_V$ does not
 depend on the choice of $\lambda$ and $\tilde \eta$.
 Furthermore, the following equalities hold
 \[
       \AA_{2}\cap V=\Sigma^{n-1}_V
               \setminus \Sigma^{n-2}_V,\dots,
                \AA_{k}\cap V=
              \Sigma^{n-k+1}_V
                   \setminus \Sigma^{n-k}_V,\quad
                    \AA_{k+1}\cap V
                =\Sigma^{n-k}_V,
 \]
 where $\AA_{j+1}$ $(j=1,\dots,k)$ is the set 
 of $A_k$-points
 of $\phi$.
\end{proposition}

In this paper, we mainly discuss on
bundle homomorphisms having only 
$A_{k+1}$-singularities ($1\le k\le n$), 
so we give the following definition.

\begin{definition}\label{def:Morin}
 A non-degenerate homomorphism $\phi:TM^n\to \E$
 is called a {\it Morin homomorphism\/} if 
 the set of singular points of $\phi$ consists of
 $\A_k$-points for $k=2,3,\dots,n+1$.
 A Morin homomorphism $\phi$ is called of
 {\it depth $k$\/} if 
 $\A_{k+1}$-points exist
 but there are no 
 $\A_{k+2}$-points on $M^n$.
\end{definition}

The following assertion follows 
immediately from the definition of Morin homomorphisms.

\begin{proposition}
 Let $\phi:TM^n\to \E$ be a
 non-degenerate homomorphism and $p\in M^n$ 
 an $A_{k+1}$-point. Then
 there exists a neighborhood $U$ of $p$
 such that the restriction of $\phi$
 into $U$ gives a Morin
 homomorphism.
\end{proposition}

\begin{proof}
 Take an extended null vector field $\tilde \eta$
 defined on $U$.
 Since $p$ is an $A_{k+1}$-point,
 there exists a neighborhood $U$ of $p$
 such that 
 \begin{itemize}
  \item $\lambda^{(k)}\ne 0$ on $U$, and
  \item the Jacobi matrix of $\Lambda$ as in
	Definition \ref{fact:ojm} is of rank $k$
	on $U$,
 \end{itemize}
 where $\lambda$ is a local $\phi$-function
 defined on $U$. 
 Let $q\in U$ be a singular point of $\phi$.
 Then there exists a positive 
 integer $j(\le k)$ such that 
 \[
    \lambda^{(0)}(q)=\cdots =\lambda^{(j-1)}(q)=0,\qquad
    \lambda^{(j)}(q)\ne 0.
 \]
 Then $q$ is an $A_{j+1}$-point, proving the
 assertion.
\end{proof}

Moreover, as a corollary of 
Theorem \ref{thm:induction2},
we get the following assertion.

\begin{proposition}\label{prop:redMorin}
 Let $\phi:TM^n\to \E$ be a Morin homomorphism
 of depth $k(\ge 2)$.
 Then its reduction
 $\hat \phi:T\Sigma^{n-1}\to \hat \E$ is
 a Morin homomorphism
 of depth $k-1$. 
\end{proposition}

Suppose that $\phi:TM^n\to \E$
is a Morin homomorphism of depth $k$.
By Proposition \ref{lemma:inv},
\[
   \Sigma^{n-j}:=\{p\in M^n\,;\,
                 \lambda(p)=\cdots=\lambda^{(j-1)}(p)=0\}
              \qquad (j=1,\dots,k)
\]
does not depend on the choice of a $\phi$-function
$\lambda$ and the extended null vector field $\tilde \eta$,
that is, it is well-defined as an $(n-j)$-dimensional
submanifold of $M^n$, and
\[
   \AA_{2}=\Sigma^{n-1}
         \setminus \Sigma^{n-2}\,\,,\dots,\,\,
                \AA_{k}=
      \Sigma^{n-k+1}
           \setminus \Sigma^{n-k},\quad
             \AA_{k+1}
         =\Sigma^{n-k}.
\]
In this case, we give the following conventions
\[
    \AA_{k+j+1}=
               \Sigma^{n-k-j}=\emptyset \qquad (1\le j\le n-k).
\]

We now consider the case that $\E$ is orientable.
Then, there is a non-vanishing section $\mu$
of the determinant line bundle of the dual 
bundle $\E^*$ of $\E$
defined on $M^n$.
We call $\mu$ an  {\it orientation\/} of $\E$.
In this case,
there is a unique $C^\infty$-function 
$\lambda:M^n\to \R$
such that
\begin{equation}\label{eq:lambda2}
    \phi^*\mu=\lambda\, \Omega,
\end{equation}
where $\Omega$ is
an {\it orientation\/} of $M^n$.
We call $\lambda$ the {\it $\phi$-function\/}
associated to $\mu$ and $\Omega$
defined on $M^n$.
We set 
\begin{equation}\label{eq:pm}
  M^n_+:=\{p\in M^n\,;\, \lambda(p)>0\},\qquad
  M^n_-:=\{p\in M^n\,;\, \lambda(p)<0\}.
\end{equation}
Then
$\Sigma^{n-1}$ 
coincides with the boundary 
$\partial M^{n}_+=\partial M^{n}_-$.

\begin{definition}
 Let $\phi:TM^{n}\to \E$ be a non-degenerate
 bundle homomorphism and $\lambda$ a 
 $\phi$-function associated to $\mu$ and $\Omega$.
 A $\phi$-function $\tau:U\to \R$ 
 defined on an open subset $U(\subset M^n)$
 is called an {\it oriented $\phi$-function\/}
 if there exists a positive valued function
 $\sigma\in C^\infty(U)$ such that
 $\tau=\sigma \lambda$ on $U$.
\end{definition}

Our definition of Morin homomorphisms
is motivated by the existence of the
following two typical examples:
Let $m$, $n$ be two positive integers.
Two differentiable map germs 
$f_i\colon{}(\R^m,p_i)\to(\R^{n},q_i)$ $(i=1,2)$ are
{\em right-left equivalent\/} if there exist diffeomorphism
germs 
$\psi\colon{}(\R^m,p_1)\to (\R^m,p_2)$ and 
$\Psi:(\R^{n},q_1)\to(\R^{n},q_2)$
such that $\Psi\circ f_1=f_2\circ\psi$.

\begin{definition}\label{def:Morin0}
 The {\it Morin-$k$-singularities\/}
 ($1\le k \le n$)
 are map germs which 
 are right-left equivalent to
 \[
   f(x_1,\dots,x_n) =\biggl(
          x_1x_n +x_2 (x_n)^2+\cdots+ x_{k-1}(x_n)^{k-1}
         + (x_n)^{k+1}
               ,x_1,\dots,x_{n-1}\biggr)
 \]
 at the origin. 
 The Morin-$0$-singularities 
 mean regular points.
\end{definition}

\begin{example}\label{ex:Morin}
 Let $M^n$ and $N^n$ be oriented $n$-manifolds, 
 and let 
 $f:M^n\to N^n$ be a $C^\infty$-map having only 
 Morin singularities.
 Then the differential $df$
 of $f$ canonically induces a Morin homomorphism
 (cf.\ Appendix of \cite{SUY3})
 \[
    \phi=df:TM^n\longrightarrow \E_f:=f^*TN^n.
 \]
 Let $\omega_{M^n}$ and $\mu_{N^n}$ be the fundamental
 $n$-forms of $M^n$ and $N^n$, respectively.
 Then there exists a $C^\infty$-function
 $\lambda$ on $M^n$ such that
 $f^*\mu_{N^n}=\lambda \omega_{M^n}$, 
 which gives an oriented $\phi$-function.
 The set $M^n_+$ (resp.\ $M^n_-$) coincides 
 with the set where $\lambda>0$ (resp.\ $\lambda<0$).
 The sign of  $\lambda$ coincides with the sign
 of the Jacobian of $f$  with respect to oriented 
 local coordinate  systems of $M^n$ and $N^n$.
 In this case, Morin-$k$-points of the map $f$
 are $\A_{k+1}$-points of the homomorphism $\varphi=df$
 (see \cite[Theorem A1]{SUY3}).
 When $(N^n,ds^2)$ is a Riemannian manifold,
 then the pull-back bundle $f^*TN^n$ on $M^n$
 has a canonical coherent tangent bundle structure
 (cf.\ \cite{SUY6}).  
\end{example}

\begin{definition}\label{def:front}
 The $A_{k+1}$-type  singularity 
 (or $A_{k+1}$-front singularity)
 is a map germ defined by
 \begin{equation}\label{eq:ak-def}
    X
      \longmapsto
    \left(
        (k+1)t^{k+2}+\sum_{j=2}^k(j-1)t^jx_j,
        -(k+2)t^{k+1}-\sum_{j=2}^kjt^{j-1}x_j,
        X_{1}
    \right)
 \end{equation}
 at the origin, where $X=(t,x_2,\dots,x_n)$ and $X_{1}=(x_2,\dots,x_n)$.
 Its image coincides 
 with the discriminant set 
 $\{F=F_t=0\}\subset (\R^{n+1};u_0,\dots,u_n)$ 
 of
 the versal unfolding 
 \begin{equation}\label{eq:versal}
    F(t,u_0,\dots,u_n):=t^{k+2}+u_{k} t^k+\dots +u_1 t +u_0.
 \end{equation}
 By definition, $A_1$-front singularities are regular points.
 A $3/2$-cusp in a plane is an $A_2$-front singularity
 and a swallowtail in $\R^3$ is an $A_3$-front singularity.
\end{definition}

\begin{example}\label{ex:wave}
 Let $f:M^n\to \R^{n+1}$ be a wave front
 which admits only $\A_{k+1}$-type singularities
 ($k=1,\dots,n$). 
 Suppose that $f$ is {\it co-orientable\/},
 that is, there exists a globally
 defined unit normal vector field $\nu$ along $f$.
 Let $f^*T\R^{n+1}$
 be the pull-back of $T\R^{n+1}$ by $f$, and
 consider the subbundle $\E_f$ of $f^*T\R^{n+1}$ 
 whose fiber $\E_p$ at $p\in M^n$ is 
 the orthogonal complement of $\nu_p$.
 Then the differential $df$ of $f$ induces
 a bundle homomorphism
 \[
     \phi_f=df:TM^n\ni v \longmapsto df(v)\in \E_f
 \]
 called the {\it first homomorphism\/} of $f$
 as in \cite[Section 2]{SUY4},
 which gives a Morin homomorphism
 (cf.\ Appendix of \cite{SUY3}).
 Consider the function
 \[
     \lambda:=\det(f_{x_1},\cdots,f_{x_{n}},\nu),
 \]
 where $f_{x_i}:=\partial f/\partial x_i$ ($i=1,\dots,n$)
 and $(x_1,\dots,x_n)$ is an oriented 
 local coordinate system of $M^n$.
 Then $\lambda$ is an oriented $\phi$-function of $\E_f$,
 and the set $M^n_+$ (resp.\ $M^n_-$) coincides 
 with the set where $\lambda>0$ (resp.\ $\lambda<0$).
 Moreover, $A_{k+1}$-front singular points  of the map $f$
 are $\A_{k+1}$-points of the homomorphism $\varphi=df$
 (see \cite[Corollary 2.5]{SUY3}).
 As in the case of the previous example,
 $\E_f$ has a canonical coherent tangent 
 bundle structure (cf.\ \cite{SUY6}).
\end{example}

\begin{remark}\label{rmk:Morin}
 As seen in Examples \ref{ex:Morin}
 and \ref{ex:wave}, our definition of $\A_k$-points 
 gives a unified intrinsic treatment of singularities
 of both Morin maps of the same dimension
 and the $A_k$-singularities appearing in 
 hypersurfaces in $\R^{n+1}$.
 In this intrinsic treatment, 
 the usual $k$-th singular points
 for Morin maps and the
 $A_{k+1}$-points for wave fronts
 are both regarded as $\A_{k+1}$-points of 
 bundle homomorphisms.
 In other words, 
 the order of singularities of Morin maps is
 not synchronized with
 the order of singularities of the corresponding 
 bundle homomorphisms.
 For example, a fold (i.e.\ a Morin-1-singularity)
 and a cusp (i.e.\ a  Morin-2-singularity)
 induce  an $\A_2$-point and
 an $\A_3$-point of bundle homomorphism,
 respectively.
\end{remark}

\section{Characteristic vector fields}\label{sec:char}
We fix a Morin homomorphism $\phi:TM^n\to \E$, where
$M^n$ is an oriented compact $n$-manifold.
We now suppose that $\E$ is oriented,
and fix an oriented $\phi$-function $\lambda:M^n\to \R$.
Then the singular set  $\Sigma^{n-j}$ ($j=0,\dots,n$)
of the $(j-1)$-th reduction $\phi^{(j-1)}$
defined in the previous section 
is an orientable submanifold of $M^n$,
unless it is empty.

\begin{proposition}\label{prop:k_even}
 If $k$ $(2\le k\le n)$ is even, then 
 the sign of the function $\lambda^{(k)}$ 
 does not depend on the choice of 
 the extended null vector field $\tilde \eta$.
\end{proposition}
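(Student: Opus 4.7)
The plan is to compute directly how $\lambda^{(k)}$ transforms when the extended null vector field $\tilde\eta$ is multiplied by a nonvanishing smooth function $f$ on $U_p$, and then specialize to $f=-1$, which corresponds to the sign reversal $\eta\mapsto-\eta$. Interpreting $\lambda^{(k)}$ as the $k$-fold application of $\tilde\eta$ to $\lambda$ as a derivation, the replacement $\tilde\eta\mapsto f\tilde\eta$ produces the new quantity $\lambda^{(k)}_{\mathrm{new}}=(f\tilde\eta)^k(\lambda)$. The central claim is that on the stratum $\Sigma^{n-k}$ one has
\[
  (f\tilde\eta)^k(\lambda)=f^k\,\lambda^{(k)}.
\]

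To prove this identity I would first expand $(f\tilde\eta)^k$ as a differential operator with smooth coefficients via iterated Leibniz, obtaining
\[
  (f\tilde\eta)^k=f^k\,\tilde\eta^k+\sum_{j=0}^{k-1}c_j\,\tilde\eta^j,
\]
where each coefficient $c_j$ is a polynomial in $f$ and its iterated $\tilde\eta$-derivatives. The leading coefficient $f^k$ arises by collecting one factor of $f$ from each of the $k$ occurrences. Applying the operator to $\lambda$ yields $f^k\lambda^{(k)}+\sum_{j=0}^{k-1}c_j\lambda^{(j)}$, and by the very definition of the stratification $\Sigma^{n-k}$ every $\lambda^{(j)}$ with $j\le k-1$ vanishes there, so only the leading term survives.

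Once the identity is in hand, the proposition follows immediately. Specializing to $f=-1$ gives $\lambda^{(k)}_{\mathrm{new}}=(-1)^k\lambda^{(k)}$ on $\Sigma^{n-k}$, which is $+\lambda^{(k)}$ precisely when $k$ is even. Hence the sign of $\lambda^{(k)}$ on $\Sigma^{n-k}$ is invariant under the sign reversal of the null vector field. The argument incidentally shows the stronger statement that the sign is unchanged under rescaling by \emph{any} nonvanishing smooth function when $k$ is even, since then $f^k>0$.

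The main obstacle is the bookkeeping in the Leibniz expansion of $(f\tilde\eta)^k$, which must produce $f^k\tilde\eta^k$ as the top-order term with all correction terms of strictly lower order in $\tilde\eta$. This is handled by a short induction: assuming $(f\tilde\eta)^{k-1}=f^{k-1}\tilde\eta^{k-1}+(\text{order}\le k-2)$, applying $f\tilde\eta$ once more gives $f\tilde\eta(f^{k-1})\tilde\eta^{k-1}+f^k\tilde\eta^k+(\text{order}\le k-1)$, whose top-order piece is $f^k\tilde\eta^k$. The stratified vanishing of the $\lambda^{(j)}$ then disposes of all correction terms in a single stroke, yielding the desired sign invariance on $\Sigma^{n-k}$.
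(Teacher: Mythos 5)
Your proof is correct and follows essentially the same route as the paper's, which simply observes that replacing $\eta$ by $-\eta$ multiplies $\lambda^{(k)}$ by $(-1)^k=1$ for even $k$. Your Leibniz bookkeeping is only needed because you treat a general nonvanishing rescaling $f\tilde\eta$; for the constant $f=-1$ of the statement there are no correction terms at all, though your stronger conclusion (invariance of the sign under rescaling by \emph{any} nonvanishing function, using the vanishing of $\lambda^{(j)}$ for $j<k$ on $\Sigma^{n-k}$) is a legitimate bonus related to the independence the paper defers to \cite{SUY4}.
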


\begin{proof}
 Even if we change the extended
 null vector field $\tilde \eta$ to $-\tilde\eta$,
 the sign of the 
 function $\lambda^{(k)}$ on the set $\Sigma^{n-k}$
 does not change, since $k$ is even.
\end{proof}

Hence, for each even integer $k$
($2\le k \le n$),
we can set
\[
  \Sigma^{n-k}_+:=\{p\in \Sigma^{n-k}\,;\, 
                          \lambda^{(k)}(p)>0\}, \quad
  \Sigma^{n-k}_-:=\{p\in \Sigma^{n-k}\,;\, 
                          \lambda^{(k)}(p)<0\}.
\]
As a convention, we define
$\Sigma^{n}_+=M^n_{+}$ and
$\Sigma^{n}_-=M^n_{-}$,
where $M^n_{\pm}$ are as in \eqref{eq:pm}.
Also, the following assertion holds:
\begin{proposition}\label{prop:k_odd}
 Let $k$ be an odd  positive integer, and
 $p$ an $\A_{k+1}$-point.
 Then the scalar multiple $\lambda^{(k)}\eta$
 of the null vector field $\eta$  
 along $\Sigma^{n-k}$
 points toward the domain $\Sigma^{n-k+1}_+$
 at $p$, where $\Sigma^n:=M^n$.
\end{proposition}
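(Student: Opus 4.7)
The plan is to show, at an $\A_{k+1}$-point $p$ with $k$ odd, three facts: (a) the null vector $\eta_p$ lies in $T_p\Sigma^{n-k+1}$, (b) the hypersurface $\Sigma^{n-k}$ separates a neighborhood of $p$ in $\Sigma^{n-k+1}$ into $\Sigma^{n-k+1}_+$ and $\Sigma^{n-k+1}_-$ transversally, and (c) the directional derivative of $\lambda^{(k-1)}$ along $\lambda^{(k)}(p)\eta_p$ is strictly positive. The conclusion then follows by considering a curve in $\Sigma^{n-k+1}$ with that initial velocity.

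For (a), I would invoke Fact~\ref{fact:ojm}: since $p\in A_{k+1}$, one has $\lambda(p)=\dot\lambda(p)=\cdots=\lambda^{(k-1)}(p)=0$ and the differentials $d\lambda(p),\dots,d\lambda^{(k-2)}(p)$ are linearly independent. Consequently $\Sigma^{n-k+1}=\{\lambda=\dot\lambda=\cdots=\lambda^{(k-2)}=0\}$ is a submanifold near $p$, and $T_p\Sigma^{n-k+1}$ is the common kernel of these differentials. Using the defining identity $\lambda^{(j+1)}=d\lambda^{(j)}(\tilde\eta)$, one finds $d\lambda^{(j)}(p)(\eta_p)=\lambda^{(j+1)}(p)=0$ for every $j=0,1,\dots,k-2$, yielding $\eta_p\in T_p\Sigma^{n-k+1}$. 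For (b), the same identity with $j=k-1$ gives $d\lambda^{(k-1)}(\eta_p)=\lambda^{(k)}(p)\neq 0$, so the restriction of $d\lambda^{(k-1)}$ to $T_p\Sigma^{n-k+1}$ is nonzero; hence $\Sigma^{n-k}=\{\lambda^{(k-1)}=0\}\cap\Sigma^{n-k+1}$ is a smooth hypersurface through $p$. Because $k-1$ is even, Proposition~\ref{prop:k_even} ensures that $\Sigma^{n-k+1}_\pm=\{q\in\Sigma^{n-k+1}\,;\,\pm\lambda^{(k-1)}(q)>0\}$ is independent of the choice of sign for $\eta$, so the statement is well-posed.

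For (c), I would pick a smooth curve $\gamma(t)$ in $\Sigma^{n-k+1}$ with $\gamma(0)=p$ and $\dot\gamma(0)=\lambda^{(k)}(p)\eta_p$, whose existence is guaranteed by (a). Then
\[
\frac{d}{dt}\bigg|_{t=0}\lambda^{(k-1)}(\gamma(t))=\lambda^{(k)}(p)\cdot d\lambda^{(k-1)}(\eta_p)=\bigl(\lambda^{(k)}(p)\bigr)^2>0,
\]
and since $\lambda^{(k-1)}(p)=0$, the curve enters $\Sigma^{n-k+1}_+$ for small $t>0$, which proves the claim. The main delicate point is step (a), where one must confirm that the value $d\lambda^{(j)}(p)(\eta_p)$ depends only on $\eta_p$ rather than on the chosen extension $\tilde\eta$; this is a standard tensorial observation and poses no real obstacle. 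The rest is a direct first-order transversality computation.
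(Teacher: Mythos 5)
Your proof is correct and follows essentially the same route as the paper's: the key step in both is the computation $d\lambda^{(k-1)}\bigl(\lambda^{(k)}(p)\eta_p\bigr)=\bigl(\lambda^{(k)}(p)\bigr)^2>0$ at a point of the level set $\{\lambda^{(k-1)}=0\}$ in $\Sigma^{n-k+1}$. The paper phrases the conclusion via the inner product of $\lambda^{(k)}\eta$ with $\grad\bigl(\lambda^{(k-1)}\bigr)$ in the induced metric rather than via a curve, and leaves your preliminary steps (a) and (b) implicit, but the argument is the same.
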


\begin{proof}
 We now take a Riemannian metric $ds^2$ on $M^n$.
 We denote by $ds^2_{n-k+1}$ the Riemannian metric
 of $\Sigma^{n-k+1}$ induced by $ds^2$. 
 Then the hypersurface $\Sigma^{n-k}$ 
 embedded in $\Sigma^{n-k+1}$
 can be characterized as the level set $\lambda^{(k-1)}=0$.
 Then we have that
 \[
    ds^2_{n-k+1}\bigl(\tilde \eta_p,
       \grad(\lambda^{(k-1)})_p\bigr)
      =d\lambda^{(k-1)}_p(\tilde \eta_p)=\lambda^{(k)}(p),
 \]
 where ``$\grad$'' denotes the gradient of the function
 with respect to the metric $ds^2_{n-k+1}$.
 Thus
 $ds^2_{n-k+1}\bigl(\lambda^{(k)}\tilde \eta,\grad(\lambda^{(k-1)})\bigr)$
 is positive at $p$.
 Since $\grad(\lambda^{(k-1)})$
 gives  a normal vector field
 along $\Sigma^{n-k}$ 
 pointing toward $\Sigma^{n-k+1}_+$,
 the assertion is proven.
\end{proof}

\begin{definition}\label{def:pm-ak}
 Let $\phi:TM^n\to \E$ be a Morin 
 homomorphism  and $p$ an $\A_{2k+1}$-point.
 Since the sign of  $\lambda^{(2k)}(p)$
 does not depend on the $\pm$-ambiguity of 
 the choice of extended 
 null vector field $\tilde \eta$
 (cf.\ Proposition \ref{prop:k_even}),
 we call $p$ 
 a {\it positive $\A_{2k+1}$-point\/} 
 (resp.\ a {\it negative $\A_{2k+1}$-point})
 if $\lambda^{(2k)}(p)$ is positive (resp.\ negative).
\end{definition}
The set of positive (resp.\ negative) 
$\A_{2k+1}$-points  is denoted by 
$\AA^+_{2k+1}$ (resp.\ $\AA^-_{2k+1}$).
Then the equalities
\begin{equation}\label{eq:pm-ak}
 \begin{aligned}
   \AA^+_{2k+1}&
      :=\{p\in \AA_{2k+1}\,;\,\lambda^{(2k)}(p)>0\}
       =\Sigma^{n-2k}_+\setminus \Sigma^{n-2k-1}, \\
   \AA^-_{2k+1}&
      :=\{p\in \AA_{2k+1}\,;\,\lambda^{(2k)}(p)<0\}
       =\Sigma^{n-2k}_-\setminus \Sigma^{n-2k-1} 
  \end{aligned}
\end{equation}
hold. 
If $n=2$ and $f:M^2\to \R^3$ 
is a wave front, then positive
(resp.\ negative) $\A_3$-points 
as in Example \ref{ex:wave} correspond to 
positive (resp.\ negative) swallowtails.

Let $X$ be a vector field of $M^n$ which vanishes 
at $p\in M^n$.
Take a local coordinate system $(U;x_1,\dots,x_n)$
at $p$ and write
\[
   X=\xi_1\frac{\partial}{\partial x_1}
             +\dots +\xi_n\frac{\partial}{\partial x_n}.
\]
Then a zero $p$ of $X$ is called {\it generic\/}
if the Jacobian of the map
\[
   U\ni q \longmapsto 
\bigl(\xi_1(q),\dots, \xi_n(q)\bigr)\in \R^n
\]
does not vanish at $q=p$. 
A vector field $X$ defined on $M^n$ is called {\it generic\/}
if all its zeros are generic.

\begin{definition}\label{def:X}
 Let
 $\varphi\colon{}TM^n\to\E$
 be a
 Morin homomorphism of depth $k$
 ($k=1,\dots,n$).
 A $C^\infty$-vector field $X$ defined on $M^n$ is called
 a {\it characteristic vector field\/} 
 of $\varphi$ 
 if it satisfies the following three conditions.
\begingroup
 \renewcommand{\theenumi}{(\roman{enumi})}
 \renewcommand{\labelenumi}{(\roman{enumi})}
 \begin{enumerate}
  \item\label{char:0}
       $X$ is a generic vector field on $M^n$
       which does not vanish at any point of $\Sigma^{n-1}$.
  \item\label{char:1}
       For each $j=n-k,\dots,n-1$, there exists a
       generic tangent vector field $X_j$ of 
       $\Sigma^{j}$ such that the equality $\phi(X)=\phi(X_j)$ 
       holds on $\Sigma^{j}$ and $X_j$ has no zeros on $\Sigma^{j-1}$.
\item\label{char:2}
     For each $\A_{l+1}$-point $p$
     ($l=1,\dots,k$)
     (namely, $p\in \Sigma^{n-l}\setminus \Sigma^{n-l-1}$)
     satisfying $\phi(X_p)$=0, 
     there exists a neighborhood
     $U$ of $p$ of $M^n$
     such that the restriction of $X$ to
     $U\cap \Sigma^{n-l+1}$ coincides with 
     $X_{n-l+1}$
     on $U\cap \Sigma^{n-l+1}$ (cf.\ Figure \ref{fig:zu}).
     Moreover, if $l$ is odd,
     $X$ points into $\Sigma^{n-l+1}_+$
     at $p\in \Sigma^{n-l}$.
\end{enumerate}
\endgroup
\end{definition}

\begin{remark}\label{rmk:def}
 Let $X$ be a characteristic vector field 
 on $M^n$. 
 If $k=n$, 
 then $\phi(X)$ must vanish at 
 each $\A_{n+1}$-point.
(In fact, since any null vector fields
 are tangent to $\Sigma^1$ at each 
 $\A_{n+1}$-point $p$,
 the property \ref{char:1} yields that
 $X_1$ points in the null direction  
 at $p$, and $X=X_1$ near $p$ on $\Sigma^1$
 by \ref{char:2}.) 
\end{remark}

In this section, we shall construct a
characteristic vector field, which will play
a crucial role in proving formula \eqref{eq:main} in 
the introduction:

\begin{proposition}\label{prop:charcaristic}
 Let $M^n$ be a compact oriented manifold, 
 and $\phi:TM^n\to \E$
 a Morin homomorphism.
 Suppose that $\E$ is oriented.
 Then, there exists a characteristic vector field defined
 on $M^n$ associated to $\phi$.
\end{proposition}

To prove the assertion, 
we prepare the following:

\begin{lemma}\label{lem:new-da}
 Let $M^n$ be a compact orientable
 manifold, and 
 $\varphi\colon{}TM^n\to\E$
 $(n\ge 1)$ 
 a Morin homomorphism of depth $k$
 $(k\ge 1)$ and $X$ a generic vector field
 on $\Sigma^{n-1}$ such that it does not have
 any zero on a compact subset $C(\subset \Sigma^{n-1})$.  
 {\rm(}Here we are not assuming that $\E$ is orientable.{\rm)}
 Then there exists a vector field $\tilde X$
 satisfying the following properties:
 \begin{enumerate}
  \item\label{char:0r}
       $\tilde X$ is a generic vector field on $M^n$
       which has no zeros on $\Sigma^{n-1}$.
  \item\label{char:1r}
       $\phi(\tilde X)=\phi(X)$ holds on $\Sigma^{n-1}$.
  \item\label{char:2r}
       $\tilde X=X$ on $C$. 
 \end{enumerate}
\end{lemma}

\begin{proof}
 We fix a Riemannian metric on $M^n$.
 Since $M^n$ is orientable,
 we can take $\norm$
 as a normal vector field
 defined on $\Sigma^{n-1}$.
 Taking $\delta$ to be sufficiently small,
 there exists a canonical diffeomorphism
 \[
    \exp:\Sigma^{n-1}\times [-\delta,\delta]
      \longrightarrow \overline{\n_\delta(\Sigma^{n-1})}
 \]
 such that $t\mapsto \exp(q,t)$ is the normal 
 geodesic of $M^n$ with arclength parameter
 starting from each $q\in \Sigma^{n-1}$ 
 in the direction
 $\norm$. 
 Here
 $\n_\delta(\Sigma^{n-1})$
 is the $\delta$-tubular neighborhood
 of $\Sigma^{n-1}$ in $M^n$.
 Then
 \[
    \tilde{\norm}(q,s):=\frac{\partial \exp(q,s)}{\partial s}
 \]
 gives a unit vector field defined on 
 $\n_\delta(\Sigma^{n-1})$
 as an extension of $\norm$.
 Take an open neighborhood $V$ of $C$
 as an open subset of $\Sigma^{n-1}$
 such that the closure $\overline V$ 
 of $V$ is compact
 and $X$ has no zeros on $\overline V$.
 Without loss of generality, we may assume that
 the normal vector $\norm$ is proportional to
 the null vector field on $\Sigma^{n-1}\setminus V$.
 Let 
 $\rho:\Sigma^{n-1} \to [0,1]$
 be a smooth function such that
 \[
   \rho(q)=
     \begin{cases}
      1 & (\mbox{if $q\in C$}), \\
      0 & (\mbox{if $q\not\in V$}). 
     \end{cases}
 \]
 Let $W$ be the vector field on
 $\n_\delta(\Sigma^{n-1})$ obtained via parallel transport of
 $X$ along each normal geodesic $s\mapsto \exp(q,s)$.
 We set
 \begin{equation}
   \tilde W(q,s):=W(q,s)+ 
    \biggl(s^2\rho(q)+\left(1-\rho(q)\right)\biggr)
    \tilde{\norm}(q,s),
 \end{equation}
 which is a vector field on $\n_\delta(\Sigma^{n-1})$.
 Then $\tilde W$ has no zeros on $\n_\delta(\Sigma^{n-1})$
 since $X$ has no zeros on $\n_\delta(\Sigma^{n-1})$.
 We then apply Lemma~\ref{lem:app} in the appendix
 by setting $K=\overline{\n_{\delta}(\Sigma^{n-1})}$ 
 and get a generic vector field $\tilde X$
 defined on $M^n$ such that $\phi(\tilde X)$
 coincides with $\phi(X)$ on $\Sigma^{n-1}$.
 It can be easily checked that $\tilde X$ is the 
 desired vector field.
\end{proof}

\begin{figure}[htb]
 \begin{center}
 \includegraphics[height=3cm]{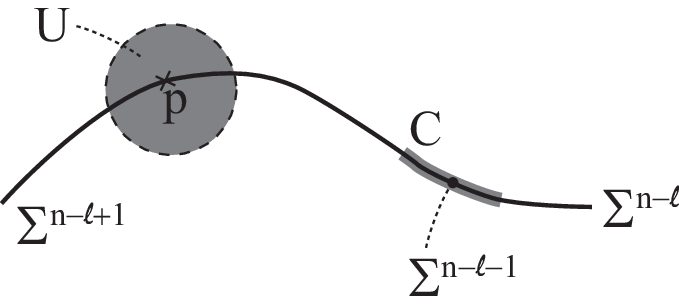}  
\caption{Proof of Proposition \ref{prop:charcaristic}}
\label{fig:zu}
 \end{center}
\end{figure}

\begin{proof}[Proof of Proposition \ref{prop:charcaristic}]
 We prove the assertion by induction of the 
 depth $k$ of the Morin homomorphism.
 So we firstly consider the case that $k=1$.
 Suppose that $n\ge 2$.
 Then $\Sigma^{n-1}$ is positive dimensional.
 We take a generic vector field $X$  on $\Sigma^{n-1}$
 and apply the
 previous lemma by setting $C$ to be
 the empty set.

 Next we consider the case that $n=1$.
 Let $p_1$,\dots, $p_m$ be $A_2$-points on $M^1$.
 Then we can take an extended null vector 
 field $\tilde \eta_j$ defined on a neighborhood $U_j$
 of $p_j$ which has no zeros on $\overline{V_j}(\subset U_j)$.
 We may assume that 
 the $V_j$'s are pairwise disjoint.
 Applying Lemma \ref{lem:app} by 
 setting $K=\overline{V_1}\cup \cdots \cup \overline{V_m}$, 
 we can get a generic  vector field $X$ on $M$ such that
 $X=\tilde \eta_j$
 on $V_j$ for $j=1$, $2$,\dots, $m$, 
 which gives the properties \ref{char:0}--\ref{char:2}.

 We now assume that the assertion holds for $k-1$.
 We fix an inner product  $\inner{~}{~}$
 on $\E$. 
 As in the proof of Theorem \ref{thm:induction1},
 we can take a unit section $\vect{u}$
 such that the induced bundle $\hat\E$ 
 defined by \eqref{eq:quotient}
 is the subbundle of $\E$ which is orthogonal
 to $\vect{u}$.
 Using the assumption of induction,
 there exists a vector field $X$ satisfying the 
 properties \ref{char:0}--\ref{char:2}
 on $\Sigma^{n-1}$ for $\hat \E$
 by taking $\dot \lambda$ to be a 
 $\hat \phi$-function.
 Let $\delta$ be a small positive number
 such that $X$ has no zeros on
 $C:=\overline{\N_\delta(\Sigma^{n-2})}$,
 where $\N_\delta(\Sigma^{n-2})$
 is a $\delta$-tubular neighborhood
 of $\Sigma^{n-2}$ in the Riemannian
 manifold $\Sigma^{n-1}$.
 We apply Lemma \ref{lem:new-da} for $X$
 (see Figure \ref{fig:zu}),
 and we get the vector field $\tilde X$
 satisfying the properties \ref{char:0r}--\ref{char:2r}.
 Then $\tilde X$ satisfies \ref{char:0}, \ref{char:1} and \ref{char:2}
 by construction.
 The property \ref{char:1}
 follows from \ref{char:1r}.
\end{proof}

\section{Adapted coordinate systems and the two dimensional case}
\label{sec:even2}

\begin{proposition}\label{thm:coord}
 Let $\phi:TM^n\to \E$ be a Morin homomorphism
 on an $n$-manifold $M^n$.
 Then there exists a local coordinate 
 system $(U;x_1,\dots,x_n)$   
 centered at an $\A_{k+1}$-point $p\in M^n$  $(k\ge 1)$
 satisfying the following properties{\rm:}
 \begin{enumerate}
  \item\label{coord:2}
       For each $j=1$,\dots, $k-1$, 
       the restriction of 
       $\{\partial/ \partial x_{j+1},
               \dots,\partial/ \partial x_{n}\}$
       spans the tangent space of $\Sigma^{n-j}$ at $p$,
  \item\label{coord:3} 
       $\partial/\partial x_{k}$ gives an
       extended null vector field 
       on $U$.
 \end{enumerate}
\end{proposition}

The local coordinate system $(x_1,\dots,x_n)$ given 
in Proposition \ref{thm:coord}
is called a {\it $\phi$-adapted coordinate system\/} at $p$.

\begin{proof}
 Let $\lambda:U\to \R$ be a $\phi$-function 
 defined on a local coordinate neighborhood 
 $(U;y_1,\dots,y_n)$ of $p$.
 Let $\tilde \eta$ be an extended null vector field
 on $U$ and $\eta$ its restriction 
 to $\Sigma^{n-1}\cap U$.
 Then by \ref{item:ojm:2} of Definition \ref{fact:ojm},
 we have that
 \[
  \frac{\partial(\lambda,\dot \lambda,\dots,\lambda^{(k-1)})}
       {\partial(y_1,\dots,y_k)}\ne 0.
 \]
 By the implicit function theorem,
 there exist functions
 $y_j(y_{k+1},\dots,y_n)$ ($j=1,\dots,k$)
 such that
 $y_j(0,\dots,0)=0$ and
 \[
       \lambda^{(j-1)}(y_1(\hat y),\dots,y_k(\hat y),\hat y)=0
             \qquad (j=1,\dots,k),
 \]
 where 
 $\hat y=(y_{k+1},\dots,y_n)$ and $\lambda^{(0)}:=\lambda$.
 So if we set
 \[
       x_1:=\lambda,\,\, x_2:=\dot \lambda,\,\,
           \dots,\,\, x_k:=\lambda^{(k-1)}, \quad x_l:=y_l 
             \qquad (l=k+1,\dots,n),
 \]
 then $\psi:=(x_1,\dots,x_n)$ gives a new
 local coordinate system at $p$ satisfying the property 
 \ref{coord:2}.
 Then the restriction $\eta|_{\Sigma^{n-k}}$
 of the null vector field 
 is a tangent vector field of $\Sigma^{n-k+1}$
 along $\Sigma^{n-k}$,
 and can be written as
 \[
      \eta|_{\Sigma^{n-k+1}}=\sum_{j=k}^n c_j \partial_j,
 \]
 where $\partial_{j}:=\partial/\partial x_j$
 ($j=k,\dots,n$).
 Since $\eta$ is transversal to $\Sigma^{n-k}$ at $p$,
 the coefficient $c_k$ does not vanish.
 Let $\{g_t\}_{|t|<\epsilon}:V\to M^n$
 be the local $1$-parameter group of transformations
 generated by $\tilde \eta$, where $V(\subset U)$ 
 is a 
 neighborhood of $p$ in $M^n$ and $\epsilon>0$
 is a small positive number.
 Then 
 \[
    \Phi:(t_1,t_2,\dots,t_{k-1},t_k,t_{k+1},\dots,t_n)
      \mapsto g_{t_k}(\psi(t_1,t_2,\dots,t_{k-1},0,t_{k+1},\dots,t_n))
 \]
 gives a local diffeomorphism
 such that  the equalities
 \[
    d\Phi(\partial/\partial t_k)=\eta,\quad
          d\Phi(\partial/\partial t_l)=\partial/\partial x_l
           \qquad (l=k+1,\dots,n)
 \]
 hold, and they span
 the tangent space of $\Sigma^{n-k+1}$
 when $t_1=t_2=\cdots=t_{k}=0$.
 Thus the inverse map 
 $\Phi^{-1}$ gives the desired local coordinate system.
\end{proof}

Here we prove formula \eqref{eq:main} for $n=2$.
Although this formula was proved as a corollary of the
Gauss-Bonnet type formula
in \cite{SUY5} and \cite{SUY6}, our proof in
this section is new.

Let $X$ be a characteristic vector field associated to 
a Morin homomorphism $\phi:TM^{2}\to \E$ of depth 
at most $2$
on a compact oriented $2$-manifold,
and we assume that $\E$ is oriented.
Take a section $Y$ of $\E$ as $Y:=\phi(X)$.
Then the following assertion holds:
\begin{proposition}\label{prop:dim2_A}
 Let $Z(Y)$, $Z(X)$ be the set of zeros on $M^2$ of $Y$ and $X$,
 respectively, and let $Z(X_1)$  be the zeros on $\Sigma^1$
 of $X_1$ {\rm(}as in Definition \ref{def:X}{\rm)}.
 Then it holds that
 \begin{align}
  &Z(Y)\cap (M^2\setminus \Sigma^1)=Z(X), \label{eq:2d1} \\
  &Z(Y)\cap (\Sigma^1\setminus \Sigma^0)=Z(X_1)\subset \AA_2, 
         \label{eq:2d2}\\
  &Z(Y)\cap \Sigma^0=\AA_3.
  \label{eq:2d3}
 \end{align}
\end{proposition}

\begin{proof}
 Since $Y=\phi(X)$,  property \ref{char:0}
 in Definition~\ref{def:X}
 implies that $Z(X)\subset Z(Y)$.
 Since $\phi:T_{p}M^2\to \E_{p}$ is a linear isomorphism 
 when $p\in M^2\setminus \Sigma^1$,
 we have \eqref{eq:2d1}.
 Since $Z(X_1)\cap \Sigma^0$
 is the empty set,
 property \ref{char:1}
 of characteristic vector field
 yields
 \[
     Z(Y)\cap (\Sigma^1\setminus \Sigma^0)= Z(X_1).
 \]
 Since $Y=\phi(X_1)$ on $\Sigma^1$
 and $X_1$ is proportional to a 
 null vector  at each $\A_3$-point $p$, we obtain 
\eqref{eq:2d3}.
\end{proof}

When $n=2$, \eqref{eq:Y} reduces to
\begin{equation}\label{eq:dim2}
  \chi^{}_\E=
     \sum_{p\in M^2\setminus \Sigma^1}\ind_p(Y)+
     \sum_{p\in \AA_2}\ind_p(Y)+
     \sum_{p\in \AA_3}\ind_p(Y).
\end{equation}

\begin{proposition}\label{prop:dim2_B}
 The first term of the right-hand 
 side of \eqref{eq:dim2} satisfies
 \begin{equation}\label{eq:1-st}
  \sum_{p\in M^2\setminus \Sigma^1}\ind_p(Y)
   =\chi(M^2_+)-\chi(M^2_-).
 \end{equation}
\end{proposition}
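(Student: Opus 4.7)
My plan is to compare the indices of $Y$ and $X$ pointwise on $M^2\setminus\Sigma^1$, and then apply the Poincar\'e--Hopf theorem to the compact surfaces with boundary $M^2_+$ and $M^2_-$.

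By \eqref{eq:2d1}, the zeros of $Y=\phi(X)$ in $M^2\setminus\Sigma^1$ coincide with the zeros $Z(X)$ of $X$. At each such zero $p$, the bundle map $\phi$ is a fibrewise isomorphism on a neighborhood of $p$, and by \eqref{eq:lambda} it is represented, in orientation-preserving local trivializations of $TM^2$ and $\E$, by a matrix-valued function $A(x)$ with $\sgn(\det A(x))=\sgn\lambda(x)$. A straight-line homotopy through invertible matrices from $A(x)$ to the constant $A(p)$ gives $\ind_p(Y)=\sgn(\lambda(p))\,\ind_p(X)$, so, partitioning $Z(X)\cap(M^2\setminus\Sigma^1)$ according to the sign of $\lambda$,
\[
\sum_{p\in M^2\setminus\Sigma^1}\ind_p(Y)
=\sum_{p\in Z(X)\cap M^2_+}\ind_p(X)-\sum_{p\in Z(X)\cap M^2_-}\ind_p(X).
\]

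Next, view $M^2_+$ and $M^2_-$ as compact surfaces whose common boundary is $\Sigma^1$. By property \ref{char:0}, $X$ has no zeros on $\Sigma^1$, so all zeros of $X$ lie in the interiors of $M^2_\pm$. By the convention adopted after Proposition~\ref{prop:k_odd}, the normal $\norm_1$ to $\Sigma^1$ in $M^2$ points into $M^2_+$; property \ref{char:3} then asserts that $ds^2(X,\norm_1)\ge 0$ along $\Sigma^1$ with strict positivity off $\n_\delta(\Sigma^0)$. Hence $X$ is weakly inward-pointing on $\partial M^2_+$ and weakly outward-pointing on $\partial M^2_-$, and fails to be transverse to $\Sigma^1$ only near the finite set $\Sigma^0=A_3$. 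Since $X$ is bounded away from zero on the compact set $\Sigma^1$, adding a small multiple of a smooth extension of $\norm_1$, cut off outside a thin collar of $\Sigma^1$ in $M^2$, yields a perturbation $\tilde X$ that agrees with $X$ outside this collar, is strictly transverse to $\Sigma^1$ with the correct sign on each side, and has the same zero set and indices as $X$. The Poincar\'e--Hopf theorem for manifolds with boundary then gives
\[
\sum_{p\in Z(X)\cap M^2_\pm}\ind_p(X)=\chi(M^2_\pm),
\]
and combining this with the previous display proves \eqref{eq:1-st}.

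The main subtle point is the tangency of $X$ to $\Sigma^1$ at the $\A_3$-points of $\Sigma^0$, which blocks a direct appeal to the transverse boundary version of Poincar\'e--Hopf; the local perturbation above handles this, and is safe precisely because property \ref{char:0} forces $X$ to be nonzero on all of $\Sigma^1$.
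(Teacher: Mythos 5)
Your proof is correct and takes essentially the same route as the paper: the pointwise comparison $\ind_p(Y)=\sgn(\lambda(p))\,\ind_p(X)$ followed by the Poincar\'e--Hopf theorem on $M^2_\pm$ using property \ref{char:3} of Definition~\ref{def:X}, the only cosmetic difference being that the paper handles the weak transversality of $X$ to $\Sigma^1$ near $\Sigma^0$ by shrinking the domains to $\overline{M^2_\pm\setminus\n_\delta(\Sigma^1)}$ rather than perturbing $X$ in a collar. One small point to make explicit: on $\partial M^2_+$ your field points inward, so you should either pass to $-X$ (which preserves indices in even dimension, as the paper does) or note that the inward-pointing version of Poincar\'e--Hopf yields $\chi(M^2_+)-\chi(\Sigma^1)=\chi(M^2_+)$ because $\Sigma^1$ is a closed $1$-manifold.
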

\begin{proof}
 Let $p$ be in $Z(Y)\setminus\Sigma^1$, 
 and $\lambda$ be an oriented $\phi$-function on 
 a neighborhood of $p$.
 We denote by $\sgn(\lambda(p))$ the sign of the
 function $\lambda$ at the
 point $p$.
 Since $\sgn(\lambda(p))=1$ (resp.\ $\sgn(\lambda(p))=-1$)
 if $\phi_p:T_pM^2\to \E_p$ is orientation preserving 
 (resp.\ orientation reversing),  we have that 
 \[
   \ind_p(Y)=
    \sgn\bigl(\lambda(p)\bigr)\ind_p(X)\qquad 
           (p\in M^2\setminus \Sigma^1).
 \]
 We set
 \[
   \bar M^2_+(\delta):=
            \overline{M^2_+\setminus \N_\delta(\Sigma^1)},
   \quad
   \bar M^2_-(\delta):=\overline{M^2_-\setminus \N_\delta(\Sigma^1)}
   \qquad (\delta>0),
 \]
 where $\N_{\delta}(\Sigma^1)$ 
 is the $\delta$-tubular neighborhood of $\Sigma^1$ as in 
 the proof of Lemma~\ref{lem:new-da}, and 
 the overline means the closure operation. 
 If we choose $\delta$ sufficiently small, then
 $Z(Y)\cap (M^2\setminus \Sigma^1)$ is contained
 in $\bar M^2_+(\delta)\cup \bar M^2_-(\delta)$
 and $\bar M^2_+(\delta)$ (resp.\ $\bar M^2_-(\delta)$)
 has the same homotopy type
 as $M^2_+$ (resp.\ $M^2_-$).
 In particular, the following identity holds
 \begin{equation}\label{eq:YX}
     \sum_{p\in M^2\setminus \Sigma^1} \ind_p(Y)=
           \sum_{p\in \bar M^2_+(\delta)} \ind_p(X)-
	   \sum_{p\in \bar M^2_-(\delta)} \ind_p(X).
 \end{equation}
 Here, $-X$ (resp.\ $X$) is an outward vector 
 of $\bar M^2_+(\delta)$ (resp.\ $\bar M^2_-(\delta)$)
 by property \ref{char:2} of Definition~\ref{def:X}
 of the characteristic vector field $X$.
 Since the operation $X\mapsto -X$ is orientation preserving,
 applying the Poincar\'e-Hopf index formula (cf.\ \cite{Mi}),
 we have that
 \[
   \chi(M^2_+)= \chi(\bar M^2_+(\delta))
      =\sum_{p\in \bar M^2_+(\delta)} \ind_p(-X)
      =\sum_{p\in \bar M^2_+(\delta)} \ind_p(X)
               =\sum_{p\in M^2_+} \ind_p(X).
 \]
 Similarly, we can also show that 
 \[
   \chi(M^2_-)=\sum_{p\in M^2_-} \ind_p(X),
 \]
 which proves the assertion.
\end{proof}
\begin{proposition}\label{prop:dim2_add}
 The second term of the right-hand 
 side of \eqref{eq:dim2} satisfies
 \begin{equation}\label{eq:1-stadd}
  \sum_{p\in \AA_2}\ind_p(Y)   =0.
 \end{equation}
\end{proposition}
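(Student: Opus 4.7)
The plan is to reduce the identity to an application of the Poincar\'e-Hopf theorem for the tangent vector field $X_1$ on the closed orientable $1$-manifold $\Sigma^1$, whose Euler characteristic is zero. By \eqref{eq:2d2}, the zeros of $Y$ lying on $A_2$ coincide with the zeros of $X_1$ in $A_2$; and since $X_1=\norm_0$ on $\Sigma^0=A_3$ by construction, $X_1$ does not vanish on $\Sigma^0$. Hence $Z(X_1)\subset A_2$, and the sum in question equals $\sum_{p\in Z(X_1)}\ind_p(Y)$.

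The key step would be a local calculation at each $p\in Z(X_1)$ showing that $\ind_p(Y)=\ind_p(X_1)$, the latter index being taken for $X_1$ regarded as a tangent vector field on $\Sigma^1$. First I would choose coordinates $(u,v)$ around $p$ with $\Sigma^1=\{v=0\}$ and $\tilde\eta=\partial_v$, together with a local frame $\{e_1,e_2\}$ of $\E$ with $e_1=\phi(\partial_u)$. Since $\phi(\partial_v)$ vanishes on $\Sigma^1$, one can write $\phi(\partial_v)=v(Ae_1+Be_2)$ with $B(p)$ sharing the sign of $\dot\lambda(p)$, in particular $B(p)\ne 0$. Writing $X=a\,\partial_u+b\,\partial_v$ gives $Y=(a+vbA)e_1+vbB\,e_2$, and a short Jacobian computation yields
\[
\ind_p(Y)=\sgn\bigl(a_u(p)\,b(p)\,B(p)\bigr).
\]
Since $X_1$ is constructed tangent to $\Sigma^1$ with $\phi(X_1)=\phi(X)$ on $\Sigma^1$ by property~\ref{char:1}, one has $X_1=a(u,0)\,\partial_u$, hence $\ind_p(X_1)=\sgn(a_u(p))$.

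The main obstacle will be the sign bookkeeping: verifying that $b(p)B(p)>0$. The strategy is to use that $\norm_1$ is oriented to point into $M^2_+=\{vB>0\}$, so $\norm_1(p)$ is positively proportional to $\sgn(B(p))\,\partial_v$; by condition~\eqref{eq:diamond}, $\norm_1$ is a multiple of $\eta=\partial_v$ away from $\Sigma^0$, consistent with this sign. Property~\ref{char:3} then gives $0<ds^2(X,\norm_1)|_p$, a positive multiple of $b(p)\,\sgn(B(p))$, which forces $\sgn(b(p))=\sgn(B(p))$. Therefore $\ind_p(Y)=\ind_p(X_1)$ at every $p\in Z(X_1)$, and applying Poincar\'e-Hopf to $X_1$ on the closed orientable $1$-manifold $\Sigma^1$ yields
\[
\sum_{p\in A_2}\ind_p(Y)=\sum_{p\in Z(X_1)}\ind_p(X_1)=\chi(\Sigma^1)=0.
\]
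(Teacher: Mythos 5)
Your proposal is correct and follows essentially the same route as the paper's proof: the same adapted coordinates and frame (the factorization $\phi(\partial_v)=v(Ae_1+Be_2)$ is the paper's preparation-theorem step in slightly different clothing), the same Jacobian computation giving $\ind_p(Y)=\sgn(a_u\,b\,B)$, the same use of property \ref{char:3} to fix the sign $b(p)B(p)>0$, and the same conclusion via Poincar\'e--Hopf on the closed $1$-manifold $\Sigma^1$.
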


\begin{proof}
 We fix $p$ in $Z(Y)\cap (\Sigma^1\setminus\Sigma^0)$.
 Then $p$ is an $\A_2$-point.
 Let $(U;x_1,x_2)$ be a
 $\varphi$-adapted coordinate system as in Proposition~\ref{thm:coord}
 (for $n=2$ and $k=2$)
 around $p$ which is compatible with
 the orientation of $M^2$.
 Then by \ref{coord:3} of Proposition~\ref{thm:coord},
 \[
    \tilde \eta:=\partial/\partial x_1
 \]
 gives an extended null vector field on $U$.
 Let $\mu$ be an orientation (i.e. a non-vanishing section
 of the determinant bundle of $\E^*$ which is 
 compatible with the
 orientation
 of $\E$ defined on $M^2$) of $\E$, and set 
 \[
    \lambda := \mu\left(
                    \varphi\left(\frac{\partial}{\partial x_1}\right),
                    \varphi\left(\frac{\partial}{\partial x_2}\right)
                  \right).
 \]
 Then $\lambda$ is an oriented 
 $\varphi$-function with respect to the
 orientations of $\E$ and $M^2$.
 Since 
 $\partial/\partial x_1$ is an extended 
 null vector field, 
 $\varphi(\partial/\partial x_1)$ vanishes on $\Sigma^1\cap U=\{\lambda=0\}$.
 Then
 by the well-known preparation theorem
 for $C^\infty$-functions, 
 there exists a section $\vect{e}_1$ of $\E$ 
 such that
 $\phi(\partial/\partial x_1)=\lambda \vect{e}_1$
 On the other hand, we set
 $\vect{e}_2:=\phi(\partial/\partial x_2)$.
 Then $\{\vect{e}_1,\vect{e}_2\}$ gives  
 a frame field of $\E$ on $U$ which is compatible 
 with the orientation
 of $\E$.
 In fact,
 \[
    \lambda =  \mu\left(
                 \varphi\left(\frac{\partial}{\partial x_1}\right),
                 \varphi\left(\frac{\partial}{\partial x_2}\right)
                \right)
            =\mu\bigl(\lambda\vect{e}_1,\vect{e}_2\bigr)
            =\lambda\mu(\vect{e}_1,\vect{e}_2),
 \]
 and hence $\mu(\vect{e}_1,\vect{e}_2)=1$.
 We set
 \[
     X=
        \xi_1 \frac{\partial }{\partial x_1}+
        \xi_2 \frac{\partial }{\partial x_2}
        \qquad\text{and}\qquad
     Y = \alpha_1 \vect{e}_1 + \alpha_2\vect{e}_2.
 \]
 Then it holds that
 \[
      \alpha_1 = \lambda \xi_1 ,\qquad \alpha_2 = \xi_2.
 \]
 Since 
 $\lambda$ vanishes on $\Sigma^{1}$
 and since
 $\partial/\partial x_2$ spans $T_p\Sigma^1$
 (cf.\ \ref{coord:2} of Proposition~\ref{thm:coord}),
 we have $\lambda(p)=\lambda_{x_2}(p)=0$,
 where $\lambda_{x_2}:=\partial\lambda/\partial x_2$.
 In particular, the equality
 $(\alpha_1)_{x_2}=\partial \alpha_1/\partial x_2=0$
 holds at $p$. 
 Since the equalities
 $(\alpha_1)_{x_1}=\lambda_{x_1} \xi_1=\dot\lambda\xi_1$
 also hold at $p$, we have that
 \begin{align*}
    \sgn\left(
    \det
      \pmt{
          (\alpha_1)_{x_1} & (\alpha_1)_{x_2}\\
          (\alpha_2)_{x_1} & (\alpha_2)_{x_2} \\
      }\right)&=
    \sgn\biggl(
    \dot \lambda \xi_1 (\alpha_2)_{x_2}
    \biggr)\\
      &=
    \sgn
    \biggl(
    \dot \lambda \xi_1 (\xi_2)_{x_2}
    \biggr)
      =
    \ind_p(X_1)\sgn\biggl(\dot \lambda \xi_1 \biggr).
 \end{align*}
 Here, we used the relation 
 $\ind_p(X_1)=\sgn(\xi_2)_{x_2}$.
 In fact, by \ref{coord:2} of Proposition~\ref{thm:coord},
 one can parametrize $\Sigma^1$ around $p$ as
 \[
      \Sigma^1\cap U=\{(x_1,x_2)=(f(t),t)\,;\,t\in I\},
 \]
 where $I$ is a sufficiently small interval including $0$
 and $f$ is a smooth function on $I$ such that $df(0)/dt=0$.
 That is, $t$ can be taken as a local coordinate system
 of $\Sigma^1$.
 Then there exists a smooth function 
 $\hat\xi:I\to \R$ such that
  \[
       X_1=\hat\xi\frac{d}{dt}
        =\hat\xi\left(\frac{df}{dt}\frac{\partial}{\partial
             x_1}+\frac{\partial}{\partial x_2}\right).
  \]
 The condition \ref{char:1} of Definition \ref{def:X} yields 
 that 
 \[
      \hat\xi = \xi_2,\qquad
      \frac{d\hat\xi}{dt}
       = \frac{df}{dt}\frac{\partial\xi_2}{\partial x_1}+
         \frac{\partial\xi_2}{\partial x_2}.
 \]
 Since $df(0)/dt=0$, we have
 \[
     \ind_p(X_1) = \sgn_{t=0}\left(\frac{d\hat\xi}{dt}\right)
                 = \sgn_p\left(
                           \frac{\partial\xi_2}{\partial x_2}
                          \right).
 \]

 Since $p\not \in \N_\delta(\Sigma^0)$
 for sufficiently small $\delta$,
 the characteristic vector field 
 $X$ points in the direction of $M^2_+=\{\lambda>0\}$ at $p$.
 So the equality
 \[
   \sgn(\xi_1)=
            \sgn(\dot \lambda)
 \]
 holds at $p$.
 Thus 
 $\dot \lambda(p) \xi_1(p)>0$ and
 \[
    \ind_p(Y)=\ind_p(X_1).
 \]
 Since $Z(X_1)\subset \AA_2$ 
 and
 $\chi(\Sigma^1)=0$,
 applying the Poincar\'e-Hopf index formula for 
 the vector field $X_1$ on
 $\Sigma^1$, we get the assertion.
\end{proof}
By \eqref{eq:dim2}, Proposition~\ref{prop:dim2_B} and
Proposition~\ref{prop:dim2_add},
formula \eqref{eq:main} 
follows immediately from the following assertion:
\begin{proposition}\label{prop:dim2_C}
 Let $p$ be an arbitrarily given $\A_3$-point.
 Then 
 \[
   \ind_p(Y)=
    \begin{cases}
      1 & (\mbox{if $p\in \AA_3^+$}), \\
     -1 & (\mbox{if $p\in \AA_3^-$}).
    \end{cases}
 \]
\end{proposition}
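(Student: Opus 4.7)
The plan is to mimic the local-coordinate computation in the proof of Proposition~\ref{prop:dim2_add} and to show that the Jacobian of the coordinate representation of $Y$ at $p$ has sign equal to $\sgn\bigl(\ddot\lambda(p)\bigr)$. First I would choose an oriented chart $(U;x_1,x_2)$ centered at $p$ in which $\tilde\eta=\partial/\partial x_1$ (flow-box theorem), set $\vect{e}_1$ by $\phi(\partial/\partial x_1)=\lambda\vect{e}_1$ (Hadamard's lemma applies since $\phi(\eta)=0$ on the regular hypersurface $\Sigma^1=\{\lambda=0\}$), and $\vect{e}_2:=\phi(\partial/\partial x_2)$; exactly as in the proof of Proposition~\ref{prop:dim2_add}, the relation $\phi^*\mu=\lambda\,\Omega$ makes $\{\vect{e}_1,\vect{e}_2\}$ an oriented local frame of $\E$ near $p$. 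Writing $X=\xi_1\partial/\partial x_1+\xi_2\partial/\partial x_2$, one obtains $Y=\zeta_1\vect{e}_1+\zeta_2\vect{e}_2$ with $\zeta_1=\lambda\xi_1$ and $\zeta_2=\xi_2$, so $\ind_p(Y)$ is the sign of the Jacobian determinant of $(\zeta_1,\zeta_2)$ at~$p$.

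Since $p$ is an $\A_3$-point, $\lambda(p)=\dot\lambda(p)=0$ and $\ddot\lambda(p)\ne 0$, which forces $\lambda_{x_2}(p)\ne 0$. The singular curve $\Sigma^1$ is tangent to $\eta_p$ at $p$, so it is parametrized locally as $\gamma(x_1)=\bigl(x_1,h(x_1)\bigr)$ with $h(0)=h'(0)=0$, and a routine double differentiation of $\lambda(\gamma(x_1))=0$ yields
\[
   h''(0)=-\frac{\ddot\lambda(p)}{\lambda_{x_2}(p)}.
\]
By Proposition~\ref{prop:dim2_A} we have $\phi(X_p)=0$ for $p\in A_3$, so property \ref{char:2} of Definition~\ref{def:X} applies and $X|_{\Sigma^1}$ is tangent to $\Sigma^1$ near $p$. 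This gives the relation $\xi_2(\gamma(x_1))=\xi_1(\gamma(x_1))\,h'(x_1)$; in particular $\xi_2(p)=0$ and, differentiating once in $x_1$ at the origin, $(\xi_2)_{x_1}(p)=\xi_1(p)h''(0)$. Moreover $\xi_1(p)\ne 0$, since $X$ does not vanish on $\Sigma^1$ by property \ref{char:0}.

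Combining these facts with $\lambda(p)=\dot\lambda(p)=0$, the first row of the Jacobian reduces to $\bigl(0,\lambda_{x_2}(p)\xi_1(p)\bigr)$, and the determinant collapses to
\[
   \det\pmt{0 & \lambda_{x_2}(p)\xi_1(p)\\ (\xi_2)_{x_1}(p) & (\xi_2)_{x_2}(p)}
      =-\lambda_{x_2}(p)\xi_1(p)(\xi_2)_{x_1}(p)=\xi_1(p)^2\,\ddot\lambda(p).
\]
As $\xi_1(p)^2>0$, the index equals $\sgn\bigl(\ddot\lambda(p)\bigr)$, which by Definition~\ref{def:pm-ak} is $+1$ on $A_3^+$ and $-1$ on $A_3^-$. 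The one subtle point, and the place where a sign error would most easily creep in, is verifying that $\{\vect{e}_1,\vect{e}_2\}$ is positively oriented (so that the Jacobian sign is really $\ind_p(Y)$ rather than its negative); this is taken care of by $\phi^*\mu=\lambda\,\Omega$ exactly as in the $\A_2$-case. Everything else is a direct Taylor expansion.
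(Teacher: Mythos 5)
Your argument is correct and follows essentially the same route as the paper's proof: adapted coordinates with the extended null vector field as a coordinate direction, a local frame $\{\vect{e}_1,\vect{e}_2\}$ of $\E$ obtained from the division lemma and shown to be positively oriented via $\phi^*\mu=\lambda\,\Omega$, and a Jacobian computation reduced by the tangency of $X$ to $\Sigma^1$ and the vanishing of $\lambda$ and $\dot\lambda$ at $p$. The only differences are cosmetic: you take $\tilde\eta=\partial/\partial x_1$ rather than $\partial/\partial x_2$, and you extract the cross-derivative $(\xi_2)_{x_1}(p)$ from a graph parametrization of $\Sigma^1$ instead of differentiating the identity $d\lambda(X)=0$ along $\Sigma^1$; both yield $\ind_p(Y)=\sgn\bigl(\ddot\lambda(p)\bigr)$.
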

\begin{proof}
 We take a $\varphi$-adapted coordinate system
 $(U;x_1,x_2)$ centered at $p$
 which is compatible with the orientation of $M^2$.
 In particular, 
 $\tilde\eta:=\partial/\partial x_2$ is 
 an extended null  vector field on $U$, and
 $(\partial/ \partial x_2)_p\in T_p\Sigma^1$.
 Let $\mu$ be a local orientation of $\E$, and let
 $\lambda:=\mu\bigl(\varphi(\partial/\partial x_1),
                    \varphi(\partial/\partial x_2)\bigr)$.
 We set
 $\vect{e}_1:=\phi\left({\partial}/{\partial x_1}\right)$.
 Since $\phi(\tilde \eta)$ 
 vanishes on $\Sigma^1$,
 there exists a section $\vect{e}_2$ of $\E$ on $U$ such that
 $\phi(\partial/\partial x_2)=\phi(\tilde \eta)=\lambda \vect{e}_2$.
 Since 
 \[
    \lambda =  \mu\left(
                 \varphi\left(\frac{\partial}{\partial x_1}\right),
                 \varphi\left(\frac{\partial}{\partial x_2}\right)
                \right)
            =\mu\bigl(\vect{e}_1,\lambda\vect{e}_2\bigr)
            =\lambda\mu(\vect{e}_1,\vect{e}_2),
 \]
 we have  $\mu(\vect{e}_1,\vect{e}_2)=1$, which implies that
 $\{\vect{e}_1,\vect{e}_2\}$ forms a frame field of $\E$
 compatible with the orientation of $\E$.
 We set
 \[
    X= \xi_1 \frac{\partial }{\partial x_1}+
       \xi_2 \frac{\partial }{\partial x_2}.
 \]
 By \ref{char:2} and \ref{char:0} of Definition~\ref{def:X},
 $X_p\in T_p\Sigma^1$ and $X_p \ne 0$, and hence we have
 $\xi_1(p)=0$ and $\xi_2(p)\ne 0$.
 We now set
 \begin{equation}\label{eq:Ycomp1}
   Y=\alpha_1\vect{e}_1+
     \alpha_2\vect{e}_2.
 \end{equation}
 Then it holds that 
 $\alpha_1=\xi_1$ and $\alpha_2=\lambda \xi_2$.
 By \ref{char:2} of Definition~\ref{def:X},
 $X$ is tangent to $\Sigma^1$ near $p$.
 Since $\lambda$ vanishes along $\Sigma^1$,
 it holds that
 \[
    0=d\lambda(X)=\lambda_{x_1}\xi_1+\dot \lambda \xi_2
 \]
 on a sufficiently small neighborhood $p$ in $\Sigma^1$,
 where we used the fact that $\lambda_{x_2}=\dot \lambda$
 (cf.\ \ref{coord:3} of Proposition \ref{thm:coord}). 
 Since $d\lambda(X)$ vanishes along $\Sigma^1$
 and $\partial/\partial x_2\in T\Sigma^1$ at $p$,
 the fact
 $\xi_1(p)=\dot \lambda(p)=0$ yields that
 the equalities
 \begin{align*}
   0&=\frac{\partial d\lambda(X)}{\partial x_2} 
     =\lambda_{x_1x_2}\xi_1+\lambda_{x_1}(\xi_1)_{x_2}
         +\ddot \lambda \xi_2+\dot \lambda (\xi_2)_{x_2}\\
    &=\lambda_{x_1}(\xi_1)_{x_2}+\ddot \lambda \xi_2
 \end{align*}
 hold at $p$.
 Since $d\lambda(p)\ne 0$ and 
 $\lambda_{x_2}(p)=\dot \lambda(p)=0$,
 we can conclude that $\lambda_{x_1}(p)\ne 0$. 
 In particular, we have that
 \[
   (\xi_1)_{x_2}(p)=
     -\frac{\ddot \lambda(p) \xi_2(p)}{\lambda_{x_1}(p)}.
 \]
 Using the facts 
 $\lambda_{x_2}(p)=\dot \lambda(p)=0$,
 we have that
 \begin{align*}
  \ind_p(Y)
  &=\sgn
    \left(\det\pmt{(\xi_1)_{x_1}(p) & (\xi_1)_{x_2}(p) \\ 
              \lambda_{x_1}(p) \xi_2(p)
              & \xi_2(p)\lambda_{x_2}(p)} 
    \right)\\
  &=\sgn
        \left(
         \det\pmt{(\xi_1)_{x_1}(p) & (\xi_1)_{x_2}(p) \\ 
                \lambda_{x_1}(p) \xi_2(p) & 0} 
        \right)\\
  &=-\sgn\left(
            \xi_2(p)\lambda_{x_1}(p)\left(
             -\frac{\ddot \lambda(p) \xi_2(p)}{\lambda_{x_1}(p)}\right)
            \right)
   =\sgn\biggl(\xi_2(p)^2 \ddot \lambda(p)\biggr). 
 \end{align*}
 Since the sign of an $\A_3$-point
 coincides with the sign of $\ddot \lambda$, 
 Proposition \ref{prop:dim2_C} is proved.
\end{proof}

\section{The proof of the index formula}\label{sec:even4}

In this section, we prove 
our formula \eqref{eq:main} for $n$-manifolds ($n=2m\ge 4$).

Let $M^n$ be an oriented manifold, 
and $X$ a characteristic vector field associated to
a Morin homomorphism $\phi:TM^n\to \E$.
Suppose that $\E$ is oriented. 
Let $(U;x_1,\dots,x_n)$ be a $\phi$-adapted  coordinate system 
centered at an $\A_{2}$-point $p\in M^n$  
{\rm(}cf.\ Proposition \ref{thm:coord}{\rm)},
which is compatible with the orientation of 
$M^n$.
Suppose that $Y:=\varphi(X)$ vanishes at $p$.
Then $X$ has an expression
\begin{equation}\label{eq:x-component}
    X=\xi_1\frac{\partial}{\partial x_1}+\dots
              + \xi_n\frac{\partial}{\partial x_n}.
\end{equation}
By a property of $\phi$-adapted  coordinate systems,
\[
   \eta:=\frac{\partial}{\partial x_1}
\]
gives a null vector field.
By \ref{char:1} in Definition~\ref{def:X}, $\xi_1\ne 0$ holds.
Moreover, the fact $\phi(X_p)=0$
yields that
\begin{equation}\label{eq:xik}
  \xi_{1}(p)\ne 0,\quad \xi_j(p)=0\qquad (j=2,\dots,n).
\end{equation}

\begin{lemma}\label{thm:coord2}
 It holds that
 \[
   \ind_p(Y)=\sgn
           \biggl(\xi_{1}(p)\dot \lambda(p)\biggr)\ind_p(X_{n-1}).
 \]
\end{lemma}

\begin{proof}
 Let $\mu$ be an orientation of $\E$, 
 and set
 \[
    \lambda:=\mu\left(
              \varphi\left(\frac{\partial}{\partial x_1}\right),\dots,
              \varphi\left(\frac{\partial}{\partial x_n}\right)
                \right),
 \]
 which is an oriented $\varphi$-function on a neighborhood of $p$.
 We set
 \[
    \vect{e}_j:=\phi(\partial/\partial x_j)
            \qquad (j=2,\dots,n).
 \]
 Since $\tilde\eta_1=\partial/\partial x_1$ is an extended null 
 vector field,
 by the preparation theorem 
 for $C^\infty$-functions, we can write
 $\phi(\partial/\partial x_{1})=\lambda \vect{e}_{1}$,
 where $\vect{e}_1$ is a local section defined on a neighborhood of $p$.
 Since
 \[
   \lambda=
        \mu\left(
          \phi\left(\frac{\partial}{\partial x_1}\right),\dots,
          \phi\left(\frac{\partial}{\partial x_n}\right)\right)
          =\lambda\mu(\vect{e}_1,\dots,\vect{e}_n),
 \]
 we have 
 $\mu(\vect{e}_1,\dots,\vect{e}_n)=1$.
 In particular, $\{\vect{e}_1,\dots, \vect{e}_n\}$
 gives an oriented 
 frame on the vector bundle $\E$ around $p$.
 So we can write
 \begin{equation}\label{eq:y-repr}
    Y=\alpha_1\vect{e}_1+\cdots+\alpha_n\vect{e}_n,\qquad
     \mbox{where}\qquad
    \alpha_j = \begin{cases}
	         \xi_j\qquad &(j\neq 1),\\
	        \lambda\xi_1\qquad &(j=1).
	      \end{cases}
 \end{equation}
 We set
 \[
   J:=\det(\alpha_{ij})_{i,j=1,\dots,n},
    \qquad \alpha_{ij}:=\frac{\partial \alpha_i}{\partial x_j}.
 \]
 If $J(p)\ne 0$, it holds that 
 \begin{equation}\label{eq:Jacobi}
  \ind_p(Y)=\sgn\bigl(J(p)\bigr).
 \end{equation}
 By \eqref{eq:xik} and \eqref{eq:y-repr}, we have that
 \begin{equation}\label{eq:lv0}
  (\alpha_1)_{x_1}(p)=\xi_1(p)\lambda_{x_1}(p)\ne 0,\qquad
   (\alpha_1)_{x_2}(p)=\cdots=(\alpha_1)_{x_n}(p)=0.
 \end{equation}
 Then \eqref{eq:lv0} implies that
 \begin{align*}
    J(p) &= \det
     \begin{pmatrix}
      (\alpha_1)_{x_1} &  0 & \dots & 0 \\
      (\alpha_2)_{x_1} &  (\alpha_2)_{x_2} & \dots & (\alpha_2)_{x_n} \\
      \vdots & \vdots & \ddots & \vdots \\
      (\alpha_n)_{x_1} &  (\alpha_n)_{x_2} & \dots & (\alpha_n)_{x_n} 
     \end{pmatrix}\\
       &= (\alpha_1)_{x_1} 
     \det    
     \begin{pmatrix}
      (\alpha_2)_{x_2} & \dots & (\alpha_2)_{x_n} \\
     \vdots & \ddots & \vdots \\
       (\alpha_n)_{x_2} & \dots & (\alpha_n)_{x_n} 
     \end{pmatrix}\\
     &= (\alpha_1)_{x_1} 
     \det    
     \begin{pmatrix}
      (\xi_2)_{x_2} & \dots & (\xi_2)_{x_n} \\
     \vdots & \ddots & \vdots \\
       (\xi_n)_{x_2} & \dots & (\xi_n)_{x_n} 
     \end{pmatrix},
 \end{align*} 
 because of \eqref{eq:y-repr}.
 Thus, by \eqref{eq:xik},
\eqref{eq:Jacobi} and \eqref{eq:lv0},
we have that
 \begin{align*}
    \ind_p(Y) &= \sgn\bigl(J(p)\bigr)
              = \sgn\bigl((\alpha_1)_{x_1}\bigr)
                \sgn\frac{\partial(\xi_2,\dots,\xi_n)}{\partial (x_2,\dots,x_n)}\\
              &= \sgn\bigl(\xi_1(p)\lambda_{x_1}(p)\bigr)
                \ind_p(X_{n-1}).
 \end{align*}
\end{proof}

We now prove the formula \eqref{eq:main}.
Let $M^n$ ($n=2m$)
be a compact oriented  $n$-manifold without boundary,
and $\phi:TM^n\to \E$ be a Morin homomorphism,
where $\E$ is an oriented vector bundle.
We fix a characteristic vector field $X$ 
as in the previous section 
(cf.\ Proposition~\ref{prop:charcaristic}).
Take a section $Y$ of $\E$ as 
\[
   Y:=\phi(X).
\]
We denote by $Z(X)$ and $Z(Y)$ the set of zeros of
$X$ and $Y$, respectively.
The following assertion can be proved
as in Proposition \ref{prop:dim2_A}.

\begin{proposition}\label{prop:dim4_A}
 Let $Z(X_{n-j})$ $(j=0,1)$
 be the set of zeros 
 for $X_j$, where $X_n=X$.
 Then it holds that
 \begin{align}
  &Z(Y)\cap (M^n\setminus \Sigma^{n-1})=Z(X), \label{eq:4d1} \\
  &Z(Y)\cap (\Sigma^{n-1}\setminus \Sigma^{n-2})=Z(X_{n-1}). \label{eq:4d2}
 \end{align}
\end{proposition}

By \eqref{eq:Y}, it is sufficient to show the following
assertion. 

\begin{theorem}
 The following identity holds
 \[
   \sum_{p\in M^n}\ind_p(Y)
    =\chi(M^{2m}_+)-\chi(M^{2m}_-)+
     \sum_{j=1}^m \biggl( \chi(\AA_{2j+1}^+)-\chi(\AA_{2j+1}^-)\biggr).
 \]
\end{theorem}

We prove the theorem
by induction on the dimension $n=2m$.
We have already shown that the formula holds for $m=1$
in Section~\ref{sec:even2}.
So we now assume that the formula \ref{eq:main}
holds for $m-1$, and will prove the case for $m$.
Let 
\[
  \hat \phi:T\Sigma^{n-1}\to \hat \E
\]
be the reduction.
Then it induces again the second reduction
$\hat{\hat \phi}:T\Sigma^{n-2}\to \hat{\hat \E}.$
Since $\E$ is oriented, we can take an oriented 
$\phi$-function $\lambda:M^n\to \R$ satisfying
\eqref{eq:lambda2}.
By Proposition \ref{prop:k_even}, 
$\ddot \lambda$ is an oriented 
$\hat {\hat \phi}$-function
of $\hat{\hat \E}$ defined on $\Sigma^{n-2}$.
Since the restriction of $X$ to $\Sigma^{n-2}$
is a characteristic vector field of $\Sigma^{n-2}$,
the induction assumption yields that
\[
  \sum_{p\in \Sigma^{n-2}}\ind_p(Y)=
  \sum_{j=1}^m \biggl( \chi(\AA_{2j+1}^+)-\chi(\AA_{2j+1}^-)\biggr).
\]
On the other hand, as in Proposition \ref{prop:dim2_B},
one can prove the following assertion:

\begin{proposition}\label{prop:dim4_B}
 The first term of the right-hand side of \eqref{eq:Y} 
 in the introduction satisfies
 \begin{equation}\label{eq:1-st-4}
    \sum_{p\in M^n\setminus \Sigma^{n-1}}
     \ind_p(Y)=\chi(M^n_+)-\chi(M^n_-).
 \end{equation}
\end{proposition}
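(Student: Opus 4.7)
The plan is to follow the proof of Proposition~\ref{prop:dim2_B} essentially verbatim, since its crux depends on $n$ only through the parity $(-1)^n$, which is still $+1$ for $n=4$. First I would identify the indices pointwise: at each zero $p\in Z(Y)\cap(M^4\setminus\Sigma^3)=Z(X)$ (by \eqref{eq:4d1}), the homomorphism $\phi_p\colon T_pM^4\to\E_p$ is an isomorphism whose determinant relative to positively oriented frames is a positive multiple of $\lambda(p)$. Taking oriented coordinates $(x_1,\dots,x_4)$ about $p$, setting $\vect{e}_j:=\phi(\partial/\partial x_j)$, and writing $X=\sum\xi_j\,\partial/\partial x_j$, we get $Y=\sum\xi_j\vect{e}_j$; passing from the frame $\{\vect{e}_j\}$ to a positively oriented frame of $\E$ flips the Jacobian determinant by $\sgn(\lambda(p))$, so
\[
  \ind_p(Y)=\sgn\bigl(\lambda(p)\bigr)\,\ind_p(X)\qquad(p\in M^4\setminus\Sigma^3).
\]

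Next I would introduce
\[
  \bar M^4_{\pm}(\delta):=\overline{M^4_{\pm}\setminus\n_\delta(\Sigma^3)}\qquad(\delta>0),
\]
and observe that by property~\ref{char:0} of Definition~\ref{def:X} the characteristic vector field $X$ does not vanish on $\Sigma^3$, hence for $\delta$ sufficiently small it does not vanish on $\n_\delta(\Sigma^3)$ either. Consequently every zero of $Y$ in $M^4\setminus\Sigma^3$ is contained in the disjoint compact manifold-with-boundary $\bar M^4_+(\delta)\cup\bar M^4_-(\delta)$, and each $\bar M^4_{\pm}(\delta)$ deformation retracts onto $M^4_{\pm}$. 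By property~\ref{char:3}, $ds^2(X,\norm_3)\ge 0$ along $\Sigma^3$, and by the sign convention preceding Definition~\ref{def:pm-ak} the normal field $\norm_3$ points into $M^4_+$; therefore $X$ is inward along $\partial\bar M^4_+(\delta)$ and outward along $\partial\bar M^4_-(\delta)$, i.e.\ $-X$ is outward on $\bar M^4_+(\delta)$ while $X$ itself is outward on $\bar M^4_-(\delta)$.

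Finally I would invoke the Poincar\'e--Hopf index theorem on manifolds with boundary (cf.\ \cite{Mi}). Since $n=4$ is even, $\ind_p(-X)=\ind_p(X)$ at every isolated zero, and hence
\[
  \chi(M^4_{\pm})=\chi\bigl(\bar M^4_{\pm}(\delta)\bigr)=\sum_{p\in\bar M^4_{\pm}(\delta)}\ind_p(X)=\sum_{p\in M^4_{\pm}}\ind_p(X);
\]
combining this with the sign identity of the first step yields \eqref{eq:1-st-4}. The proof presents no real obstacle: the only items to verify carefully are that the local Jacobian computation giving $\ind_p(Y)=\sgn(\lambda(p))\,\ind_p(X)$ is dimension-insensitive, and that the outward-pointing conventions for $\pm X$ on the two collars agree with the sign conventions fixed in Section~\ref{sec:char}. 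The substantive new work for $n=4$ enters only in the propositions that handle the contributions from $\Sigma^3\setminus\Sigma^2$, $\Sigma^2\setminus\Sigma^1$, $\Sigma^1$ and $A_5$, where the key lemma of Section~\ref{key} must be applied.
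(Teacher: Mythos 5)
Your proposal is correct and is essentially the paper's own argument: the paper proves this proposition simply by remarking that it follows ``like as'' Proposition~\ref{prop:dim2_B}, and your write-up is exactly that two-dimensional proof transplanted to $n=4$ (the pointwise identity $\ind_p(Y)=\sgn(\lambda(p))\ind_p(X)$, the collars $\bar M^4_\pm(\delta)$, the outward/inward conventions from \ref{char:3}, and Poincar\'e--Hopf with $\ind_p(-X)=\ind_p(X)$ in even dimension). No further comment is needed.
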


Now formula \eqref{eq:main} for the $2m$-dimensional case
reduces to the following assertion:

\begin{proposition}\label{prop:dim4_C}
 The second term 
 of the right-hand side of \eqref{eq:Y} 
 in the introduction satisfies
 \begin{equation}\label{eq:1-st-2}
  \sum_{p\in \Sigma^{n-1}\setminus \Sigma^{n-2}} \ind_p(Y)=0.
 \end{equation}
\end{proposition}

\begin{proof}
 We fix a point 
 $p\in \Sigma^{n-1}\setminus \Sigma^{n-2}$
 satisfying $Y_p=0$ arbitrarily.
 By property \ref{char:2} in Definition~\ref{def:X},
 there exists a vector field $X_{n-1}$ on $\Sigma^{n-1}$
 such that $Z(X_{n-1})=Z(Y)\cap (\Sigma^{n-1}\setminus \Sigma^{n-2})$.
 By Lemma \ref{thm:coord2},
 it holds that
 \[
    \ind_p(Y)=\ind_p(X_{n-1})
       \sgn\left(\dot \lambda(p)\xi_{1}(p)\right).
 \]
 By \ref{char:2} of Definition~\ref{def:X}, 
 $\xi_1 \partial/\partial x_1$ points into $M^n_+$ at $p$.
 Since $\partial/\partial x_1$ is an extended null vector
 field, $\dot \lambda\partial/\partial x_1$ points
 also into $M^n_+$ at $p$ (cf.\ Proposition 
 \ref{prop:k_odd}).
 Hence
 \[
        \sgn\bigl(\dot \lambda(p)\xi_1(p)\bigr)\ge 0,
 \]
 and
 $\ind_p(Y)=\ind_p(X_{n-1})$
 holds. 
 Since $Z(X_{n-1})=Z(Y)\cap (\Sigma^{n-1}\setminus \Sigma^{n-2})$
 and $\Sigma^{n-1}$ is odd dimensional,
 it holds that
 \[
   \sum_{p\in \Sigma^{n-1}\setminus \Sigma^{n-2}}
    \ind_p(Y)
       =
     \sum_{p\in \Sigma^{n-1}}
    \ind_p(X_{n-1})
       =
    \chi(\Sigma^{n-1})=0.
 \]
\end{proof}

\section{Applications}\label{sec:appl}

In this section, we shall give several applications of
the formula \eqref{eq:BW}:
recall that a $C^\infty$-map $f:M^{2m}\to N^{2m}$ 
between $2m$-manifolds is called a {\it Morin map\/} if
the corresponding bundle homomorphism $\varphi=df$
as in Example~\ref{ex:Morin}
admits only $\A_k$-singularities for  $k=2,\dots,2m+1$
(cf.\ Remark~\ref{rmk:Morin}).
\begin{theorem}[{\cite{N} and \cite{DF}}]\label{thm:quine}
 Let $M^{2m}$ and $N^{2m}$ be compact oriented $2m$-manifolds,
 and let $f:M^{2m}\to N^{2m}$ be a Morin map.
 Then it holds that
 \begin{equation}\label{eq:Quine}
    \deg(f)\chi(N^{2m})=
     \chi(M^{2m}_+)-\chi(M^{2m}_-)
     +\sum_{j=1}^m \chi(\AA_{2j+1}^+)-\chi(\AA_{2j+1}^-),
 \end{equation}
 where $\deg(f)$ is the topological degree
 of the map $f$, and
 $M^{2m}_+$ {\rm(}resp.\ $M^{2m}_-${\rm)}
 is the set of points at which
 the Jacobian of $f$  is 
 positive {\rm(}resp.\ negative{\rm)}.
\end{theorem}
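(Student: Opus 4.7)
The plan is to deduce Theorem~\ref{thm:quine} as a direct corollary of the general formula \eqref{eq:main} applied to the bundle homomorphism $\phi:=df:TM^{2m}\to \E_f$, where $\E_f:=f^*TN^{2m}$ is the pull-back bundle with its natural orientation inherited from $TN^{2m}$. This is precisely the setup of Example~\ref{ex:Morin}, which shows that $df$ is a Morin homomorphism whenever $f$ is a Morin map. Note however that by the convention of Remark~\ref{rmk:Morin}, a Morin-$k$-singularity of $f$ corresponds to an $\A_{k+1}$-singularity of the bundle homomorphism $df$; in particular, the indexing on the right-hand side of \eqref{eq:Quine} matches that of \eqref{eq:main}.

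First I would verify that, under these identifications, the partitions of $M^{2m}$ used in the two formulas coincide. As observed in Example~\ref{ex:Morin}, a $\phi$-function for $\phi=df$ is given by the Jacobian of $f$ with respect to oriented local coordinate systems on $M^{2m}$ and $N^{2m}$. Therefore the subsets $M^{2m}_\pm$ as defined in \eqref{eq:pm} coincide with the subsets on which the Jacobian of $f$ is positive (respectively negative), which matches the statement of Theorem~\ref{thm:quine}. The sets $A^\pm_{2j+1}$ of positive and negative $\A_{2j+1}$-points of $df$ are intrinsically defined via Definition~\ref{def:pm-ak} and so automatically agree with those appearing on the right-hand side of \eqref{eq:Quine}. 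Applying \eqref{eq:main} to $\phi=df$ yields
\begin{equation*}
   \chi^{}_{\E_f}=\chi(M^{2m}_+)-\chi(M^{2m}_-)+
    \sum_{j=1}^m \bigl( \chi(A_{2j+1}^+)-\chi(A_{2j+1}^-)\bigr).
\end{equation*}

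To complete the proof, I would identify $\chi^{}_{\E_f}$ with $\deg(f)\chi(N^{2m})$. By naturality of the Euler class, $e(\E_f)=e(f^*TN^{2m})=f^*e(TN^{2m})$, and since $M^{2m}$ is oriented and closed, the Euler number of $\E_f$ is obtained by pairing with the fundamental class:
\begin{equation*}
   \chi^{}_{\E_f}
   =\bigl\langle e(\E_f),[M^{2m}]\bigr\rangle
   =\bigl\langle f^*e(TN^{2m}),[M^{2m}]\bigr\rangle
   =\bigl\langle e(TN^{2m}),f_*[M^{2m}]\bigr\rangle
   =\deg(f)\,\chi(N^{2m}),
\end{equation*}
where the last equality uses $f_*[M^{2m}]=\deg(f)[N^{2m}]$ together with the Gauss--Bonnet--Chern theorem $\chi(N^{2m})=\langle e(TN^{2m}),[N^{2m}]\rangle$. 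Combining this with the formula above gives \eqref{eq:Quine}.

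There is no serious obstacle in this reduction; the only subtle point is the bookkeeping comparing the index shift in Remark~\ref{rmk:Morin} (Morin-$k$ versus $\A_{k+1}$) and confirming that the sign conventions used to define $M^{2m}_\pm$ and $A^\pm_{2j+1}$ in the bundle-homomorphism framework match the standard conventions used for Morin maps in \cite{N} and \cite{DF}. Once these conventions are aligned, Theorem~\ref{thm:quine} follows immediately from \eqref{eq:main} and the computation of the Euler number of a pull-back bundle.
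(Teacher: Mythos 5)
Your proposal is correct and follows essentially the same route as the paper: apply the general formula \eqref{eq:main} to the Morin homomorphism $\phi_f=df:TM^{2m}\to\E_f=f^*TN^{2m}$ of Example~\ref{ex:Morin} and identify $\chi^{}_{\E_f}$ with $\deg(f)\chi(N^{2m})$. The paper simply asserts the latter identity, whereas you justify it by naturality of the Euler class; this is a standard supplement and does not change the argument.
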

This formula is a generalization of Quine's formula \cite{Q}
for Morin maps between $2$-manifolds (see also \cite{E}).
It should be remarked that
the numbering of Morin singularities
is different from the usual one (cf.\ Remark \ref{rmk:Morin}). 
For example, a fold (resp.\ a cusp) singularity is an $\A_2$-singular point 
(resp.\ an $\A_3$-singular point) in \eqref{eq:Quine}.
\begin{proof}[Proof of Theorem~\ref{thm:quine}]
 Let $\E$ be the pull-back of the tangent bundle $TN^{2m}$ of $N^{2m}$
 by $f$. 
 Then the map $f$ induces a bundle-homomorphism
 $\phi_f:=df:TM^{2m}\to \E$ %
 as in Example~\ref{ex:Morin}.
 Since $f$ is a Morin map, $\phi_f$ has only
 $\A_k$-points, and then the formula follows
 from \eqref{eq:main} using the fact that 
 $\chi^{}_{\E}=\deg(f)\chi(N^{2m})$.
\end{proof}

Next we give applications for immersed hypersurfaces in $\R^{2m+1}$. 
Let $M^{2m}$ be a compact oriented $2m$-manifold and
$f:M^{2m}\to \R^{2m+1}$  a wave front. 
Suppose that there exists a unit normal vector field
$\nu$ along $f$ defined on $M^{2m}$. 
Then it induces the Gauss map into the unit
$2m$-sphere
$\nu:M^{2m}\longrightarrow S^{2m}$,
and a family of wave fronts
\[
    f_t:=f+t \nu \qquad (t\in \R),
\]
each of which is called a 
{\it parallel hypersurface\/}
of $f$.
The Gauss map of $f_t$ is commonly equal to $\nu$
for all $t\in \R$.
The Gauss map $\nu$ can be considered as the limit 
$\lim_{t\to \infty} f_t/t$.

\begin{corollary}\label{thm:BW2}
 Let $M^{2m}$ be a compact oriented $2m$-manifold and
 $f:M^{2m}\longrightarrow \R^{2m+1}$
 an immersion.
 Suppose that the Gauss map
 $\nu$ is a Morin map.
 Then the singular set of $\nu$
 satisfies identity  \eqref{eq:BW} in the introduction, 
 where $M^{2m}_-$ is the set of points at which
 the Gauss-Kronecker curvature of $f$ 
 {\rm(}i.e.\ the determinant of the shape operator{\rm)}
 is negative.
\end{corollary}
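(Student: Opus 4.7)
The plan is to apply the general index formula \eqref{eq:main} to the bundle homomorphism $\varphi := d\hat\nu \colon TM^{2m} \to \hat\nu^* TS^{2m}$ induced by the Gauss map, as in Example~\ref{ex:Morin}. Since $\hat\nu$ is a Morin map by hypothesis, $\varphi$ is automatically a Morin homomorphism, so \eqref{eq:main} applies with $\E := \hat\nu^* TS^{2m}$.

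The key geometric observation is that $\E$ is isomorphic to $TM^{2m}$ as an oriented vector bundle. Indeed, each fiber $T_{\hat\nu(p)} S^{2m}$ is canonically identified with $\nu(p)^\perp \subset \R^{2m+1}$, which coincides with $df(T_pM)$, so the bundle map $df \colon TM^{2m} \to \E$ is a fiberwise linear isomorphism. The standard convention that $M$ is oriented so that $(\nu(p), df(e_1), \dots, df(e_{2m}))$ is a positive basis of $\R^{2m+1}$ whenever $(e_1, \dots, e_{2m})$ is a positive basis of $T_pM$ makes this isomorphism orientation-preserving, and hence $\chi^{}_\E = \chi(TM^{2m}) = \chi(M^{2m})$. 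Under the same identification $\varphi$ corresponds to $-A_\nu$, where $A_\nu$ is the shape operator (by the Weingarten formula $d\nu = -df \circ A_\nu$), so the $\varphi$-function in \eqref{eq:lambda} satisfies $\lambda = \det(-A_\nu) = (-1)^{2m}\det A_\nu = K$, the Gauss-Kronecker curvature. Therefore $M^{2m}_-$ is precisely the locus $\{K < 0\}$, in accordance with the statement of the corollary.

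With these identifications, \eqref{eq:main} specializes to
\[
  \chi(M^{2m}) \;=\; \chi(M^{2m}_+) - \chi(M^{2m}_-) + \sum_{j=1}^m\bigl(\chi(A_{2j+1}^+) - \chi(A_{2j+1}^-)\bigr).
\]
The singular set $\Sigma^{2m-1}$ is a closed manifold of odd dimension and so has vanishing Euler characteristic; a standard Mayer-Vietoris argument using a collar neighborhood of $\Sigma^{2m-1}$ then yields $\chi(M^{2m}) = \chi(M^{2m}_+) + \chi(M^{2m}_-)$. Subtracting this from the displayed equation produces the desired formula \eqref{eq:BW}. The only non-formal step in the whole argument is the orientation verification in the identification $\E \cong TM^{2m}$; everything else is an immediate consequence of the general theorem \eqref{eq:main} already proved above.
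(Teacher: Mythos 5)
Your proof is correct and follows essentially the same route as the paper: apply the general index formula to $d\hat\nu$, identify the Euler number of the target bundle with $\chi(M^{2m})$, check via the Weingarten formula that the $\phi$-function is the Gauss--Kronecker curvature, and subtract $\chi(M^{2m})=\chi(M^{2m}_+)+\chi(M^{2m}_-)$. The only cosmetic difference is that the paper reaches $\chi^{}_{\E}=\chi(M^{2m})$ by combining Theorem~\ref{thm:quine} (giving $\chi^{}_{\E}=2\deg\hat\nu$) with the classical fact $2\deg\hat\nu=\chi(M^{2m})$ for immersed hypersurfaces, whereas you inline the proof of that classical fact by exhibiting the orientation-preserving isomorphism $df\colon TM^{2m}\to\hat\nu^*TS^{2m}$ directly.
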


This formula is a generalization 
of the Bleeker-Wilson formula for Gauss maps 
of immersed surfaces in $\R^{3}$.

\begin{proof}[Proof of Corollary~\ref{thm:BW2}]
 We apply formula
 \eqref{eq:Quine} for the Gauss map $\nu$ of
 the immersion $f$.
 Then we have that
 \[
   2(\deg \nu) =\chi(M^{2m}_+)-\chi(M^{2m}_-)
     +
    \sum_{j=1}^m \biggl( \chi(\AA_{2j+1}^+)-
         \chi(\AA_{2j+1}^-)\biggr).
 \]
 Since $f$ is an immersion, it is well-known that
 $2(\deg \nu)$ is equal to $\chi(M^{2m})$.
 
 Next, we show that $M^{2m}_+$ (resp.\ $M^{2m}_-$)
 coincides with the set where the Gauss-Kronecker 
 curvature is positive (resp.\ negative):
 Let $ds^2$ be the induced Riemannian metric
 on $M^{2m}$ by the immersion $f$,
 and let $\vect e_1,\cdots,\vect e_{2m}$ be 
 an oriented local orthonormal
 frame field on $M^{2m}$ such that
 \[
    d\nu(\vect e_j)=-\mu_j df(\vect e_j)\qquad (j=1,\dots,2m),
 \]
 that is, $\vect e_1$,\dots, $\vect e_{2m}$ are
 eigenvector fields of the shape operator of $f$,
 and $\mu_1$, \dots, $\mu_{2m}$ are principal curvatures.
 Then we have that
 \begin{equation}\label{eq:GK}
  \lambda:=\det\bigl(
         d\nu(\vect e_1),\dots,
	 d\nu(\vect e_{2m}),\nu\bigr) 
          =\prod_{j=1}^{2m} \mu_j  =K,
 \end{equation}
 where $K:=\mu_1\cdots \mu_{2m}$
 is the Gauss-Kronecker curvature of $f$.
 This $\lambda$ is positive (resp.\ negative) 
 if and only if  $K>0$ (resp.\ $K<0$), which proves the assertion.
\end{proof}

Next, we show the following.

\begin{theorem}
\label{thm:A}
 Let $M^{2m}$ be a compact oriented $2m$-manifold  and
 $f:M^{2m}\to \R^{2m+1}$  a wave front.
 Suppose that $f$  admits only $\A_{k}$-front singularities
 {\rm(}$2\leq k \leq 2m+1${\rm)},
 as defined in
 Definition \ref{def:front}.
 Then the singular set of $f$ satisfies the identity
 \begin{equation}\label{eq:main2}
   2\deg(\nu) 
    =\chi(M^{2m}_+)-\chi(M^{2m}_-)+
      \sum_{j=1}^m \biggl( \chi(\AA_{2j+1}^+)-\chi(\AA_{2j+1}^-)\biggr),
 \end{equation}
 where $\deg(\nu)$ is the degree of the Gauss map 
 $\nu\colon{}M^{2m}\to S^{2m}$
 induced by $f$, and $\chi(M^{2m}_+)$ 
 {\rm(}resp.\ $\chi(M^{2m}_-)${\rm)} is the Euler characteristic
 of the subset $M^{2m}_+$ {\rm(}resp.\ $M^{2m}_-${\rm)} 
 of $M^{2m}$ at which
 \[
     \lambda:=\det(f_{x_1},\cdots,f_{x_{2m}},\nu)
 \]
 is positive {\rm(}resp.\ negative{\rm)}
 for an oriented local coordinate system $(x_1,\dots,x_{2m})$,
 where $f_{x_j}=\partial f/\partial x_j$.
\end{theorem}

This formula is independent
of the index formula for the Gauss map $\nu$ 
(cf.\ Theorem~\ref{thm:quine}).
In fact, the singular set of $f$ does  not
coincide with that of its Gauss map in general.

\begin{proof}[Proof of Theorem \ref{thm:A}]
 We apply \eqref{eq:main} 
 for the bundle homomorphism 
 \[
   \varphi_f:=df:TM^{2m}\longrightarrow \E_{f}
 \]
 as in Example~\ref{ex:wave}. 
 Then it is sufficient to show that $\chi_{\E_f}$ is 
 equal to $2\deg(\nu)$.
 Let $\xi$ be a vector field on the unit $2m$-sphere
 $S^{2m}$.
 By parallel transport, $\xi_q$ ($q\in S^{2m}$)
 can be considered as a vector in $\E_p$ for $p\in \nu^{-1}(q)$.
 Thus, $\xi$ induces a 
 section $\tilde \xi$ of $\E$ defined on $M^{2m}$. 
 Then the equalities
 \[
   \chi^{}_{\E_f}=\sum_{p\in M^{2m}}\ind_p(\tilde \xi)
      =\deg(\nu)\sum_{q\in S^{2m}}\ind_q(\xi)
      =\deg(\nu)\chi(S^{2m})=2\deg(\nu)
 \]
 hold, which proves the identity.
\end{proof}

Next, we give an application to
parallel hypersurfaces of strictly convex 
hypersurfaces. 
\begin{theorem}\label{thm:BW}
 Let $S^{2m}$ be the unit $2m$-sphere,
 and let
$
    f:S^{2m}\longrightarrow \R^{2m+1}
$
 be a strictly convex immersion, that is,
 the Gauss map $\nu\colon{}S^{2m}\to S^{2m}$ is a diffeomorphism.
 Let $t\in \R$ be a value such that
 the parallel hypersurface
 \[
   f_t:S^{2m}\longrightarrow \R^{2m+1}
 \]
 has only $\A_k$-singularities
 $(k=2,\cdots,2m+1)$.
 Then the singular set of $f_t$
 satisfies \eqref{eq:BW}
 and 
 $1/K_t$ can be extended as a $C^\infty$-function on
$S^{2m}$ and gives 
an  oriented $\phi_t$-function for $\phi_t=df_t$
 {\rm(}cf.\ Definition~\ref{def:phi-function}{\rm)}, where
 $K_t$ is the Gauss-Kronecker curvature of $f_t$.
\end{theorem}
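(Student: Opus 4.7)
The plan is to apply the formula \eqref{eq:main} (in its specialisation \eqref{eq:BW}) to the first homomorphism
$\varphi_t:=df_t\colon TS^{2m}\to \E_{f_t}$
of the wave front $f_t$, as defined in Example~\ref{ex:wave}. The hypothesis that $f_t$ has only $\A_k$-singularities for $k\le 2m+1$ guarantees that $\varphi_t$ is a Morin homomorphism, so \eqref{eq:main} applies.

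The key identification is $\E_{f_t}\cong \hat\nu^{*}TS^{2m}$ as oriented vector bundles. Indeed, parallel hypersurfaces share the common unit normal $\hat\nu$, so the underlying bundle $\E_{f_t}$ is independent of $t$; its fibre at $p$ is the hyperplane $\hat\nu(p)^{\perp}\subset\R^{2m+1}$, which is canonically identified with $T_{\hat\nu(p)}S^{2m}$, and the orientations induced by $\hat\nu$ on the two sides agree. Since $f$ is strictly convex, $\hat\nu\colon S^{2m}\to S^{2m}$ is an orientation-preserving diffeomorphism, so $\chi^{}_{\E_{f_t}}=\deg(\hat\nu)\,\chi(S^{2m})=2=\chi(S^{2m})$. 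As $\Sigma^{2m-1}$ is odd-dimensional, $\chi(\Sigma^{2m-1})=0$ yields $\chi(S^{2m}_{+})+\chi(S^{2m}_{-})=\chi(S^{2m})=2$, and substituting this into \eqref{eq:main} makes the $\chi(S^{2m}_{+})$ contributions cancel, producing \eqref{eq:BW} with $M^{2m}_{-}=S^{2m}_{-}$. This mimics exactly the derivation of \eqref{eq:BW} from \eqref{eq:main} sketched in the introduction.

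For the claim that $1/K_t$ is a $\varphi_t$-function, I would compute the canonical $\varphi_t$-function $\lambda_t$ of Example~\ref{ex:wave} using a local oriented orthonormal frame $e_1,\dots,e_{2m}$ diagonalising the shape operator of $f$ with principal curvatures $\mu_1,\dots,\mu_{2m}$. The identity $d\hat\nu(e_i)=-\mu_i\,df(e_i)$ gives $df_t(e_i)=(1-t\mu_i)\,df(e_i)$, hence $\lambda_t=\lambda_0\prod_{i}(1-t\mu_i)$, while the principal curvatures of $f_t$ are $\mu_i/(1-t\mu_i)$, so $K_t=K_0\big/\prod_{i}(1-t\mu_i)$. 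Consequently $1/K_t=\lambda_t/(K_0\lambda_0)$; because $K_0\lambda_0$ is a smooth positive function on $S^{2m}$ (by strict convexity and the immersion property, with outward unit normal), one has $1/K_t=e^{\sigma}\lambda_t$ with $\sigma:=-\log(K_0\lambda_0)$, which is the defining relation \eqref{eq:phi-function} of a $\varphi_t$-function. The main delicate point throughout is orientation bookkeeping: choosing the outward $\hat\nu$ is what simultaneously makes $\deg\hat\nu=+1$, matches the two candidate orientations on $\E_{f_t}$, and ensures $\lambda_0,K_0>0$; once this convention is fixed, everything is a routine consequence of the already-established formula \eqref{eq:main}.
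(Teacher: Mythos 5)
Your proposal is correct and follows essentially the same route as the paper: apply \eqref{eq:main} to $\varphi_t=df_t$, use $\chi^{}_{\E_{f_t}}=2\deg(\hat\nu)=\chi(S^{2m})=\chi(S^{2m}_+)+\chi(S^{2m}_-)$ to cancel the $\chi(S^{2m}_+)$ terms, and then compute $\lambda_t$ and $K_t$ in a principal frame. The only cosmetic difference is that the paper works with a frame orthonormal for the metric pulled back by $\hat\nu$, which yields $\lambda_t=1/K_t$ on the nose, whereas your first-fundamental-form frame produces the harmless conformal factor $e^{\sigma}=1/(K_0\lambda_0)$, which the definition of a $\phi_t$-function permits.
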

The corresponding assertion for a convex surface $f:S^{2}\to \R^{3}$
is given by Martinez-Maure \cite{MM}
under the generic assumption that
the Gaussian curvature is unbounded at the singular set of $f_t$,
and proved in \cite{SUY6} for the general case.
The above formula is a generalization of it.
\begin{proof}[Proof of Theorem~\ref{thm:BW}]
 We apply Theorem \ref{thm:A} for the
 bundle homomorphism 
 $\phi_t=df_t:TS^{2m}\to \E_{f_t}$.
 Since $f$ is convex, 
 the Gauss map $\nu\colon{}S^{2m}\to S^{2m}$
 is of degree one.
 Since $f=f_0$ is an immersion, and the Gauss map
 $\nu$ is common in the parallel 
family $\{f_t\}_{t\in \R}$,
 we have that
 \begin{align*}
   \chi(M_+^{2m})+\chi(M_-^{2m})&=\chi(S^{2m})
    =2\deg(\nu)\\
     &=\chi(M_+^{2m})-\chi(M_-^{2m})
      +
      \sum_{j=1}^m 
       \biggl(
         \chi(\AA_{2j+1}^+)-\chi(\AA_{2j+1}^-)
       \biggr),
 \end{align*}
 where $M^{2m}_{+}=S^{2m}_+$ (resp.\ $M^{2m}_{-}=S^{2m}_-$)
 is the set where $\lambda_t>0$
(resp.\ $\lambda_t<0$).
Here, $\lambda=\lambda_t$ 
 is the function as in the statement of 
 Theorem \ref{thm:A}.
 Moreover, since $\nu$ is an immersion,
 one can take the Riemannian metric $d\sigma^2$
 on $S^{2m}$ as the pull-back of the canonical 
 metric of $S^{2m}$ by $\nu$,
 and let $\{\vect{e}_1,\dots,\vect{e}_{2m}\}$ be 
 an oriented local orthonormal
 frame field on $S^{2m}$ with respect to $d\sigma^2$ such that
 \[
    df(\vect e_j)=-(1/\mu_j) d\nu(\vect e_j)\qquad (j=1,\dots,2m),
 \]
 that is, $\vect e_1$,\dots, $\vect e_{2m}$ are
 eigenvector fields of the shape operator of $f$.
 Since
 \[
    df_t(\vect{e}_j)= df(\vect{e}_j) + td\nu(\vect{e}_j)
                    = -\left(\frac{1}{\mu_j}-t\right)d\nu(\vect{e}_j),
 \]
 the Gauss-Kronecker curvature $K_t$ of $f_t$ is expressed as
 \[
    K_t  =\left(
              \prod_{j=1}^{2m}\left(\frac{1}{\mu_j}-t\right)
          \right)^{-1}.
 \]
 On the other hand,
 \begin{align*}
    \lambda_t&:=
    \det \bigl(df_t(\vect{e}_1),\dots,
               df_t(\vect{e}_{2m}),\nu
            \bigr)\\
      &= \left(
              \prod_{j=1}^{2m}\left(\frac{1}{\mu_j}-t\right)
          \right)
           \det \bigl(d\nu(\vect{e}_1),\dots,
               d\nu(\vect{e}_{2m}),\nu
            \bigr)\\
&=\frac{1}{K_t}
           \det \bigl(d\nu(\vect{e}_1),\dots,
               d\nu(\vect{e}_{2m}),\nu
            \bigr)=K\left(\frac{1}{K_t}\right),
 \end{align*}
 which implies that $1/K_t$ is an 
oriented $\varphi_t$-function,
since
$
K$ is positive because of the convexity of $f$,
 where $\varphi_t=df_t$.
\end{proof}

Now we consider the singularities of vector fields on $M^{2m}$.
Let $D$ be an arbitrary linear connection on $M^{2m}$
and $X$ a vector field defined on $M^{2m}$.
One can apply \eqref{eq:main}
for the bundle homomorphism
\[
   \phi_X:TM^{2m}\ni v \longmapsto D_vX\in TM^{2m}
\]
if $\phi_X$ admits only $\A_k$-singularities
and get \eqref{eq:BW}, 
where
$M^{2m}_+$ is the set of points where
\[
   (D_{v_1}X,\dots,D_{v_{2m}}X)
\]
forms a positive frame
for a given locally defined
positive frame $v_1,\dots,v_{2m}$ on $T_pM^{2m}$.
In \cite{SUY5}, this map was introduced
on a Riemannian $2$-manifold,
and  we called the singular points 
of $\phi_X$ the {\em irrotational points\/} there.
However, it would be better to call them
the {\it $\A_k$-singular points of the vector field\/} 
with respect to the connection $D$.
In fact, the singular set of 
$\phi_X$ has no relation 
with the rotations
of the vector fields. 

At the end of this section, we give an application 
for the Blaschke normal maps for strictly convex hypersurfaces:
we fix a strictly convex immersion
\[
   f:S^{2m}\longrightarrow \R^{2m+1}.
\]
Then there exists a unique vector field $\xi$ along $f$
satisfying the following two properties, which is called
the {\it affine normal vector field}:
\begin{enumerate}
 \item the linear map
       \[
         S:TS^{2m}\ni v \longmapsto D_v\xi
       \]
       gives an endomorphism on $TS^{2m}$, that is,
       $S(v):=D_v\xi$ is tangent to 
       $f(S^{2m})$ for each $v$, where
       $D$ is the canonical affine connection on $\R^{2m+1}$,
 \item there exists a unique covariant symmetric tensor $h$ 
       such that 
       \[
           D_X df(Y)-h(X,Y)\xi
       \]
       gives a tangential vector field on $f(S^{2m})$ for 
       any vector fields
       $X$ and $Y$ on $S^{2m}$. 
       Since $f$ is strictly convex, 
       $h$ is positive definite.
       Then the $2m$-form $\Omega$ defined by
       \[
        \Omega(X_1,\dots,X_{2m}):=
          \det\bigl(df(X_1),\dots,df(X_{2m}),\xi\bigr)
       \]
       coincides with the volume element associated to $h$,
       where $X_1$,\dots, $X_{2m}$ are vector fields on 
       $S^{2m}$ and 
       ``$\det$'' denotes the canonical volume form of  
       $\R^{2m+1}$.
\end{enumerate}
The vector field $\xi$ induces a map
\begin{equation}\label{eq:blaschke}
    \xi:S^{2m}\ni p\longmapsto \xi_p\in \R^{2m+1},
\end{equation}
which is called the {\it Blaschke normal map\/} of $f$.
The following assertion holds as in the case of $m=1$
(cf.\ \cite[Lemma 3.1]{SUY5}).

\begin{lemma}
 The Blaschke normal map $\xi$ gives a wave front.
\end{lemma}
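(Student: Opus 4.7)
The plan is to exhibit a globally defined unit normal vector field along $\hat\xi$ and verify that, together with $\hat\xi$, it defines an immersion into $\R^{2m+1}\times S^{2m}$; this is the notion of (co-orientable) wave front being used in Example~\ref{ex:wave}. The natural candidate is the Euclidean unit normal $\nu$ of the original strictly convex immersion $f$, so the proof is essentially the verification that this $\nu$ works.

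First I would check orthogonality. Differentiating the Blaschke normal map and applying property (1) of the affine normal gives
\[
    d\hat\xi(X) = D_X\xi = df(S(X)),
\]
where $S$ is the affine shape operator. In particular $d\hat\xi(X)$ lies in the tangent plane $df(T_p S^{2m})$ of the original hypersurface at $p$, so it is Euclidean-orthogonal to $\nu_p$. Hence $\nu$ restricts to a smooth unit normal vector field along $\hat\xi$.

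The main (and only substantive) step is showing that
\[
    (\hat\xi,\nu)\colon S^{2m}\longrightarrow \R^{2m+1}\times S^{2m}
\]
is an immersion. Suppose at some $p\in S^{2m}$ we have $d\hat\xi(X_p)=0$ and $d\nu(X_p)=0$ for some $X_p\in T_pS^{2m}$. Because $f$ is strictly convex, its Euclidean shape operator $A_f$, characterized by $d\nu=-df\circ A_f$, is (up to sign) positive definite, hence invertible at every point. From $d\nu(X_p)=-df(A_f(X_p))=0$ and the fact that $f$ itself is an immersion, we obtain $A_f(X_p)=0$, and invertibility of $A_f$ forces $X_p=0$. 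Thus $d(\hat\xi,\nu)$ has rank $2m$ at every point, which shows $\hat\xi$ is a co-orientable wave front with unit normal $\nu$.

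The only potential obstacle is a subtlety about what singularities of $\hat\xi$ look like, but the argument above handles singular points of $\hat\xi$ uniformly with regular ones: the immersion property of $(\hat\xi,\nu)$ is extracted entirely from the non-degeneracy of $d\nu$, which is guaranteed by strict convexity of $f$ and has nothing to do with whether $S$ is invertible. This is precisely parallel to the surface case \cite[Lemme~3.1]{SUY5}.
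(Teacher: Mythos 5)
Your argument is correct, and it reaches the conclusion by a slightly different route than the paper. Structurally both proofs do the same thing: produce a (co)normal along $\hat\xi$, verify isotropy of the lift from property (1) of the affine normal (namely $d\hat\xi(X)=D_X\xi\in df(T_pS^{2m})$), and then show the lift is an immersion by showing that the \emph{normal component alone} is already an immersion, so that the possible degeneracy of $d\hat\xi$ is irrelevant. The difference lies in which normal is used and which bilinear form certifies nondegeneracy. The paper lifts by the affine conormal $\nu$ (normalized by $\nu(\hat\xi)=1$, $\nu(df(TS^{2m}))=0$) and computes, using property (2) of the affine normal, that $\nu_{x_i}(f_{x_j})=h(\partial/\partial x_i,\partial/\partial x_j)$; positive definiteness of the affine metric $h$ then gives linear independence of $\nu,\nu_{x_1},\dots,\nu_{x_{2m}}$ and hence a Legendrian immersion into $P(T^*\R^{2m+1})$. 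You instead lift by the Euclidean unit normal and use $d\nu=-df\circ A_f$ together with invertibility of the Euclidean shape operator $A_f$ (immediate from the paper's definition of strict convexity, that the Gauss map is a diffeomorphism). Since both covectors annihilate $df(T_pS^{2m})$, the two lifts agree after projectivization, so the statements proved are the same. Your version is more elementary in that it never invokes property (2) of the affine normal or the affine metric $h$; the paper's version has the advantage of exhibiting $h$ as the natural first fundamental data of the front $\hat\xi$, which is more in keeping with the affine-geometric setting of Theorem~\ref{thm:B}. One cosmetic point: invertibility of $A_f$ is all you need, so the parenthetical claim that $A_f$ is ``(up to sign) positive definite'' can be weakened to what the hypothesis literally gives.
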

\begin{proof}
 Consider a non-zero section
 \[
    L:S^{2m}\ni p \longmapsto (\xi_p,\nu_p)\in T^*\R^{2m+1}
           =\R^{2m+1}\times (\R^{2m+1})^*,
 \]
 where $(\R^{2m+1})^*$ is the dual vector space of $\R^{2m+1}$,
 and $\nu:S^{2m}\to (\R^{2m+1})^*$ is the map
 defined by
 \[
    \nu_p(\xi_p)=1,\qquad \nu_p(df(T_pS^{2m}))=\{0\}
         \qquad (p\in S^{2m}),
 \]
 which is called the {\it conormal map\/} of $f$.
 By definition, $L$ induces an isotropic map
 of $S^{2m}$ into the projective cotangent
 bundle $P(T^*\R^{2m+1})=\R^{2m+1}\times P^*(\R^{2m+1})$
 with the canonical contact structure.
 Take a local coordinate system $(x_1,\dots,x_{2m})$
 of $S^{2m}$.
 Then we have that
 \begin{align*}
  \nu_{x_i}(f_{x_j})
    &=(D_{\partial/\partial x_i}\nu)(f_{x_j})
     =\frac{\partial}{\partial x_i}\nu(f_{x_j})
         -\nu(D_{\partial/\partial x_i}f_{x_j}) \\
    &=-\nu(D_{\partial/\partial x_i}f_{x_j})\\
    &=
      -\nu\left(D_{\partial/\partial x_i}f_{x_j}
      -h\left(\frac{\partial}{\partial x_i},
      \frac{\partial}{\partial x_j}\right)\right)+
      h\left(\frac{\partial}{\partial x_i},
      \frac{\partial}{\partial x_j}\right)\\
    &=
      h\left(\frac{\partial}{\partial x_i},
      \frac{\partial}{\partial x_j}\right)
      \qquad (i,j=1,\dots,2m).
 \end{align*}
 Since $h$ is positive definite, one can show that
 $\nu_{x_1},\dots,\nu_{x_{2m}}$ are linearly independent.
 Moreover, since $\nu(T_pS^{2m})=\{0\}$ for each $p\in S^{2m}$,
 $\nu,\nu_{x_1},\dots,\nu_{x_{2m}}$ are linearly independent.
 In particular, the map $L$ induces a Legendrian immersion,
 which proves the assertion.
\end{proof}

The following assertion is
a hypersurface version of \cite[Theorem 3.2]{SUY5}.

\begin{theorem}
\label{thm:B}
 Let $S^{2m}$ be the $2am$-sphere
 and $f:S^{2m}\to \R^{2m+1}$ a strictly convex immersion.
 Suppose that the Blaschke normal map 
 $\xi:S^{2m}\to\R^{2m+1}$ 
 {\rm(}cf.\ \eqref{eq:blaschke}{\rm)}
 admits only $\A_k$-front singularities
 for $2\leq k\leq 2m+1$.
 Then the singular set of $\xi$ satisfies 
 \eqref{eq:BW}, where $M^{2m}_+(=S^{2m}_+)$
 {\rm(}resp.\ $M^{2m}_-(=S^{2m}_-)${\rm)}
 is the subset of $S^{2m}$ at which the determinant
 of the affine shape operator 
 {\rm(}called the affine Gauss-Kronecker curvature{\rm)}
 is positive {\rm(}resp.\ negative{\rm)}, and $\AA^+_{2j+1}$
 {\rm(}resp.\ $\AA^-_{2j+1}${\rm)}
 is the set of positive {\rm(}resp.\ negative{\rm)}
 $\A_{2j+1}$-front singular points of $\xi$
 for each $j=1,\dots,m$. 
\end{theorem}

\begin{proof}
 If the singular points of $\xi$ consist only of $\A_k$-points
 ($2\leq k\leq 2m+1$),
 the affine shape operator
 \[
    S\colon{}TS^{2m}\ni v\longmapsto D_v\xi\in f^*T\R^{2m+1}
 \]
 gives a Morin homomorphism.
 Applying \eqref{eq:main} for $S$, we get Theorem \ref{thm:B}.
\end{proof}

Finally, we give an example which illustrates Theorem \ref{thm:B}:
Consider a plane curve
\[
   \gamma(t)=(1-2\epsilon \sin t)\pmt{\sin t \\ \cos t}
            \qquad \left(-\frac{\pi}2\le t\le \frac{\pi}2\right),
\]
which lies on the upper-half plane and gives a
convex curve if $0\le \epsilon<1/4$.
Rotating it around the horizontal axis, we get a
rotationally symmetric strictly convex surface in $\R^3$.
The left hand side of Figure \ref{fig:normal} indicates
the curve $\gamma$ for $\epsilon=17/80$, and the
right hand side of Figure \ref{fig:normal} gives
the profile curve of the Blaschke normal map $\xi$ 
of the surface for $\epsilon=17/80$.
As shown in Figure \ref{fig:normal} (right), 
$\xi$ has no swallowtails (i.e. it has no
$A_3$-points), and our formula implies that
the Euler number $\chi(M_-^{2})$ vanishes. 
In fact, the set $\xi(M_-^{2})$ gives a cylindrical strip
if one rotates the profile curve of $\xi$ around the
horizontal axis.

\begin{figure}
 \begin{center}
  \includegraphics[width=4.5cm]{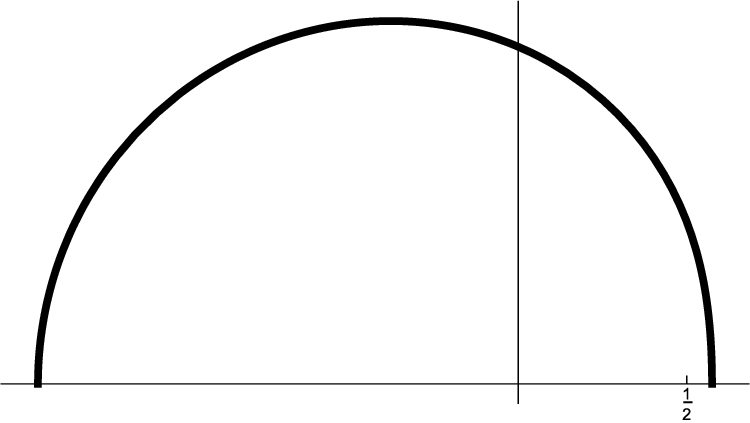}
  \hspace{2em}
  \includegraphics[width=4.5cm]{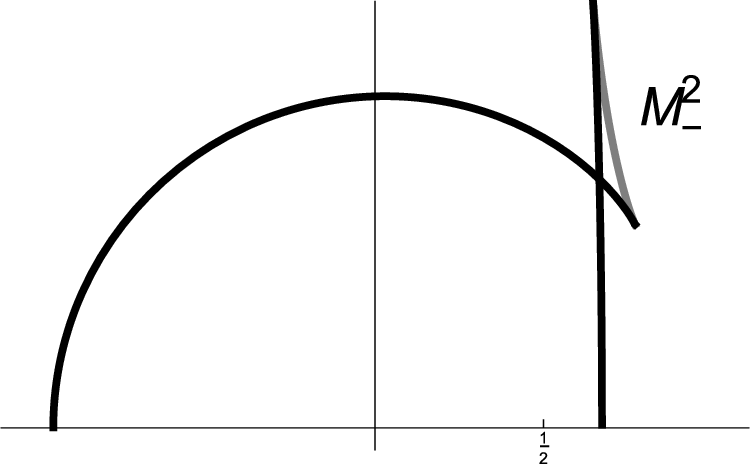}
 \end{center}
 \caption{%
   The curve $\gamma$ (left) and 
   the profile curve of $\xi$ (right).
 }\label{fig:normal}
\end{figure}

\section{Coherent tangent bundles induced by 
Kossowski metrics}\label{sec:Kossowski}

In this section, we introduce a class of positive semi-definite metrics 
called Kossowski metrics
describing the properties of wave fronts 
 intrinsically.
This class of metrics was defined by Kossowski \cite{K}
for $2$-dimensional manifolds.
In \cite{HHNSUY}, it was shown that
each Kossowski metric induces
a coherent tangent bundle,
and the formulas \eqref{eq:B} and 
\eqref{eq:A} for the metric were proved.
Our purpose is
to 
generalize the results of \cite{HHNSUY}
to higher dimensional cases,
that is,
we will give an application of
the formula \eqref{eq:main} for Kossowski metrics.

We now fix  an $n$-manifold $M^n$,
and a positive semi-definite metric $ds^2$
on $M^n$.  A point $p\in M^n$ is called a 
{\it singular point\/} of 
the metric $ds^2$ if the metric is 
not positive definite at $p$.
We denote by $\X$ the set of smooth vector fields
on $M^n$, and by $C^{\infty}(M^n)$ the set of $\R$-valued smooth functions on $M^n$.

We set
$\inner{X}{Y}:=ds^2(X,Y)$.
Kossowski \cite{K3} defined a map
$\Gamma:\X\times \X\times \X\to C^{\infty}(M^n)$
as 
\begin{multline}\label{eq:Gamma}
\Gamma(X,Y,Z):=\frac12
 \biggl(
 X\inner{Y}{Z}+Y\inner{X}{Z}-Z\inner{X}{Y}\\
 +\inner{[X,Y]}{Z}-\inner{[X,Z]}{Y}
 -\inner{[Y,Z]}{X}
 \biggr).
\end{multline}
We call $\Gamma$ the
{\it Kossowski pseudo-connection}.
(Kossowski \cite{K3} called $\Gamma$ the dual connection
of the Levi-Civita connection on 
$M^n\setminus \Sigma^{n-1}$, where $\Sigma^{n-1}$ is the singular set of $ds^2$.)
It was introduced by Kossowski (cf.\ \cite{K3}, \cite{K2} 
and \cite{K}), and  plays an important role to show 
a realization theorem of generic singularities of
Kossowski metrics as first fundamental forms
of wave fronts in $\R^3$.  
If the metric $ds^2$ is positive definite, then  
the equality
\begin{equation}\label{eq:GN}
\Gamma(X,Y,Z)=\langle \nabla_XY,Z\rangle
\end{equation}
holds, where
$\nabla$ is the Levi-Civita connection of $ds^2$.
One can easily check the following two identities (cf.\ \cite{K})
\begin{align}\label{eq:1}
&X\langle Y,Z\rangle=\Gamma(X,Y,Z)+\Gamma(X,Z,Y),\\
\label{eq:2}
&\Gamma(X,Y,Z)-\Gamma(Y,X,Z)=\langle [X,Y],Z\rangle.
\end{align}
The equation \eqref{eq:1} corresponds to the condition 
that  $\nabla$ is a metric connection, and
the equation \eqref{eq:2} corresponds to 
the condition that $\nabla$ is torsion free.
The following assertion can be also
easily verified: 

\begin{prop}[Kossowski \cite{K3}, \cite{K}]\label{prop:K-conn}
For each $Y\in \X$  and for each $p\in M^n$,
the map
\[
     T_pM^n\times T_pM^n\ni (v_1,v_2)\longmapsto
           \Gamma(V_1,Y,V_2)(p)\in \R
\]
is a well-defined bi-linear map, 
where $V_j$ $(j=1,2)$
are vector fields of $M^n$ 
satisfying $v_j=V_j(p)$.
\end{prop}

For each $p\in M^n$, the subspace
\begin{equation}\label{eq:N}
 N_p:=\biggl\{v\in T_pM^n\,;\, ds^2(v,w)=0
\mbox{ for all $w\in T_pM^n$}
\biggr\}
\end{equation}
is called the {\it null space} at $p$.
A non-zero vector which belongs to $N_p$ is called
a {\it null vector\/} at $p$.

\begin{lemma}[Kossowski \cite{K3}, see also \cite{HHNSUY}]
\label{lem:K-conn}
Let $p$ be a singular point of $ds^2$.
Then the Kossowski pseudo-connection $\Gamma$
induces a tri-linear map
 \[
    \hat \Gamma_p:T_pM^n\times T_pM^n\times N_p
            \ni (v_1,v_2,v_3) \longmapsto
          \Gamma(V_1,V_2,V_3)(p)\in \R,
 \]
where
$V_j$ $(j=1,2,3)$ are vector fields of $M^n$
such that $v_j=V_j(p)$.
\end{lemma}

\begin{definition}\label{def:adms}
 A singular point $p$ of the metric $ds^2$
 is called 
 {\it admissible\footnote{
 The notion of
 admissibility was introduced by 
 Kossowski \cite{K}. He called it
 $d(\langle,\rangle)$-{\it flatness}.
}%
\/} 
if $\hat \Gamma_p$ in Lemma~\ref{lem:K-conn} vanishes.
If each singular point of $ds^2$
is admissible, then $ds^2$ is called
an {\it admissible metric}.
\end{definition}

\begin{definition}\label{def:frontal}
An  admissible metric $ds^2$ defined on $M^n$
is called a {\it frontal metric\footnote{
\rm It is called a discriminant transverse 
metric in \cite{K}. }\/} if for each $p\in M^n$
there exists a
local coordinate
 system $(U;x_1,\dots,x_n)$
and a $C^\infty$-function
 $\lambda$ on $U$ such that
 \begin{equation}\label{eq:lambda}
  \det(g_{ij})=\lambda^2,
 \end{equation}
 where
 $ds^2=\sum_{i,j=1}^ng_{ij}dx_idx_j$
 is a local expression of the metric $ds^2$ on $U$
and $\det(g_{ij})$ is the determinant of the $n\times n$
matrix $(g_{ij})_{i,j=1,\dots,n}$.
\end{definition}

We remark that the condition \eqref{eq:lambda}
is independent of the choice of 
local coordinate systems.
If $f:M^n\to \R^{n+1}$ is a front,
then the induced metric $ds^2(:=df\cdot df)$ 
on $M^n$  is a frontal  metric 
(cf.\ \cite[Prop. 2.11]{HHNSUY}).

\begin{definition}\label{def:a2a3}
A singular point $p$ of a given frontal metric
is called {\it non-degenerate\/} or {\it generic}
(cf.\ \cite{K}) if its exterior 
derivative $d\lambda$ does not vanish at $p$, 
where $\lambda$ is the function as in
\eqref{eq:lambda}. A frontal metric $ds^2$
is called a {\it Kossowski metric\/} if
all of the singular points of the metric are
non-degenerate.
\end{definition}

One can easily check that each singular 
point of a Kossowski metric is non-degenerate,
and the singular set 
(denoted by $\Sigma^{n-1}$)
consists of a hypersurface
of $M^n$.
Moreover, the function $\lambda$ 
changes sign across $\Sigma^{n-1}$.
In particular, 
a $C^\infty$-function $\lambda$ satisfying 
\eqref{eq:lambda} 
is uniquely determined up to
the sign.

\begin{definition}[cf.\ \cite{HHNSUY}]\label{def:adms-coord}
 Let $ds^2$ be a Kossowski metric on $M^n$.
 A local coordinate system  $(U; x_1,\dots,x_n)$ of $M^n$
 is called {\it adjusted\/} at a singular point $p\in U$ 
 if  
 \[
     \partial_n:=\partial/ \partial x_n
 \]
 belongs to $N_p$. Moreover, if 
 $(U; x_1,\dots,x_n)$ is 
 adjusted at each singular point  of $U$, 
 it is called an 
 {\it adapted local coordinate system\/} of $M^n$.
\end{definition}

Since the singular set $\Sigma^{n-1}$ 
of a Kossowski metric is a hypersurface 
in $M^n$, one can easily prove
the existence of an adapted local coordinate system
at each singular point.
We are interested in the class of Kossowski metrics
because of the following fact:

\begin{proposition}\label{prop:frontal22}
Let
$
(\E,\phi,\langle,\rangle,D)
$
be a coherent tangent bundle 
{\rm(}see the introduction{\rm)} on a manifold $M^n$.
Then the induced metric $ds^2:=\phi^*\langle,\rangle$
is a frontal metric.
Moreover, if $\phi$ admits only non-degenerate singular 
points, then $ds^2$ is a Kossowski metric on $M^n$.
\end{proposition}

\begin{proof}
 The admissibility of the metric follows from the 
 identity
 \[
  \Gamma(X,Y,Z)=\langle D_X\phi(Y),\phi(Z)\rangle
   \qquad
 (X,Y,Z\in \X).
 \]
 On the other hand, for each  $p\in M^n$,
 one can take an orthonormal frame field
 $(\vect{e}_1,\dots,\vect{e}_n)$ of $\E$ on
 a coordinate neighborhood $(U;x_1,\dots,x_n)$ of $p$.
 Let $\theta_1,\dots,\theta_n$ be 
 the dual frame field of $(\vect{e}_1,\dots,\vect{e}_n)$. 
 Then
 $\mu:=\theta_1\wedge\cdots \wedge\theta_n$
 gives an orientation of $\E$
 on $U$,
 and there exists a smooth 
 function $\lambda\in C^\infty(U)$ such that
 \[
   \phi^*\mu=\lambda dx_1\wedge\cdots \wedge dx_n.
 \]
 If we write $ds^2=\sum_{i=1}^n g_{ij}dx_idx_j$ on $U$,
 then we have that
 \begin{equation}\label{eq:lambda3}
  |\lambda| =
     \sqrt{\det(g_{ij})},
 \end{equation}
 since $\phi^*\mu$ gives a Riemannian volume element
 on $U\setminus \Sigma^{n-1}$.
 Thus $\lambda^2$ coincides with $\det(g_{ij})$, 
 which implies that $ds^2$ is a frontal metric.
 Then the final assertion follows immediately 
 by
 comparing the definitions of non-degeneracy
 of singular points for $\phi$ and for $ds^2$. 
\end{proof}

\begin{example}\label{ex:conf}
 A Riemannian $n$-manifold $(M^n,g)$ 
 ($n\ge 3$) is called {\em conformally flat\/}
 if for each point $p\in M^n$, 
 there exists a neighborhood $U(\subset M^n)$ 
 of $p$ and a $C^\infty$-function $\sigma$
 on $U$ such that $e^{2\sigma}g$ is a metric with vanishing
 sectional curvature.
 When $n\ge 4$, $(M^n,g)$ is conformally flat
 if and only if the conformal curvature tensor 
 \begin{equation}\label{eq:weyl}
  W_{ijkl}:=R_{ijkl}+
   (B_{ik}g_{jl}-B_{il}g_{jk}+B_{jl}g_{ik}-B_{jk}g_{il})
      +\frac{S_g}{n(n-1)}(g_{ik}g_{jl}-g_{il}g_{jk})
 \end{equation}
 vanishes identically on $M^n$, where
 $(x_1,\dots,x_n)$ is a local coordinate system of $M^n$, 
 \begin{equation}\label{eq:schouten}
  B:=\sum_{i,j=1}^nB_{ij}
   dx_i\otimes dx_j,
   \quad
   B_{ij}:=\frac{1}{n-2}\left(R_{ij}-\frac{S_g g_{ij}}{2(n-1)}
       \right)
 \end{equation}
 is called the {\em Schouten tensor}, 
 $g_{ij}$, $R_{ijkl}$, $R_{ij}$ are the components of 
 the metric $g$, the curvature tensor of $g$,
 and the Ricci tensor of $g$ respectively, 
 and $S_g$ denotes the scalar curvature.
 When $n=3$, $(M^3,g)$ is conformally flat
 if and only if 
 $B$ in \eqref{eq:schouten}  is a Codazzi tensor, that is, 
 $\nabla B$ is a symmetric $3$-tensor, where $\nabla$ is the Levi-Civita
 connection of $(M^3,g)$.
 (When $n\ge 4$, conformal flatness implies that
 $B$ is a Codazzi tensor because of the second Bianchi identity.)
 We denote by $(g^{ij})_{i,j=1}^n$ the
 inverse matrix of $(g_{ij})_{i,j=1}^n$, and
 set
 \begin{equation}\label{hatA}
  \check B:=\sum_{i,j,a} g^{ia}B_{aj}\,
   \frac{\partial}{\partial x_i}\otimes dx_j
 \end{equation}
 which gives a $(1,1)$-tensor of $M^n$, and
 it induces a bundle homomorphism
 \begin{equation}\label{eq:schouten2}
   \check B:T_pM^{n}\ni v \mapsto 
   \check B_p(v)\in T_pM^{n}\qquad (p\in M^n).
 \end{equation}
 Since $B$ in \eqref{eq:schouten}  is a Codazzi tensor,
 $\check B$ satisfies the torsion free
 condition \eqref{eq:torsion} 
 with respect to
 $\nabla$
 (cf.\ \cite{LUY}),
 In particular, $\check B:TM^{n}\to 
 (TM^{n},g,\nabla)$ gives 
 a structure of a
 coherent tangent bundle.
 The pull-back of the Riemannian metric $g$ by $\check B$
 is given by
 \begin{equation}\label{eq:tilde}
  \check g:=\sum_{i,j,a,b} B_{ia}B_{jb}g^{ab}\,
     dx_idx_j.
 \end{equation}
 It is a remarkable fact that $\check g$ gives a 
 new conformally flat metric on 
 $M^n\setminus \Sigma^{n-1}$ (cf.\ \cite{LUY}).
 This new metric $\check g$ is called the {\it dual 
 metric} of $g$. 
 By Proposition \ref{prop:frontal22},
 $\check g$ gives an example of a 
 frontal metric.
 The points where $\check g$ is not positive definite
 correspond exactly to the singular points 
 of
 the bundle homomorphism $\check B$.
 We call  $A_k$-points of the bundle homomorphism
 $\check B$ the {\it $A_k$-points} of the dual metric.
\end{example}

As a converse of Proposition \ref{prop:frontal22}, 
the following assertion holds.

\begin{theorem}\label{thm:kossowski}
Let $ds^2$ be a Kossowski metric on an $n$-manifold $M^n$.
Then there exists a coherent tangent bundle
$
\phi:TM^n\to (\E,\inner{}{},D)
$
such that $\phi^*\inner{}{}$ coincides with $ds^2$.
\end{theorem}

The case of $n=2$ has already been 
proved in \cite{HHNSUY}, and 
this theorem is a generalization of it.
We fix an adapted local coordinate system 
$(U;x_1,\dots,x_n)$ arbitrarily.
We now carry out the
Schmidt orthogonalization for the frame
\[
  \partial_1
       :=\frac{\partial}{\partial x_1}\,\,,\dots,\,\,
       \partial_n:=
       \frac{\partial}{\partial x_n},
\]
that is, we set
\begin{align*}
 \hat {\vect{e}}_1&
 :=\partial_1,\quad \vect{e}_1
 :=\hat {\vect{e}}_1/|\hat {\vect{e}}_1|, \\
 \hat {\vect{e}}_j&:=\partial_j-\sum_{i=1}^{j-1} 
 \inner{\partial_j}{\vect{e}_i}\vect{e}_i,
 \quad
 \vect{e}_j:=\hat{\vect{e}}_j/|\hat{\vect{e}}_j| 
 \qquad (j=2,\dots,n-1).
\end{align*}
Then $\vect{e}_1,\dots,\vect{e}_{n-1}$ are smooth 
vector fields on $U$.
Finally, we set
\begin{equation}\label{eq:schmidt}
\hat {\vect{e}}_n:=\partial_n-\sum_{i=1}^{n} 
\inner{\partial_n}{\vect{e}_i}\vect{e}_i,
\quad
\vect{e}_n:=\frac{\hat{\vect{e}}_n}
{\lambda \prod_{j=1}^{n-1}|\hat {\vect{e}}_j|}, 
\end{equation}
where $\lambda$ is a $C^\infty$-function on $U$
satisfying \eqref{eq:lambda3}.
Then the resulting vector field $\vect{e}_n$ is defined only on
$U\setminus \Sigma^{n-1}$, and
$\vect{e}_1,\dots,\vect{e}_n$
consists of an orthonormal frame 
on $U\setminus \Sigma^{n-1}$, which is called
the {\it orthonormal frame field associated to\/} 
the adapted coordinate system $(x_1,\dots,x_n)$.

We now set
\begin{equation}\label{eq:omega_ij}
 \omega_{ij}:=\sum_{k=1}^n  
  \inner{e\nabla_{\partial_k}\vect{e}_j}{\vect{e}_i}\,
    dx_k
    =\sum_{k=1}^n \Gamma(\partial_k,\vect{e}_j,\vect{e}_i)\,dx_k
    \qquad (i,j=1,\dots,n),
\end{equation}
on $U\setminus \Sigma^{n-1}$,
where $\nabla$ is the Levi-Civita connection
of the metric $ds^2$ on $M^n\setminus \Sigma^{n-1}$
and $\Gamma$ is the Kossowski pseudo-connection.

\begin{lemma}
 \label{lem:kossowski}
 Each $\omega_{ij}$
 $(i,j=1,\dots,n)$ can be extended
 to a smooth $1$-form on $U$.
\end{lemma}

\begin{proof}
If $1\le i,j\le n-1$, then $\omega_{ij}$ is
trivially a smooth $1$-form on $U$. 
So we consider the case $i=n$.
By \eqref{eq:schmidt} and
\eqref{eq:omega_ij},
it holds on $M^{n}\setminus \Sigma^{n-1}$ that
\[
  \Gamma\left(
       \partial_k,\vect{e}_j,\hat{\vect{e}}_n
        \right)
            =\lambda \omega_{nj}(\partial_k)
       \prod_{l=1}^{n-1}|\hat {\vect{e}}_l|
     \qquad (k=1,\dots,n,~j=1,\dots,n-1).
\]
Since $ds^2$ is admissible, the left hand side vanishes
on $U\cap \Sigma^{n-1}$,
there exists a smooth function
$a_{kj}\in C^\infty(U)$ such that
\[
\Gamma\left(\partial_k,\vect{e}_j,\hat{\vect{e}}_n
\right)=
\lambda a_{kj}
\prod_{l=1}^{n-1}|\hat {\vect{e}}_l|
\qquad (k=1,\dots,n,~j=1,\dots,n-1).
\]
In particular, we have that
$\omega_{nj}(\partial_k)=a_{kj}$.
We next consider the case $j=n$.
Since
\[
  \Gamma(\partial_k,\hat{\vect{e}}_n,\vect{e}_i)
          =-\Gamma(\partial_k,\vect{e}_i,\hat{\vect{e}}_n)=0
\]
on $U\cap \Sigma^{n-1}$,
one can easily see that
$\omega_{in}(\partial_k)$ can also be
extended as a $C^\infty$-function on $U$.
Finally, $\omega_{nn}$ vanishes
on $U\setminus \Sigma^{n-1}$, and 
is trivially extended on $U$.
\end{proof}

\begin{proof}[Proof of Theorem \ref{thm:kossowski}]
 Let  $\{(U_a;x^a_1,\dots,x^a_n)
\}_{a\in \Lambda}$
be an atlas of $M^n$ consisting of
local adapted coordinate systems.
Since $ds^2$ is a Kossowski metric,
there exists a $C^\infty$-function 
$\lambda_a$ on $U_a$ ($a\in \Lambda$)
 such that
 \[
     \det(g^a_{ij})=(\lambda_a)^2,
 \]
 where $ds^2=\sum_{i,j=1}^n g^a_{ij}
dx_i^a dx_j^a$.

 We fix two indices $a,b\in \Lambda$
 such that $U_a\cap U_b\ne \emptyset$, and
 set
 \[
    (U;x_1,\dots,x_n):=(U_a;x^a_1,\dots,x^a_n)
 ,\qquad
    (V;y_1,\dots,y_n):=(U_b;x^b_1,\dots,x^b_n)
 \]
 for the sake of simplicity.
 We denote by
 $\vect{e}_1,\dots,\vect{e}_n$
 and
 $\tilde{\vect{e}}_1,\dots,\tilde {\vect{e}}_n$
 the orthonormal frame fields associated to 
 the adapted coordinate systems $(x_1,\dots,x_n)$
 and $(y_1,\dots,y_n)$, respectively.
 By the previous procedure of
 orthogonalization, there are 
 upper triangular
 matrices $\T$ an $\tilde {\T}$ 
 such that
 \begin{align}\label{eq:T10}
  (\vect{e}_1,\dots,\vect{e}_n)
    &=
     \left(
         \frac{\partial}{\partial x_1},
          \dots,  \frac{\partial}{\partial x_n}
         \right)
    \T, \\
    \label{eq:T20}
         (\tilde{\vect{e}}_1,\dots,\tilde{\vect{e}}_n)
      &=
   \left(
     \frac{\partial}{\partial y_1},
         \dots,  \frac{\partial}{\partial y_n}
         \right)
    \tilde{\T}.
 \end{align}
 These two matrices $\T$
 and $\tilde{\T}$ can be written as
 \begin{align}
  \label{eq:T1}
  \T &=\pmt{* & * \\
  \mb 0 & d  },\\
  \label{eq:T2}
  \tilde{\T}
  &=\pmt{* & * \\
  \mb 0 & \tilde d  },
 \end{align}
 where $*$ means 
 a real valued (or a matrix valued)
 function which is smooth along $\Sigma^{n-1}$
 and 
 $\mb 0$ is the row zero vector in $\R^{n-1}$.
 On the other hand, $d$ (resp.\ $\tilde d$) means a
 \lq divergent function\rq\ which is
 not smooth along 
 $U_a\cap \Sigma^{n-1}$ but $\lambda_a d$
 (resp.\ $\lambda_b \tilde d$)
 is a $C^\infty$-function on $U_a$
 (resp.\ $U_b$). 

 Since $\T$  and $\tilde {\T}$
 are upper triangular
 matrices, one can easily check that
 \begin{equation}\label{eq:ll}
  {\lambda_a}\det(\T)={*},
  \qquad
  {\lambda_b}\det(\tilde{\T})={*}.
 \end{equation}
 On the other hand, 
 there is a matrix valued
 function $\J$
 such that the equality
 \begin{equation}\label{eq:J0}
  \left(
    \frac{\partial}{\partial y_1},
    \dots,  \frac{\partial}{\partial y_n}
	\right)
  =
  \left(
   \frac{\partial}{\partial x_1},
   \dots,  \frac{\partial}{\partial x_n}
  \right)
 \J
 \end{equation}
 holds on $U_a\cap U_b$.
 Since $(x_1,\dots,x_n)$ and
 $(y_1,\dots,y_n)$ are adapted coordinate
 systems, we can write
 \begin{equation}\label{eq:J}
 \J =\pmt{* & \mb 0 \\
                  * & *  },
 \end{equation}
 where $\vect{0}$ is the column zero vector in $\R^{n-1}$.
 By \eqref{eq:T10},
 \eqref{eq:T20} and \eqref{eq:J0},
 we have that
 \[
 (\tilde{\vect e}_1,\dots,\tilde{\vect e}_n)
 =
 ({\vect e}_1,\dots,{\vect e}_n)
 {\T}^{-1}
 {\J}
 \tilde{\T}.
 \]

 We now compute ${\T}^{-1}{\J}\tilde{\T}$
 using the relations
 \[
 *+*=*,\qquad d + *= d,\qquad
 *\times *=*,\qquad d \times *= d
 \]
 on $U_a$ and
 \[
 *+*=*,\qquad \tilde d \times *= \tilde d,
 \qquad
 *\times *=*,\qquad \tilde d \times *= \tilde d
 \]
 on $U_b$, where $\times$ means
 the usual multiplications of
 scalars and matrices.
 These relations follow from the definitions
 of divergent terms $d$ and $\tilde d$. 
 Here, $d\times *$ might not be
 divergent in general. The above convention
 $d \times *= d$ means that
 $d \times *$ can be a divergent 
 term as a possibility.
 On the other hand, if the term $d\times d$ appears, 
 then it
 is more dangerous than the divergent terms,
 since $\lambda d\times d$ is still a divergent term.
 Fortunately, such a term never appears
 in the calculation of ${\T}^{-1}{\J}\tilde{\T}$ as follows:
 the equalities \eqref{eq:T1} 
 and \eqref{eq:ll} yield that
 \[
  \T^{-1} =\lambda_a
           \pmt{d & * \\
         \mb 0 & * }.
 \]
 We now set
 \[
 \tau_{ab}^{}:=
         {\T}^{-1}{\J}\tilde{\T},
 \]
 which gives a $C^\infty$-function 
 on $U_a\cap  V_b\setminus \Sigma^{n-1}$.
 Since
 \[
  {\T}^{-1}{\J}=
     \lambda_a
      \pmt{d & * \\
         \mb 0 & * }
       \pmt{* & \mb 0 \\
           * & * }
       =
     \lambda_a
        \pmt{d & *\\
             * & *},
 \]
 we have that
 \[
   \tau_{ab}=
          \lambda_a
           \pmt{d & *\\
                  * & *} 
           \pmt{* & *\\
              \mb 0 & \tilde d}
      =
         \lambda_a
     \pmt{d & d+\tilde d\\
     * & \tilde d}
      =
     \pmt{* & *\\
     * & *},
 \]
 since $\lambda_a=\lambda_b \times *$.
 So we can conclude that
 $\tau_{ab}$ can be smoothly
 extended on $U_a\cap U_b$.
 In particular, 
 the co-cycle condition
 \begin{equation}\label{eq:cocycle}
  \tau_{ab}\tau_{bc}\tau_{ca}=\id
 \end{equation}
 holds on $U_a\cap U_b\cap U_c$,
 where $\id$ is the identity matrix.
 Thus there exists a vector bundle $\E$ 
 with inner product $\inner{~}{~}$
 whose transition
 functions are $\{\tau_{ab}\}$.
 Let
 \[
   \Omega^a:=
       \pmt{\omega^a_{11}& \dots & \omega^a_{1n}\\
          \vdots & \ddots& \vdots \\
      \omega^a_{n1}& \dots & \omega^a_{nn}}
\]
 be a matrix valued $1$-from on $U_a$ according to
 Lemma \ref{lem:kossowski}, which gives a connection form
 of the Levi-Civita connection of $ds^2$ on 
 $U_a \setminus \Sigma^{n-1}$.
 In particular, $\Omega^a$ 
 takes value in the set of skew-symmetric matrices.
 The family of
 matrix valued 1-form 
 $\{\Omega^a\}_{a\in \Lambda}$
 satisfies the identity
 \begin{equation}\label{eq:conn}
  \Omega^b=
   \tau_{ab}^{-1}(d\tau_{ab}^{})
   +\tau_{ab}^{-1}
   \Omega^a \tau_{ab}^{}
 \end{equation}
 on $U_a\cap U_b\setminus \Sigma^{n-1}$.
 Then by continuity,
 \eqref{eq:conn}
 holds on $U_a\cap U_b$.
 Thus, it induces a metric connection $D$ on $\E$.
 By the definition of $\E$, the bundle homomorphism
 \[
  \phi:TM^{n}\longrightarrow \E
 \]
 is canonically induced so that
 $\phi(\vect{e}^a_1),\dots,\phi(\vect{e}^a_n)$
 consists of an orthonormal frame of $\E$
 on $U_a \setminus \Sigma^{n-1}$,
 where
 $\vect{e}^a_1$, \dots, $\vect{e}^a_n$
 is the orthonormal frame field associated
 to $(x_1^a,\dots,x_n^a)$.
 Then the restriction of the map $\phi$
 into $M^n\setminus \Sigma^{n-1}$ gives
 a vector bundle isomorphism between
 the tangent bundle of $M^n\setminus \Sigma^{n-1}$
 and $\E|_{M^n\setminus \Sigma^{n-1}}$, 
 and  $\phi^*\inner{~}{~}=ds^2$ 
 holds on $M^n\setminus \Sigma^{n-1}$.
 Then by continuity,
 $\phi^*\inner{~}{~}=ds^2$ 
 holds on all $M^n$.
 On the other hand,
 the pull-back connection of $D$
 coincides with the Levi-Civita connection of
 $ds^2$ on $M^n\setminus \Sigma^{n-1}$.
 In particular,
 \eqref{eq:torsion} holds on $M^n\setminus \Sigma^{n-1}$.
 Then, by continuity,
 \eqref{eq:torsion} also holds on all of $M^n$.
 Thus we get a coherent tangent bundle associated to
 the Kossowski metric $ds^2$.
\end{proof}

A Kossowski metric is said to be
{\it co-orientable\/} if one can choose the chart
\[
   \{(U_a;x^a_1,\dots,x^a_n)
       \}_{a\in \Lambda} 
\]
of $M^n$
such that
\[
   \mu:=\lambda_a dx^a_1\wedge
               \dots\wedge dx^a_n
\]
gives a globally defined smooth $n$-form on $M^n$.
It can be easily checked that the co-orientability of $ds^2$
corresponds to the fact that the induced bundle $\E$
is orientable (cf.\ \cite[Prop.2.11]{HHNSUY}).
We remark that each $\lambda_a$
($a\in \Lambda$) is a $\phi$-function
of the induced coherent tangent bundle.

\begin{definition}
 A Kossowski metric $ds^2$ on $M^n$
 is called  
 a {\it Morin metric\/} if its
 induced coherent tangent bundle
 admits only $A_{k+1}$-points ($k=1,\dots,n$).
\end{definition}

Then as an application of the formula \eqref{eq:main},
we get the following assertion.

\begin{corollary}\label{cor:morin-metric}
 Let $ds^2$ be a co-orientable Morin metric defined 
 on an oriented compact manifold $M^{2m}$.
 Then the identity \eqref{eq:main} holds,
 where $\chi_\E$ is the Euler characteristic of the
 coherent tangent bundle
 $\E$ associated to $ds^2$.
\end{corollary}

This corollary is a generalization 
of \cite[Prop. 3.3]{HHNSUY}.
The following assertion is 
the spacial case of this corollary
if we set $ds^2$ to be
the dual metric of the conformally
flat metric as in Example \ref{ex:conf}.

\begin{theorem}

\label{thm:C}
 Let $(M^{2m},g)$ be a compact orientable
 conformally flat manifold whose dual
 conformally flat metric $\check g$
 admits only $\A_k$-singularities
 for $2\leq k\leq 2m+1$.
 Then the singular set of the dual conformally 
 flat metric $\check g$  satisfies \eqref{eq:BW},
 where $M^{2m}_+$ {\rm(}resp.\ $M^{2m}_-${\rm)} is
 the 
 subset of $M^{2m}$ at which the determinant
 of the Schouten tensor
 is positive {\rm(}resp.\ negative{\rm)},  and $\AA^+_{2j+1}$
 {\rm(}resp.\ $\AA^-_{2j+1}${\rm)}
 is the set of positive {\rm(}resp.\ negative{\rm)}
 $\A_{2j+1}$-points $(j=1,\dots,m)$
 of the bundle homomorphism $\check B$.
\end{theorem}

\appendix
\section{Extension of generic vector fields}

We prove the following assertion, which is needed to
prove the existence of a characteristic vector field
associated to a given Morin homomorphism:

\begin{lemma}\label{lem:app}
 Let $M^n$ be a compact manifold and
 $X$ a $C^\infty$-vector field defined on
 an open subset of $M^n$
 containing a compact subset $K$
 such that $X$ has no zeros on the boundary $\partial K$
 of $K$.
 Then there exists a $C^\infty$-vector field
 $\tilde X$ defined on $M^n$ such that
 $\tilde X$ coincides with $X$ on $K$
 and has only generic zeros on $M^n\setminus K$.
\end{lemma}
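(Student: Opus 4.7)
The proof is a transversality argument. The key observation is that a zero $p$ of a smooth vector field is generic in the sense of the paper precisely when the associated section $M^n\to TM^n$ is transverse to the zero section at $p$ (the Jacobian condition on the component functions is exactly the transversality condition). Hence the lemma is equivalent to producing a smooth section of $TM^n$ that agrees with $X$ on $K$ and is transverse to the zero section on $M^n\setminus K$.

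Step 1 (global smooth extension). Let $U$ be the open neighborhood of $K$ on which $X$ is defined. Choose a cut-off $\beta\in C^\infty(M^n,[0,1])$ with $\beta\equiv 1$ on an open neighborhood $V$ of $K$ and $\operatorname{supp}\beta\subset U$, and set $\tilde X_0:=\beta X$ on $U$, extended by $0$ on $M^n\setminus U$. Then $\tilde X_0$ is a smooth global vector field with $\tilde X_0\equiv X$ on $V$. Since $X$ has no zeros on the compact set $\partial K$, we may shrink $V$ so that $\tilde X_0=X$ has no zeros on $\overline V\setminus\operatorname{int}(K)$; this property is preserved by any sufficiently $C^0$-small perturbation.

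Step 2 (generic perturbation off $K$). Pick a smooth $\chi\colon M^n\to[0,1]$ with $\chi\equiv 0$ on a neighborhood of $K$ and $\chi\equiv 1$ outside $V$, and seek $\tilde X$ of the form $\tilde X_0+\chi Z$; then $\tilde X\equiv X$ on $K$ automatically. Cover the compact set $\{\chi\ne 0\}\cap(M^n\setminus K)$ by finitely many charts $(U_\alpha,\varphi_\alpha)$ trivializing $TM^n$, with a subordinate partition of unity $\{\psi_\alpha\}$. Process the charts one at a time: in $U_\alpha$, perturb the current section by $\chi\psi_\alpha c$ for $c\in\R^n$, read in the trivialization. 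The evaluation map $(x,c)\mapsto \tilde X_{\mathrm{current}}(x)+\chi(x)\psi_\alpha(x)c$ is a submersion onto $TM^n|_{\{\chi\psi_\alpha>0\}}$, since its derivative in $c$ is the identity whenever $\chi\psi_\alpha>0$. Parametric transversality (Sard's theorem applied to the projection of the preimage of the zero section onto the parameter space $\R^n$) then yields almost every small $c$ for which the perturbed section is transverse to the zero section on $U_\alpha$.

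The main obstacle is the bookkeeping in this iterative perturbation: after achieving transversality in chart $U_\alpha$, one must ensure subsequent perturbations in other charts do not re-create non-generic zeros on $U_\alpha$. This is handled by the fact that transversality of a section to the zero section of $TM^n$ is $C^1$-open on compact sets, so at each stage one chooses the perturbation sufficiently small in $C^1$-norm relative to the already-treated compact regions. Since the cover is finite, the process terminates in finitely many steps; the smallness of the total perturbation also preserves the property from Step 1 that the extension has no zeros on $\overline V\setminus\operatorname{int}(K)$, and in particular remains equal to $X$ on $K$.
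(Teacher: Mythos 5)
Your proof is correct and follows essentially the same strategy as the paper's: extend $X$ globally by a cutoff, exploit the absence of zeros of $X$ on a compact annulus around $\partial K$ (a property stable under small perturbation) to control the transition region, and perturb to a generic field away from $K$. The only real difference is that where the paper simply invokes the well-known density of generic vector fields in the Whitney $C^\infty$-topology and interpolates via $\rho_1\hat X+(1-\rho_1)Z_j$, you supply that density step yourself through a chart-by-chart parametric transversality (Sard) argument with an additive perturbation $\tilde X_0+\chi Z$.
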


\begin{proof}
 We may assume that $X$ is defined on a 
 neighborhood $U$ of $K$.
 Take an open subset $V$ such that
 \[
    K\subset V\subset \overline V\subset U,
 \]
 where $\overline V$ is the closure of $V$.
 Taking $U$ sufficiently close to $K$, we may assume that
 $X$ has no zeros on $U\setminus K^\circ$, where
 $K^\circ$ (possibly empty) 
 is the set of the interior points of $K$.
 Then we can take $C^\infty$-functions
 $\rho_j:M^n\to [0,1]$ $(j=1,2)$
 such that $\rho_1=1$ on $K$
 (resp.\ $\rho_2=1$ on $\overline V$)
 and $\rho_1=0$  on $M^n\setminus V$
 (resp.\ $\rho_2=0$ on $M\setminus U$).
 We set
 $\hat X:=\rho_2 X$,
 which  is a vector field on $M^n$.
 It is well-known that there exists a
 sequence of generic 
 vector fields $\{Z_j\}_{j=1,2,3,\dots}$
 on $M^n$
 converging to $\hat X$ with respect to
 the Whitney $C^\infty$-topology.
 We set
 \[
    \tilde X_j:=\rho_1 \hat X+(1-\rho_1)Z_j.
 \]
 Then
 $\tilde X_j$ coincides with $X$ on $K$,
 because $\rho_1=\rho_2=1$ on $K$.
 Since 
 $\hat X$ has no zeros on 
 the compact set $\overline V\setminus K^\circ$,
 $\tilde X_j$ has a zero at $p\in \overline V\setminus K^{\circ}$
 if $\hat X=-\frac{1-\rho_1}{\rho_1}Z_j$ holds at $p$.
 This is impossible for sufficient large $j$,
 since $Z_j\to \hat X$ as $j\to \infty$
 and $\rho_1\in [0,1]$.
 Moreover,
 $\tilde X_j$ coincides with $Z_j$
 on $M^n\setminus V$, since $\rho_1=0$ on
 the complement of $V$.
 Thus it has only generic zeros on
 $M^n\setminus V$.
 In particular, $\tilde X_j$ has 
 the desired property for sufficiently large $j$.
\end{proof}

\begin{acknowledgements}
 The authors thank Mitsutaka Murayama, Toshizumi Fukui
 and Kazuhiro Sakuma for valuable comments.
 They also thank the referee for
 careful reading and valuable comments.
\end{acknowledgements}

\end{document}